\begin{document}

\title[Local existence of solutions to the Euler--Poisson system]
{Local existence of solutions to the Euler--Poisson system,
  including densities without compact support}


\author[U. Brauer]{Uwe Brauer}

\address{%
Uwe Brauer 
  Departamento de  Matemática Aplicada\\ Universidad Complutense Madrid
28040 Madrid, Spain}
\email{oub@mat.ucm.es}

\thanks{U.~B. gratefully acknowledges support from  grants MTM2012-31298
and MTM2016-75465 from MINECO, Spain}
   
\author[L. Karp]{Lavi Karp}

\thanks{Accepted for publication in \textbf{Journal of Differential Equations}}	 

\address{%
Lavi Karp
Department of Mathematics\\ ORT Braude College\\
P.O. Box 78, 21982 Karmiel\\ Israel}

\email{karp@braude.ac.il}

\subjclass{Primary 35Q75;  Secondary 35Q76,
35J61, 35Q31}

\keywords{Euler--Poisson systems, hyperbolic symmetric systems, energy
  estimates, Makino variable, weighted fractional Sobolev spaces.
}

\begin{abstract}
  Local existence and well posedness for a class of solutions for the
  Euler Poisson system is shown. 
  These solutions have a density $\rho$ which either falls off at
  infinity or has compact support. 
  The solutions have finite mass, finite energy functional and include
  the static spherical solutions for $\gamma=\frac{6}{5}$. 
  The result is achieved by using weighted Sobolev spaces of
  fractional order and a new non linear estimate which allows to
  estimate the physical density by the regularised non linear matter
  variable.
  Gamblin also has studied this setting but using very different
  functional spaces. 
  However we believe that the functional setting we use is more
  appropriate to describe a physical isolated body and more suitable
  to study the Newtonian limit.

\end{abstract}

\maketitle{}

\section{Introduction}
\label{sec:iteration-scheme}

We consider the Euler--Poisson system
\begin{align}
  \label{eq:iteration-scheme:1}
  \partial_t\rho + v^a\partial_a\rho + \rho \partial_a v^a
  &= 0 \\
  \label{eq:iteration-scheme:2}
  \rho \left(\partial_t v^a + v^b\partial_b v^a \right) + \partial^a
  p &=  - \rho\partial^a\phi  \\
  \label{eq:iteration-scheme:3}
  \Delta\phi &= 4\pi G \rho 
\end{align}
where $G$ denotes the gravitational constant. 
Using suitable physical units we can set $G=1$. 
Here we have used the summation convention, for example,
$v^k\partial_k:=\sum_{k=1}^3v^k\partial_k$, a convention we will use in the
rest of the paper wherever it seems appropriate to us.
Moreover $\partial^a\phi:=\delta^{ab}\partial_b\phi$, and we will wherever it is
convenient to denote $ \partial^a\phi$ by $\nabla\phi$.
In this paper we consider the barotropic equation of state
\begin{equation}
  \label{eq:state}
  p=K\rho^\gamma\quad 1<\gamma,\  0<K,
\end{equation}
and we  study this system with initial data for the density which either
has compact support or falls off at infinity in an appropriate way.
It is well known that the usual symmetrization of the Euler equations
is badly behaved in such cases. 
The coefficients of the system degenerate or become unbounded when
$\rho$ approaches zero. 
It was observed by Makino
\cite{makino86:_local_exist_theor_evolut_equat_gaseous_stars} that 
this difficulty can be to some extend circumvented by using a new
matter variable $w$ in place of the density. 
For this reason, we introduce the quantity
\begin{equation}
  \label{eq:proto-euler-poisson-banach:1}
  w=\frac{2\sqrt{K \gamma}}{\gamma-1}\rho^{\frac{\gamma-1}{2}},
\end{equation}
which allows to treat the situation where $\rho=0$. 
Replacing the density $\rho$ by the Makino variable $w$, the system
(\ref{eq:iteration-scheme:1})--(\ref{eq:iteration-scheme:3}) coupled with the 
equation of state (\ref{eq:state}) takes the
following form:
\begin{align}
  \label{eq:iteration-scheme:1b}
  \partial_tw + v^a\partial_aw + \frac{\gamma-1}{2}w \partial_a v^a
  &= 0 \\
  \label{eq:iteration-scheme:2b}
  \partial_t v^a + v^b\partial_b v^a  + \frac{\gamma-1}{2}w\partial^a
  w &=  -          \partial^a\phi  \\
  \label{eq:iteration-scheme:3b}
  \Delta\phi &= 4\pi\rho 
\end{align}
which we will sometimes denote as the  Euler--Poisson--Makino system.
The Euler--Poisson system consists of a hyperbolic system of evolution
equations and the elliptic Poisson equation.

Traditionally symmetric hyperbolic systems have been solved in
Sobolev spaces $H^s$ because their norm allows in a convenient
way to obtain energy estimates.
But there are situations in which these spaces are too restrictive 
\cite{KATO,
majda84:_compr_fluid_flow_system_conser
}. One of them is the Euler-Poisson equations when the density has no compact
support. 
We therefore treat the Euler--Poisson system in a new functional
setting which involves weighted Sobolev spaces of fractional order.
These spaces have been introduced by
Triebel~\cite{triebel76:_spaces_kudrj2} and have been used successfully by the
authors~\cite{BK7,
  BK8
} to prove a similar result for the Einstein--Euler system.
In this setting, we prove local existence, uniqueness and well
posedness of classical solutions.
We mean by well posedness  in this paper the conservation of the 
initial regularity, but not the continuity of the flow map.

The benefit of these spaces is that they enable us to consider a wide
range of $\gamma$ for the equation of state $p=K\rho^{\gamma}$, and also to
construct solutions with a couple of interesting features, which we
will discuss briefly in the following.
The solutions we obtain include densities without compact support but
with finite mass and energy functional for $1<\gamma <\frac{5}{3}$. 
In particular, they include static spherical solutions with finite
mass but infinite radius if $\gamma=\frac{6}{5}$.

An essential ingredient of the proof is a new nonlinear estimate of a
power of functions in the weighted fractional Sobolev spaces,
Proposition \ref{prop:power}. 
This estimate enables us to obtain a solution of the Poisson equation
(\ref{eq:iteration-scheme:3b}) for densities without compact support
in terms of the Makino variable $w$.

The problem studied here has been treated already by Gamblin
\cite{gamblin93:_solut_euler
} and Bezard
\cite{Bezard_93
}, so we compare their results with the one obtained by us.
The main differences are the choice of the functional spaces that are
used to prove their results, and the properties of the corresponding
solutions.
Bezard uses the ordinary Sobolev spaces $H^s$, and therefore
his claim that his solutions include static spherical solutions if
$\gamma=\frac{6}{5}$ is simply not correct, since the initial data of the
corresponding Makino variable do not belong to $H^s$. 
Gamblin uses the uniformly locally Sobolev spaces $H_{ul}^s$, which
have been introduced by Kato
\cite{KATO
}. 
This type of spaces includes bounded functions as $|x|\to \infty$, and hence
Gamblin's solutions contain spherical static solutions, however the
use of these spaces is problematic in several important aspects which
we list here shortly, for details we refer to Subsection
\ref{subsection:advantages}.
First it should be noted that there is no well posedness results in
the $H^{s}_{ul}$ spaces, as it was pointed out by Majda \cite[Thm 2.1
p50]{majda84:_compr_fluid_flow_system_conser
}. 
By this we mean that for given initial data $u_0\in H^s_{ul}$, the
corresponding solution belongs only to
$C \left([0,T];H_{loc}^s \right)\cap C^1 \left([0,T];H_{loc}^{s-1}
\right)$.

Thus Gamblin's solutions face these disadvantages. 
Moreover, in his setting the density $\rho$ belongs to the Sobolev space
$W^{1,p}$ ($1\leq p <3$), while the velocity $v^a\in H^s_{ul}$, and hence the
density falls off to zero at infinity but velocity does not. 
Such behavior of the solutions disagrees with the physical
interpretations of the model of isolated bodies. 
As we mentioned above, the Sobolev spaces $H^s$ are the most
convenient spaces for quasi linear first order symmetric hyperbolic
systems. 
But there are various circumstances in which either the initial data,
or the coefficients of the system do not belong to this class, for
example the asymptotically flat spacetime in general relativity
\cite{Christodoulou_1981
}, or if the density belongs to $L^\infty$
\cite{majda84:_compr_fluid_flow_system_conser
}, and of course the Euler--Poisson for densities without compact
support \cite{gamblin93:_solut_euler
} Therefore we suggest a different approach, namely, we establish
well--posedness of quasi linear symmetric hyperbolic systems in the
weighted fractional Sobolev spaces, see subsection
\ref{sec:symm-hyperb-syst}. 
This approach suits several situations where the initial data and the
coefficients do not belong to the $H^s$ spaces, and in particular it
can be applied to coupled hyperbolic--elliptic systems such as the
Euler-Poisson.

Finally, we want to emphasize that we only consider local in time
solutions. 
Due to the complicated nonlinear character of the Euler equations, it
is expected that classical solution will break down in a finite time
interval see for example
\cite{sideris85:_format_singul_three
}. 
Global in time existence results can only be expected if the tendency
to form singularities is somehow compensated. 
For example, Guo
\cite{guo98:_smoot_euler_poiss_r
} considered a simple two-fluid model to describe the dynamics of a
plasma is the Euler–Poisson system, where the compressible electron
fluid interacts with its own electric field against a constant charged
ion background. 
This feature results in a repulsive force and enables the proof of
global existence.
Another example is the Euler--Poisson system with a cosmological
constant
\cite{ICH6
}, where initial data are considered which are small and describe
deviations of a given exponentially expanding background solution,
and which lead to classical solutions that exist globally in time.

\subsection{Structure of the proof and organization of the paper}
\label{sec:struct-proof-organ}
The most obvious way to solve system
(\ref{eq:iteration-scheme:1b})--(\ref{eq:iteration-scheme:3b}) would be
to apply some sort of iteration procedure or a fixed--point argument
directly to that system. 
But since the system is coupled to an elliptic equation, it seemed
more convenient and transparent to split up the proof in several
parts. 
Firstly we prove local existence and well posedness for a general
symmetric hyperbolic system (with $A^0=\mathds{1}$) in the weighted
Sobolev spaces.

Since the density falls off but could become zero, we will need the
established tool of regularizing the system, by introducing a new
matter variable, the Makino variable
(\ref{eq:proto-euler-poisson-banach:1}).
In this setting the power $w^{\frac{2}{\gamma-1}}$ must be estimated in the
weighted fractional norm. 
The estimates of the power in the $H^s$ spaces under certain
restrictions on the power and $s$ are known (see e.g. 
\cite{runst96:_sobol_spaces_fract_order_nemyt}). 
An essential ingredient of our proof is a nonlinear power estimate in
the weighted fractional Sobolev spaces that preserves the regularity
and improves the fall off at infinity (Proposition \ref{prop:power}). 
It enables us to apply the known estimates for the Poisson equation
(\ref{eq:iteration-scheme:3b}) in these spaces. 
We then prove the existence of solutions to the Euler--Poisson--Makino
system by using a fixed--point argument.
In any case, either for the fixed--point or for the direct iteration we
are faced with the well known fact that we have to use a
\textit{higher} and a \textit{lower} norm.
We show boundness in the higher norm and contraction in the lower. 
Under this circumstances the existence of a fixed point in the higher
norm is well known.
However, we have not found such a modified fixed--point theorem in the
literature, and that is why we have added it together with its proof
in the Appendix.

The paper is organized as follows: The next section deals firstly with
the mathematical preliminaries, namely the introduction of the
weighted spaces. 
Then we present the main results, namely the existence and well
posedness together with the main properties of the solutions obtained. 
The properties of the weighted Sobolev spaces $H_{s,\delta}$ are presented
in Section \ref{sec:more-useful-tools}. 
For the proofs of those properties, we refer to \cite{BK3,
  BK7, 
  triebel76:_spaces_kudrj2, triebel76:_spaces_kudrj}, except Lemma
\ref{lem:3}, which is new and crucial for the proof of the nonlinear
power estimate, Proposition \ref{prop:power}.
In section \ref{sec:cauchy-problem} we establish the main mathematical
tools, including the local existence and well posedness of symmetric
hyperbolic systems in the $H_{s,\delta}$ weighted spaces, two energy type
estimates of the solutions to hyperbolic systems, the elliptic estimate
for the Poisson equation and two non--linear estimates.
The last section is dedicated to the proof of the main result using a
fixed--point argument. 
In the Appendix \ref{sec:proof-modif-banach} we present and prove a
modified version of the Banach fixed--point theorem. 
   
\section{Main results}
\label{sec:main-result}
We obtain well posedness of the Euler--Poisson--Makino system
(\ref{eq:iteration-scheme:1b})--(\ref{eq:iteration-scheme:3b}) for
densities without compact support but with a polynomial decay at
infinity, and with the equation of state (\ref{eq:state}).
The class of solutions we obtain have finite mass, a finite energy
functional, and moreover, they contain the static spherical static
symmetric solutions of for the adiabatic constant $\gamma=\frac{6}{5}$ (see
Subsection \ref{sec:properties-solutions}). 
These solutions are continuously differentiable and they are also
classical solutions of the Euler--Poisson system
(\ref{eq:iteration-scheme:1})--(\ref{eq:iteration-scheme:3}).

The Euler--Poisson--Makino system is considered in the weighted Sobolev spaces
of fractional order $H_{s,\delta}$.

So we first define these spaces.
Let $\{\psi_j\}_{j=0}^\infty$ dyadic partition of unity in $\setR^3$,
that is, $ \psi_j\in C_0^\infty(\setR^3)$, $\psi_j(x)\geq0$,
$\mbox{supp}(\psi_j)\subset \{x: 2^{j-2}\leq |x| \leq 2^{j+1}\}$,
$\psi_j(x)=1$ on $\{x: 2^{j-1}\leq |x| \leq 2^{j}\}$ for $j=1,2,...$,
$\mbox{supp}(\psi_0)\subset\{x:|x|\leq 2\}$, $\psi_0(x)=1$ on
$\{x: |x|\leq 1\}$ and
\begin{equation}
  \label{eq:weighted:1}
  |\partial^\alpha  \psi_j(x)|\leq  C_\alpha  2^{-|\alpha|j},
\end{equation}
where the constant $C_\alpha$ does not depend on $j$.
We denote by $H^s$ the Sobolev spaces with the norm given by
\begin{displaymath}
  \|u\|_{H^s}^2=\int(1+|\xi|^2)^s |\hat{u}(\xi)|^2d\xi,
\end{displaymath}
where $\hat{u}$ is the Fourier transform of $u$. 
The scaling by a positive number $\epsilon$ is denoted by
$f_\varepsilon(x)=f(\varepsilon x)$.
\begin{defn}[Weighted fractional Sobolev spaces]
  \label{def:weighted:3}
  Let $s,\delta\in\setR$, the weighted Sobolev space $H_{s,\delta}$ is
  the set of all tempered distributions such that the norm
  \begin{equation}
    \label{eq:weighted:4}
    \left(\|u\|_{H_{s,\delta}}\right)^2=
    \sum_{j=0}^\infty  2^{( \frac{3}{2} + \delta)2j} \| (\psi_j
    u)_{(2^j)}\|_{H^{s}}^{2}
  \end{equation}
  is finite.
\end{defn}
The largest integer less than or equal to $s$ is denoted by $[s]$.
In this setting our main result is the following. 
\begin{thm}[Well posedness of the Euler--Poisson--Makino system]
  \label{thm:main:1}
  Let $1<\gamma < \frac{5}{3}$,
  $-\frac{3}{2}+\frac{2}{\left[\frac{2}{\gamma-1}\right]-1}\leq
  \delta<-\frac{1}{2}$,
  $ \frac{5}{2}<s$ if $\frac{2}{\gamma-1}$ is an integer and
  $\frac{5}{2}<s<\frac{5}{2}+\frac{2}{\gamma-1}-\left[\frac{2}{\gamma-1}\right]$
  otherwise. 
  Suppose $(w_0, v^a_0)\in H_{s,\delta}$ and $w_0\geq 0$, then there
  exists a positive $T$ which depends on the $H_{s,\delta}$-norm of
  the initial data and there exists and a unique solution $(w,v^a)$ of
  the Euler--Poisson--Makino system
  (\ref{eq:iteration-scheme:1b})--(\ref{eq:iteration-scheme:3b}) such
  that
  \begin{equation*}
    (w,v^a)\in C\left([0,T], H_{s,\delta}\right)\cap C^1\left([0,T], 
      H_{s-1,\delta+1}\right)
  \end{equation*}
  and $0\leq w(t,\cdot)$ in $[0,T]$.
 
\end{thm}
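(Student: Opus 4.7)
The strategy is to decouple the hyperbolic system for $(w,v^a)$ from the elliptic equation for $\phi$ via a fixed-point iteration, and then invoke the two-norm fixed-point theorem from the Appendix: one shows boundedness in the higher norm $H_{s,\delta}$ and contraction in the lower norm $H_{s-1,\delta+1}$. Writing $U=(w,v^1,v^2,v^3)^T$, the Makino equations (\ref{eq:iteration-scheme:1b})--(\ref{eq:iteration-scheme:2b}) take the symmetric hyperbolic form $\partial_t U+\sum_a A^a(U)\,\partial_a U=B(\nabla\phi)$ with $A^0=\mathds{1}$ and $A^a$ smooth in $U$; the identification of $\tfrac{\gamma-1}{2}w$ as the off-diagonal coefficient in both the continuity and momentum equations is what produces the symmetry. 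The Poisson equation closes the system through $\Delta\phi=4\pi\rho$ with $\rho=c_\gamma w^{2/(\gamma-1)}$ recovered by inverting (\ref{eq:proto-euler-poisson-banach:1}).

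Given data $(w_0,v_0^a)\in H_{s,\delta}$ with $w_0\ge 0$, I fix $R>2\|(w_0,v_0^a)\|_{H_{s,\delta}}$ and, for $T>0$ to be chosen, set
\begin{equation*}
X_T^R=\bigl\{\,U\in C([0,T],H_{s,\delta})\cap C^1([0,T],H_{s-1,\delta+1}):U(0)=(w_0,v_0^a),\ \sup_{t\le T}\|U(t)\|_{H_{s,\delta}}\le R\,\bigr\}.
\end{equation*}
The iteration map $\Phi$ is defined in three steps. First, given $U=(w,v^a)\in X_T^R$, apply Proposition \ref{prop:power} to place $\rho=c_\gamma w^{2/(\gamma-1)}$ in a weighted fractional space with a strictly larger weight index than $\delta$; the admissibility of the exponent $2/(\gamma-1)$, the upper constraint on $s$ and the lower bound $\delta\ge-\tfrac32+2/([2/(\gamma-1)]-1)$ in the hypothesis are exactly what make this step work. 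Second, invert the Poisson operator via the weighted elliptic estimate from Section \ref{sec:cauchy-problem} to obtain $\nabla\phi\in H_{s,\delta+1}$ with a bound polynomial in $\|w\|_{H_{s,\delta}}$. Third, solve the \emph{linear} symmetric hyperbolic Cauchy problem
\begin{equation*}
\partial_t\tilde U+\sum_a A^a(U)\,\partial_a\tilde U=B(\nabla\phi),\qquad\tilde U(0)=(w_0,v_0^a),
\end{equation*}
using the well-posedness for symmetric hyperbolic systems in $H_{s,\delta}$ proved in Section \ref{sec:cauchy-problem}, and set $\Phi(U)=\tilde U$.

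The higher-norm bound $\sup_t\|\tilde U\|_{H_{s,\delta}}\le R$ follows from the weighted energy estimate, applied with Moser-type control on $A^a(U)$ and on the source $B(\nabla\phi)$, both of which grow at most polynomially in $\|U\|_{H_{s,\delta}}\le R$; shrinking $T=T(R)$ closes the inequality. For contraction I take $U_1,U_2\in X_T^R$; the difference $\Phi(U_1)-\Phi(U_2)$ satisfies a symmetric hyperbolic system whose coefficients and source depend on $U_1-U_2$, on $A^a(U_1)-A^a(U_2)$, and on $\nabla(\phi_1-\phi_2)$. The Lipschitz dependence of $w\mapsto w^{2/(\gamma-1)}$ in $H_{s-1,\delta+1}$, obtained by writing
\begin{equation*}
w_1^p-w_2^p=p\int_0^1(w_2+\tau(w_1-w_2))^{p-1}\,d\tau\cdot(w_1-w_2)
\end{equation*}
and applying Proposition \ref{prop:power} to the integrand, combined with the lower-norm energy estimate, yields
\begin{equation*}
\|\Phi(U_1)-\Phi(U_2)\|_{C([0,T],H_{s-1,\delta+1})}\le C(R)\,T\,\|U_1-U_2\|_{C([0,T],H_{s-1,\delta+1})},
\end{equation*}
which is a contraction after a further reduction of $T$. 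The modified Banach theorem from the Appendix then produces a unique fixed point in $X_T^R$, which is the desired solution. Positivity $w\ge0$ propagates because (\ref{eq:iteration-scheme:1b}) is a linear transport equation in $w$ along the flow of $v^a$; by choosing the iteration to preserve non-negativity at each stage, the fixed point inherits this property.

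The main obstacle is the combined control of the nonlinear power and the coupled elliptic solve: the precise restrictions on $\gamma$, $s$, and $\delta$ in the hypothesis are calibrated so that Proposition \ref{prop:power} yields $\rho$ with the regularity \emph{and} weighted decay required for the Poisson estimate to feed $\nabla\phi$ back into the hyperbolic evolution at the functional level $H_{s,\delta+1}$. Once this is in place, the remaining ingredients—Moser inequalities and energy estimates in $H_{s,\delta}$, and the modified fixed-point theorem—are all supplied by the earlier sections and the Appendix, and the conservation of initial regularity (well-posedness in the sense understood in this paper) is read off from the class $X_T^R$.
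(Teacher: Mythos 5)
Your overall architecture is the same as the paper's: decouple via the three-step map $\Phi = \Phi_1\circ\Phi_2\circ\Phi_3$ (Makino inversion, Poisson solve, hyperbolic solve), show $\Phi$ is a self-map of a bounded set in the high norm $H_{s,\delta}$, show contraction in a weaker norm, and invoke the modified Banach fixed-point theorem from the Appendix. The propagation of $w\ge 0$ along characteristics and the role of the constraints on $\gamma,s,\delta$ in Proposition~\ref{prop:power} and the elliptic solve are all correctly identified.

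However, your choice of $H_{s-1,\delta+1}$ as the contraction norm is a genuine error, for two related reasons. First, structurally, the modified fixed-point theorem (Theorem~\ref{thr:fixpoint-banach-norms:1}) requires $X\subset Y$ with the $Y$-norm weaker; but $H_{s,\delta}\not\subset H_{s-1,\delta+1}$. By Proposition~\ref{monoton}, $\|u\|_{H_{s_1,\delta_1}}\le\|u\|_{H_{s_2,\delta_2}}$ only if $s_1\le s_2$ \emph{and} $\delta_1\le\delta_2$; the passage $\delta\to\delta+1$ goes the wrong way (more decay is demanded, not less), so a generic $w(t)\in H_{s,\delta}$ need not lie in $H_{s-1,\delta+1}$ at all. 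The $C^1([0,T],H_{s-1,\delta+1})$ regularity in the conclusion concerns $\partial_t U$ (via Proposition~\ref{embedding1}), not $U$ itself. Thus the right-hand side of your proposed contraction inequality may be infinite, and the two-norm theorem cannot be applied with this pair of spaces.

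Second, even granting a valid pair, the Lipschitz estimate for $w\mapsto w^\beta$ in a fractional weighted norm is not supplied by the tools you cite. Proposition~\ref{prop:power} estimates $\|w^\beta\|_{H_{s-1,\delta+2}}$ for a single power, not the product of $(w_2+\tau(w_1-w_2))^{\beta-1}$ with $(w_1-w_2)$, and the weight bookkeeping is delicate: the elliptic step loses a factor $(1+|x|)^{-2}$ relative to the source, and that loss must be absorbed somewhere. The paper resolves this with contraction in $L^2_\delta\simeq H_{0,\delta}$ (a genuinely weaker norm, since $H_{s,\delta}\subset H_{0,\delta}$), and a tailored difference estimate, Proposition~\ref{prop:section-0005b:1}, which bridges the weight gap by bounding $w_i^{\beta-1}$ pointwise in $L^\infty_{2/(\beta-1)}$ (via the weighted Sobolev embedding, Proposition~\ref{prop:emb-cont}(i)) and trading those pointwise decays against the $(1+|x|)^4$ factor. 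This $L^2$ mechanism does not transfer to $H_{s-1,\delta+1}$ without new work. Replacing your contraction norm by $L^2_\delta$, and replacing the "integral-form Lipschitz'' by the explicit difference estimate of Proposition~\ref{prop:section-0005b:1} together with the $L^2_\delta$ energy estimate of Lemma~\ref{lem:energy-L:1}/Corollary~\ref{cor:2}, repairs the argument and recovers the paper's proof.
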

As we have already pointed out this theorem does not include the
continuity of the flow map with respect to the initial data. 
Nevertheless, it has a series of noteworthy corollaries which
we list below:
\subsection{Properties of the solutions}
\label{sec:properties-solutions}
We start with static solutions of the Euler--Poisson system. 
Those solutions must be spherical symmetric (see for example
\cite{lichtenstein28:_eigen_gleic_teilc_geset
}) and they can be obtained by solving the Lane--Emden equation
\cite{CHA}.
The linear stability has been an open problem for a long time, so it
is interesting to see whether a class of solutions can be constructed
which include static solutions. 
To the best of our knowledge, this has not been achieved for solutions
with finite radius.
For $\gamma=\frac{6}{5}$ there is one parameter family (parameterized by
the central density) of solutions which have \textit{finite mass} but
\textit{infinite radius}, and it is given by
\begin{equation}
  \label{eq:gamblin:9}
  \rho(t,x)=\rho(|x|)=a^{\frac{5}{2}}\left(a^2+|x|^2\right)^{-\frac{5}{2}}  \sim 
 |x|^{-5},
\end{equation}
where $a$ is a positive constant
see~\cite{CHA
}. 
The corresponding solutions in the Makino variable are given by
\begin{equation}
  \label{eq:section-0003:1}
  w(x,t)=a^{\frac{1}{4}}\left(a^2+|x|^2\right)^{-\frac{1}{4}}\sim 
|x|^{-\frac{1}{2}}.
\end{equation}
Such static solutions are included in the class of solutions whose
existence is guaranteed by Theorem \ref{thm:main:1}, as it is stated
in the following corollary.
\begin{cor}[The static solutions of the Euler--Poisson system]
\label{cor:section-0003:1}
  Let $\gamma=\frac{6}{5}$, $-\frac{23}{18}<\delta<-1$ and
  $\frac{5}{2}<s$. 
  Then there exists a positive $T$ and a unique solution $(w,v^a)$ to
  the Euler--Poisson--Makino system
  (\ref{eq:iteration-scheme:1b})--(\ref{eq:iteration-scheme:3b}) such
  that
  \begin{equation*}
    (w,v^a)\in C\left([0,T], H_{s,\delta}\right)\cap C^1\left([0,T], 
      H_{s-1,\delta+1}\right),
  \end{equation*}
  and for which the initial data include the static solution
  $w_0(x)=\left(a^2+|x|^2\right)^{-\frac{1}{4}}$.
\end{cor}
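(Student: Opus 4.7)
The plan is to realize the corollary as a direct application of Theorem~\ref{thm:main:1}, so essentially all the work is a parameter check together with a verification that the candidate initial datum $w_0(x)=(a^2+|x|^2)^{-1/4}$ lies in the weighted space $H_{s,\delta}$. First, I would substitute $\gamma=\frac{6}{5}$ into the hypotheses of Theorem~\ref{thm:main:1}: this yields $\frac{2}{\gamma-1}=10$, which is an integer, so the upper bound on $s$ disappears and the restriction is simply $s>\frac{5}{2}$; likewise $[\frac{2}{\gamma-1}]-1=9$, so the lower bound for $\delta$ becomes $-\frac{3}{2}+\frac{2}{9}=-\frac{23}{18}$. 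Theorem~\ref{thm:main:1} then demands $-\frac{23}{18}\le\delta<-\frac{1}{2}$, and the stricter upper bound $\delta<-1$ in the corollary is, as I explain below, exactly what is needed to accommodate the static profile.

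Next I would choose the initial data $(w_0,v^a_0)=\bigl((a^2+|x|^2)^{-1/4},0\bigr)$ and verify $w_0\in H_{s,\delta}$. Since $w_0\in C^\infty(\setR^3)$, all derivatives fall off polynomially and no regularity issue arises at the origin. The entire point is to estimate each block of the dyadic norm \eqref{eq:weighted:4}. On the support of $\psi_j$ the variable satisfies $|x|\sim 2^j$, so
\begin{equation*}
  (\psi_j w_0)_{(2^j)}(y)=\psi_j(2^jy)\bigl(a^2+2^{2j}|y|^2\bigr)^{-1/4}
  =2^{-j/2}\,\psi_j(2^jy)\bigl(2^{-2j}a^2+|y|^2\bigr)^{-1/4},
\end{equation*}
and the second factor is a smooth function on the fixed annulus $\frac{1}{4}\le|y|\le 2$ whose $H^s$ norm is uniformly bounded in $j$ (using \eqref{eq:weighted:1} for derivatives of $\psi_j(2^j\cdot)$). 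Hence $\|(\psi_jw_0)_{(2^j)}\|_{H^s}\lesssim 2^{-j/2}$ for all $j\ge 1$, and
\begin{equation*}
  \|w_0\|_{H_{s,\delta}}^2\lesssim 1+\sum_{j=1}^{\infty}2^{(3+2\delta)j}\,2^{-j}
  =1+\sum_{j=1}^{\infty}2^{(2+2\delta)j},
\end{equation*}
which converges precisely when $\delta<-1$. This explains the choice of the upper bound on $\delta$ in the statement.

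Finally, with both endpoints checked ($-\frac{23}{18}<\delta<-1$ and $s>\frac{5}{2}$) and $w_0\ge 0$, the hypotheses of Theorem~\ref{thm:main:1} are met, so there exist $T>0$ and a unique solution $(w,v^a)\in C([0,T],H_{s,\delta})\cap C^1([0,T],H_{s-1,\delta+1})$ evolving from $(w_0,0)$. The recovery of the physical static profile \eqref{eq:gamblin:9} from $w_0$ via \eqref{eq:proto-euler-poisson-banach:1} is then automatic.

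The main obstacle, and really the only nontrivial step, is the dyadic estimate above: one has to be sure that the factor $\psi_j(2^j\cdot)$, whose own derivative bounds come from \eqref{eq:weighted:1}, does not spoil the uniform-in-$j$ control of the $H^s$ norm on the reference annulus. That estimate is routine but must be done carefully, because it is exactly the computation that pins down $\delta<-1$ as sharp for this family of static data and thus justifies why Theorem~\ref{thm:main:1}'s weaker condition $\delta<-\frac{1}{2}$ is tightened in the corollary.
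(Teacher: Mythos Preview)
Your argument is correct and follows exactly the paper's approach: specialize the parameters in Theorem~\ref{thm:main:1} to $\gamma=\frac{6}{5}$, observe that the lower bound on $\delta$ becomes $-\frac{23}{18}$, and verify that the static profile $w_0=(a^2+|x|^2)^{-1/4}$ lies in $H_{s,\delta}$ precisely when $\delta<-1$. The only difference is that the paper simply asserts the membership $w_0\in H_{s,\delta}$ for $\delta<-1$ without computation, whereas you supply the dyadic estimate explicitly; your additional work is sound and makes the threshold $\delta<-1$ transparent.
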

\begin{proof}
  The proof is straightforward. 
  As discussed above for $\gamma=\frac{6}{5}$, $\rho$ is given by
  equation (\ref{eq:gamblin:9}), while $w$ is given by equation
  (\ref{eq:section-0003:1}).
  Note that $(a^2+|x|^2)^{-\frac{1}{4}}\in H_{s,\delta}$ if
  $\delta <-1$. 
  On the other hand the lower bound for $\delta$ in Theorem
  \ref{thm:main:1} for $\gamma=\frac{6}{5}$ gives us
  \begin{math}
    -\frac{3}{2}+\frac{2}{9}=-\frac{23}{18}<-1
  \end{math}.
\end{proof}
  
Note that the well posedness is obtained in the term of the Makino
variable. 
Nevertheless, setting
$\rho(t,x)=c_{K,\gamma}w^{\frac{2}{\gamma-1}}(t,x)$, 
$c_{K,\gamma}=\left(\frac{2\sqrt{K\gamma}}{\gamma-1}\right)^{\frac{-2}
{\gamma-1}}$,
we also get a classical solution to the Euler--Poisson system
(\ref{eq:iteration-scheme:1})--(\ref{eq:iteration-scheme:3}).
 
\begin{cor}[Local solutions of the original Euler--Poisson system]
  \label{cor:section-0003:3}
  Let $1<\gamma<\frac{5}{3}$,
  $-\frac{3}{2}+\frac{3}{\left[\frac{2}{\gamma-1}\right]}\leq
  \delta<-\frac{1}{2}$,
  $ \frac{5}{2}<s$ if $\frac{2}{\gamma-1}$ is an integer and
  $\frac{5}{2}<s<\frac{5}{2}+\frac{2}{\gamma-1}-\left[\frac{2}{\gamma-1}\right]$
  otherwise. 
  Suppose $(\rho^{\frac{2}{\gamma-1}}_0,v^a_0)\in H_{s,\delta}$. Then
  there exists a positive $T$ and a unique  $C^1$--solution $(\rho, v^a)$  to 
the Euler--Poisson system
  (\ref{eq:iteration-scheme:1})--(\ref{eq:iteration-scheme:3}) with
  the equation the equation of state (\ref{eq:state}) such that
  \begin{equation*}
    \left(\rho(t,\cdot),v^a(t,\cdot)\right)\in  L^\infty([0,T],H_{s,\delta}).
  \end{equation*}
\end{cor}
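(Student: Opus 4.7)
The plan is to recover $(\rho, v^a)$ from a solution of the Euler--Poisson--Makino system via the inverse Makino transformation $\rho = c_{K,\gamma}\, w^{p}$ with $p := \frac{2}{\gamma-1}$. First I would set $w_0 := c_{K,\gamma}^{-1/p}\,\rho_0^{(\gamma-1)/2} \ge 0$ and, using the nonlinear power estimate (Proposition \ref{prop:power}), verify that the hypothesis on the initial data translates into $w_0 \in H_{s,\tilde\delta}$ for some $\tilde\delta$ lying in the admissible $\delta$--range of Theorem \ref{thm:main:1}. The corollary's slightly more restrictive lower bound $\delta \ge -\frac{3}{2} + \frac{3}{[p]}$ is chosen precisely so that this weight bookkeeping is consistent with the final weight required for $\rho$.

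Next I would invoke Theorem \ref{thm:main:1} to obtain the unique local solution $(w, v^a) \in C([0,T], H_{s,\tilde\delta}) \cap C^1([0,T], H_{s-1,\tilde\delta+1})$ with $w(t,\cdot) \ge 0$ on $[0,T]$, and define $\rho(t,x) := c_{K,\gamma}\, w(t,x)^{p}$. Substituting $w = c_{K,\gamma}^{-1/p}\,\rho^{(\gamma-1)/2}$ back into (\ref{eq:iteration-scheme:1b})--(\ref{eq:iteration-scheme:3b}) and using the barotropic identity
\begin{equation*}
\partial^a p \;=\; K\gamma\, \rho^{\gamma-1}\partial^a \rho \;=\; \rho\cdot \tfrac{\gamma-1}{2}\, w\,\partial^a w,
\end{equation*}
one recovers (\ref{eq:iteration-scheme:1})--(\ref{eq:iteration-scheme:3}) by direct algebraic manipulation; equation (\ref{eq:iteration-scheme:3}) is (\ref{eq:iteration-scheme:3b}) itself. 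Nonnegativity of $w$ is essential here because $p$ is in general non-integer.

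The regularity claim $\rho \in L^\infty([0,T], H_{s,\delta})$ then follows from a second application of Proposition \ref{prop:power}, this time to $w(t,\cdot) \in H_{s,\tilde\delta}$ at each $t \in [0,T]$; the resulting estimate $\|w(t,\cdot)^p\|_{H_{s,\delta}} \le C\,\|w(t,\cdot)\|_{H_{s,\tilde\delta}}^{p}$ is uniform in $t$ thanks to the continuity statement in Theorem \ref{thm:main:1}. The fact that $(\rho, v^a)$ is a classical $C^1$ solution of the original system follows from $s > \frac{5}{2}$ via the weighted Sobolev embedding into $C^1$.

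The main obstacle lies in the weight arithmetic of the first and third steps: it must be checked that the corollary's lower bound $-\frac{3}{2} + \frac{3}{[p]}$ is exactly the threshold allowing Proposition \ref{prop:power} to link the theorem's weight $\tilde\delta$ with the corollary's weight $\delta$ consistently on both sides (once to pass from the data on $\rho_0$ to the data on $w_0$, and once to pass from $w$ back to $\rho$). Beyond this bookkeeping, the argument reduces to a routine verification that the Makino substitution interchanges the two systems and that positivity is preserved.
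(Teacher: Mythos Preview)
Your overall architecture is right---recover $(\rho,v^a)$ from the Makino solution $(w,v^a)$ via $\rho=c_{K,\gamma}w^{p}$---but the execution has two concrete problems, and the paper's actual proof is considerably simpler.

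First, the hypothesis of the corollary (despite the unfortunate exponent typo in the statement) is meant to say that the Makino variable $w_0\propto\rho_0^{(\gamma-1)/2}$ itself lies in $H_{s,\delta}$. There is no ``first step'' in which one must manufacture $w_0\in H_{s,\tilde\delta}$ from data on $\rho_0$; Proposition~\ref{prop:power} only handles powers $\beta\geq 2$ and therefore cannot take you from $\rho_0$ to $w_0=\rho_0^{1/p}$ anyway. The paper simply applies Theorem~\ref{thm:main:1} with the \emph{same} weight $\delta$ (the corollary's lower bound is at least as large as the theorem's, so this is licit), obtaining $(w,v^a)\in C([0,T],H_{s,\delta})$. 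No auxiliary $\tilde\delta$ is introduced.

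Second, and more seriously, your back-conversion step is off by one derivative. Proposition~\ref{prop:power} maps $H_{s,\delta}$ into $H_{s-1,\delta+2}$, not into $H_{s,\delta'}$; so invoking it for $w\mapsto w^{p}$ would only yield $\rho\in L^\infty([0,T],H_{s-1,\cdot})$, which is weaker than the asserted $\rho\in L^\infty([0,T],H_{s,\delta})$. The paper avoids this loss by using Proposition~\ref{Kateb} instead: since $w(t,\cdot)\in H_{s,\delta}\subset L^\infty$ (Proposition~\ref{prop:emb-cont}), $p>1$, and the upper bound on $s$ in the non-integer case guarantees $s<p+\tfrac12$, Kateb's estimate gives $\|\rho(t,\cdot)\|_{H_{s,\delta}}\leq C\|w(t,\cdot)\|_{H_{s,\delta}}$ with no loss of regularity and no change of weight. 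The $C^1$ conclusion then follows from Proposition~\ref{prop:emb-cont}~(ii). In short: drop the $\tilde\delta$ bookkeeping entirely and replace Proposition~\ref{prop:power} by Proposition~\ref{Kateb} in the back-conversion.
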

Please note that the initial data in Corollary
\ref{cor:section-0003:3} are given by the Makino variable $w$ and not
by the physical quantity $\rho$. 
It is  an open problem to solve the Euler--Poisson system entirely
in terms of $\rho$ for situations in which $\rho$ could be zero.

\begin{proof}[Proof of Corollary \ref{cor:section-0003:3}]
  Set $w_0=c_{k,\gamma}\rho_0^{\frac{\gamma-1}{2}}$, then Theorem
  \ref{thm:main:1} provides a unique solution
  $\left(w(t,\cdot),v^a(t,\cdot)\right)\in H_{s,\delta}$ with the
  corresponding initial data. 
  By Propositions \ref{prop:emb-cont} and \ref{Kateb},
  $\|\rho(t,\cdot)\|_{H_{s,\delta}}\leq C
  \|w(t,\cdot)\|_{H_{s,\delta}}$. Since $s>\frac{5}{2}$ and 
$\delta>-\frac{3}{2}$, then by the embedding, Propositions \ref{prop:emb-cont} 
(ii), yields that $(\rho,v^a)\in C^1$,
  and obviously they  satisfy
  (\ref{eq:iteration-scheme:1})--(\ref{eq:iteration-scheme:3}). 
\end{proof}
There exists a wide range of publication concerning the non-linear
stability of stationary solutions of the Euler-Poisson system relying
on the method of energy functionals,
see for example Rein~\cite{jang08:_nonlin_euler_poiss,
  rein03:_non
}. 
Having this context in mind we turn to the question of finite mass and
finite energy functional.
\begin{cor}[Finite mass and finite energy functional]
  \label{cor:section-0003:2}
 The solutions obtained by Theorem \ref{thm:main:1} 
have the
  properties that,
  \begin{enumerate}
    \item $\rho(t,\cdot)\in L^1(\setR^3)$, that is, they have finite
    mass.
    \item The energy functional
    \begin{equation}
      \label{eq:energy-functional}
      E=E(\rho,v^a):=\int\left(\frac{1}{2}\rho 
        |v^a|^2+\frac{K\rho^\gamma}{\gamma-1}\right) 
      dx-\frac{1}{2}\iint\frac{\rho(t,x)\rho(t,y)}{|x-y|}dx dy
    \end{equation}
    is well defined for those solutions. 
  \end{enumerate}
\end{cor}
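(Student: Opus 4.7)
The plan is to reduce both statements to the structural fact that $\rho(t,\cdot) \in L^1(\setR^3) \cap L^\infty(\setR^3)$ uniformly in $t \in [0,T]$, and then handle each summand of the energy functional by elementary inequalities.

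The main step is the finite mass claim. By Theorem \ref{thm:main:1}, $w(t,\cdot) \in H_{s,\delta}$ for every $t \in [0,T]$, so the nonlinear power estimate of Proposition \ref{prop:power} applies to $\rho = c_{K,\gamma} w^{2/(\gamma-1)}$ and places it in a weighted space $H_{s,\delta'}$ with an improved decay exponent $\delta'$. Combined with the pointwise decay $|\rho(x)| \leq C(1+|x|)^{-(3/2+\delta')}$ supplied by the embedding $H_{s,\delta'} \hookrightarrow L^\infty$ (Proposition \ref{prop:emb-cont}, available since $s > 5/2$), this produces $\rho(t,\cdot) \in L^1(\setR^3)$ for every $t$. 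I expect this to be the main obstacle: a naive pointwise bound on $w$ alone is generally too weak to make $w^{2/(\gamma-1)}$ integrable throughout the admissible range of $\delta$, and the improvement of the weight exponent furnished by Proposition \ref{prop:power} is exactly what the argument needs. The bound $\rho \in L^\infty$ follows in parallel, since $w \in H_{s,\delta} \hookrightarrow L^\infty$ and the power map preserves boundedness.

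With $\rho \in L^1 \cap L^\infty$ and $v^a \in L^\infty$ (the latter from $v^a \in H_{s,\delta} \hookrightarrow L^\infty$), the three terms of the energy functional are immediate. The kinetic term obeys $\int \rho\,|v^a|^2\,dx \leq \|v\|_{L^\infty}^2 \|\rho\|_{L^1}$, and the internal energy obeys $\int \rho^\gamma\,dx \leq \|\rho\|_{L^\infty}^{\gamma-1} \|\rho\|_{L^1}$. For the self-gravitational term I apply the Hardy--Littlewood--Sobolev inequality in $\setR^3$ with kernel $|x-y|^{-1}$,
\begin{equation*}
\iint \frac{\rho(t,x)\rho(t,y)}{|x-y|}\,dx\,dy \leq C\,\|\rho(t,\cdot)\|_{L^{6/5}}^2,
\end{equation*}
and $\|\rho\|_{L^{6/5}}$ is finite by interpolation between $L^1$ and $L^\infty$. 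All constants depend continuously on the $H_{s,\delta}$-norms of $(w,v^a)$, which are uniformly bounded on $[0,T]$ by Theorem \ref{thm:main:1}.
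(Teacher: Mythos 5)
Your overall strategy is sound, but there is a concrete error in the first and most important step: you invoke Proposition~\ref{prop:power}, which places $\rho=c_{K,\gamma}w^{\beta}$ in $H_{s-1,\delta+2}$, and then appeal to the embedding to obtain pointwise decay $(1+|x|)^{-(3/2+\delta')}$ with $\delta'=\delta+2$. That is too weak. Under the hypotheses of Theorem~\ref{thm:main:1} one has $\delta<-\tfrac{1}{2}$, hence $\delta+2<\tfrac{3}{2}$, and the resulting decay exponent $3/2+\delta'$ is strictly below $3$, so it does not yield integrability on $\setR^3$. The proposition you actually need is Proposition~\ref{prop:power:2}, which the paper formulated precisely for this purpose: it exploits Lemma~\ref{lem:3} with the full power $[\beta]$ rather than just with $\delta+2$, and under the additional lower bound $\frac{3}{[\beta]}-\frac{3}{2}<\delta$ it lands $\rho$ in $H_{s-1,\delta'}$ for some $\delta'>\tfrac{3}{2}$. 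With that replacement your pointwise-decay route to $\rho\in L^1$ is valid (and is an equally natural alternative to the paper's route, which instead passes through $L^2_{\delta'}\subset L^1$ for $\delta'>\tfrac{3}{2}$, Proposition~\ref{prop: L-1}). A minor further slip: the power estimate loses one order of regularity, so $\rho$ lives in $H_{s-1,\delta'}$, not $H_{s,\delta'}$; this is harmless because $s-1>\tfrac{3}{2}$, but worth tracking.

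Once $\rho\in L^1\cap L^\infty$ is secured, the rest of your argument is correct. The kinetic and internal-energy bounds are exactly what the paper does in spirit (it writes $\rho^{\gamma}=c\,w^2\rho$, which is equivalent to your $\|\rho\|_{L^\infty}^{\gamma-1}\|\rho\|_{L^1}$). For the self-gravitating term you take the Hardy--Littlewood--Sobolev route through $L^{6/5}$, whereas the paper splits the Newtonian potential $V(t,x)=\int \rho(t,y)|x-y|^{-1}\,dy$ into near and far parts and bounds it directly by $2\pi\|\rho\|_{L^\infty}+\|\rho\|_{L^1}$; both are valid and both ultimately rest only on $\rho\in L^1\cap L^\infty$, so the difference is cosmetic. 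In short: swap Proposition~\ref{prop:power} for Proposition~\ref{prop:power:2} and the proof goes through.
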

\subsection{The advantages of the $H_{s,\delta}$ spaces}
\label{subsection:advantages}
In this section, we discuss the consequences of our main
result, Theorem \ref{thm:main:1} and possible applications, and compare
them with previous results obtained by other authors.
\begin{itemize}
  \item We recall that the Euler--Poisson system
  (\ref{eq:iteration-scheme:1})-(\ref{eq:iteration-scheme:3})
  degenerates when the density approaches to zero and the only known
  method to solve an initial value problem in this context is to
  regularize the Euler equations by introducing the Makino variable
  (\ref{eq:proto-euler-poisson-banach:1}).  
  All the previous local existence results
  \cite{makino86:_local_exist_theor_evolut_equat_gaseous_stars,
    gamblin93:_solut_euler
    , Bezard_93
  }, including the present paper, have used this technique. 
  Thus in order to include the spherical symmetric static solutions of
  the Lane--Emden equation for $\gamma=\frac{6}{5}$ in our class of
  solutions, it is necessary to express it in terms of the Makino
  variable $w$. 
  But from (\ref{eq:section-0003:1}) we see that this function does
  not belong to the Sobolev $H^s$ space. 
  \item To overcome the difficulty with the Makino variable
  Gamblin uses uniformly locally Sobolev spaces $H_{ul}^s$ spaces which
  were introduced by Kato. 
  However as it was pointed out by Majda \cite[Thm 2.1, p.
  5 0]{majda84:_compr_fluid_flow_system_conser
  }, that for first order symmetric hyperbolic systems with a given initial
  data $u_0\in H^s_{ul}$, $\frac{3}{2}+1<s$ the corresponding solutions
  belong only to
  $C \left([0,T];H_{loc}^s \right)\cap C^1 \left([0,T];H_{loc}^{s-1}
  \right)\cap L^{\infty}\left([0,T];H_{ul}^s \right)$.
  Furthermore, continuity in the $H_{ul}^s$ norm causes a loss of
  regularity \cite[Theorem
  2.4]{gamblin93:_solut_euler
  }.
  We prove well-posedness in the $H_{s,\delta}$ spaces, Theorem
  \ref{thm:existence-quasi}, and circumvent these weaknesses of the
  uniformly locally Sobolev spaces.
  \item Another benefit of the $H_{s,\delta}$ spaces concerns the treatment
  of the Poisson equations. 
  The Laplacian is a Fredholm operator in those spaces
  \cite{mcowen79:_behav_sobol_spaces, cantor75:_spaces_funct_condit_r}
  (see Subsection \ref{sec:elliptic-estimate}), and for certain values
  of $\delta$ is an isomorphism. 
  Thus with the aid of the nonlinear power estimate, Proposition \ref{prop:power},
  we are able to treat both the hyperbolic and the elliptic
  part in the same type of Sobolev spaces. 
  On the contrary, the $H_{ul}^s$ are not suited for the Poisson
  equation. 
  To circumvent this difficulty Gamblin demands that the initial
  density $\rho_0\in W^{1,p}$, $1\leq p<3$. 
  Therefore he has two types of initial data, namely,
  $\rho_0\in W^{1,p}$ and the Makino variable
  $\rho_0^{\frac{\gamma-1}{2}}\in H_{ul}^s$. 
  However his initial data for the velocity $v^a_0$ belongs to $ H^s_{ul}$. 
  Under these initial conditions Gamblin proved that for
  $\frac{7}{2}<s<\frac{2}{\gamma-1}$ the solutions are:
  \begin{equation*}
      (\rho,v^a)\in \cap_{i=1,2} C^i \left( [0,T^{*}];H^{s^{\prime}-i}_{ul}
      \right),
      \quad s^{\prime}<s,\quad
      \rho\in L^{\infty}\left([0,T];W^{1,p}\cap H_{ul}^{s_\epsilon} \right),
  \end{equation*}
  where $s_\epsilon=\min\{\frac{2}{\gamma-1}-\epsilon, s\}$ if
  $\frac{2}{\gamma-1}\not\in\setN$ and $s_\epsilon =s$ otherwise.
  Thus the density belongs to $W^{1,p}$ and falls off at infinity,
  while the velocity is in $H_{ul}^s$ and therefore does not tend to zero. 
  Such a class solutions, even if it contains spherical symmetric
  static solutions, do not model isolated bodies in an appropriate
  way.
  \item The uniform Sobolev spaces $H^s_{ul}$, that Gamblin  used in order
  to include the static solutions for $\gamma=\frac{6}{5}$, are not suited
  for the Einstein--Euler system in an asymptotically flat setting. 
  Recall that in these functional spaces the Einstein constraint
  equations cannot be solved, while they can be solved using the
  $H_{s,\delta}$ spaces.
  The last question is important if one considers the Euler--Poisson
  system as the Newtonian limit of the Einstein--Euler system.

  Oliynyk \cite{oliynyk07:_newton_limit_perfec_fluid}
  proved the Newtonian limit in an asymptotically flat setting.  
  He showed that solutions of the Einstein-Euler system converges to
  solutions of the Euler--Poisson system, under the restriction that
  the density has compact support. 
  In order to generalize his result to the case where the density only
  falls off in an appropriate way one needs a functional setting which
  is suited for both systems. 
  While the weighted fractional Sobolev spaces are known to be
  appropriate, there is no existence result known for the Einstein
  equations 
  (plus matter fields) in an asymptotically flat situation using the
  functional setting of  $H_{ul}^s$ spaces.

\end{itemize}  
  
\section{Weighted fractional Sobolev spaces}
\label{sec:more-useful-tools}
The weighted Sobolev spaces whose weights vary with the order of the
derivatives and which are of integer order can be defined as a
completion of $C_0^\infty(\setR^3)$ under the norm
\begin{equation}
  \label{eq:norm:1}
  \|u\|_{m,\delta}^2=\sum_{|\alpha|\leq 
    m}\|(1+|x|)^{\delta+|\alpha|}|\partial^\alpha u|\|_{L^2}^2.
\end{equation}
These spaces were introduced by Nirenberg and Walker
\cite{nirenberg73:_null_spaces_ellip_differ_operat_r}. 
Triebel extended them to fractional order and proved basic properties
such as duality, interpolation, and density of smooth functions
\cite{triebel76:_spaces_kudrj2}. 
Triebel expressed the fractional norm in an integral form and used the
dyadic decomposition of the norm (\ref{eq:weighted:4}) just in order
to derive certain properties. 
We have adopted it as a definition of the norm since it enables us to
extend many of the properties of Sobolev spaces $H^{s}$ to
$H_{s,\delta}$.  
\subsection{Properties of the weighted  fractional Sobolev spaces}
\label{sec:prop-weight-sobol}
Here we quote the propositions and properties which are needed for the
proof of the main result. 
For their proofs and further details see
\cite{BK3, BK7, triebel76:_spaces_kudrj2}.
\begin{thm}[Triebel, Basic properties]
  \label{thm:Triebel}
  Let $s,\delta\in \setR$.
  \begin{enumerate}
    \item[{\rm (a)}] The space $H_{s,\delta}$ is a Banach space and
    different choices of dyadic resolutions $\{\psi_j\}$ which
    satisfies (\ref{eq:weighted:1}) result in equivalent norms. 
    \item[{\rm (b)}] $C_0^\infty(\setR^3)$ is a dense subset in
    $H_{s,\delta}$.
    \item[{\rm (c)}] The topological dual space of $H_{s,\delta}$ is
    $H_{-s,-\delta}$.
    \item[{\rm (d)}]
    \label{thm:interpolation}
    Interpolation: Let $0<\theta<1$, $s=\theta s_0+(1-\theta)s_1$ and
    $\delta=\theta \delta_0+(1-\theta)\delta_1$, then
    \begin{math}
      \left[H_{s_1,\delta_1},
        H_{s_2,\delta_2}\right]_{\theta}=H_{s,\delta}.
    \end{math}
  \end{enumerate}
\end{thm}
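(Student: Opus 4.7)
The plan is to exploit the key observation that the norm (\ref{eq:weighted:4}) makes $H_{s,\delta}$ isometrically isomorphic to a closed subspace of the weighted Hilbert direct sum $\bigoplus_{j=0}^\infty 2^{j(3/2+\delta)} H^s(\setR^3)$, via the map $T: u \mapsto \{2^{j(3/2+\delta)}(\psi_j u)_{(2^j)}\}_{j\geq 0}$. Almost every property then transfers from the corresponding property of $H^s$, with the subtleties coming from the overlap of the dyadic partition and the scaling.

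For \textbf{(a)}, the Banach (Hilbert) space property follows because $T(H_{s,\delta})$ is closed in the weighted $\ell^2$-sum: given a Cauchy sequence in $H_{s,\delta}$, the images are Cauchy in the direct sum, hence convergent, and one checks that the limiting sequence comes from a tempered distribution (since the partial sums converge in $\mathcal{S}'$). Equivalence of norms for two dyadic resolutions $\{\psi_j\}$ and $\{\tilde\psi_j\}$ reduces, after scaling, to the multiplier statement that for each fixed $j$ one has $(\tilde\psi_j u)_{(2^j)} = \sum_{|k-j|\leq C}(\tilde\psi_j)_{(2^j)}\cdot (\psi_k u)_{(2^j)}$, where the functions $(\tilde\psi_j)_{(2^j)}$ are uniformly $C^\infty_0$-bounded on a fixed annulus by (\ref{eq:weighted:1}); this uniform bound makes them uniform pointwise multipliers in $H^s$, and the summation over the $O(1)$-overlap yields the desired two-sided inequality.

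For \textbf{(b)}, I truncate: set $u_N=\sum_{j=0}^N\psi_j u$. The tail $u-u_N$ has dyadic pieces indexed by $j>N-C$, so by (\ref{eq:weighted:4}) it tends to zero in norm. Each $u_N$ has compact support, and within the fixed compact region the weights $2^{(3/2+\delta)j}$ and the scaling $2^j$ are comparable to constants, so convergence of mollifications in $H^s$ gives convergence in $H_{s,\delta}$; smooth cutoffs plus mollification thus approximate $u_N$ by $C_0^\infty$ functions.

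For \textbf{(c)}, I use the identification in (a). The dual of the weighted direct sum $\bigoplus 2^{j(3/2+\delta)}H^s$ is $\bigoplus 2^{-j(3/2+\delta)}H^{-s}$ under the $L^2$-type pairing, and unscaling the pairing $\langle (\psi_j u)_{(2^j)},(\psi_j v)_{(2^j)}\rangle_{L^2}$ picks up a Jacobian $2^{-3j}$. This converts the dual weight $2^{-j(3/2+\delta)}$ into exactly the weight $2^{j(3/2-\delta)} = 2^{j(3/2+(-\delta))}$ required for $H_{-s,-\delta}$, and a partition-of-unity argument (using the finite overlap) shows every element of the dual is represented by a unique distribution in $H_{-s,-\delta}$.

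For \textbf{(d)}, I appeal to complex interpolation for direct sums of Hilbert spaces: $\bigl[\bigoplus w_j^0 X_j^0,\,\bigoplus w_j^1 X_j^1\bigr]_\theta=\bigoplus (w_j^0)^{1-\theta}(w_j^1)^\theta [X_j^0,X_j^1]_\theta$, together with the classical scale $[H^{s_0},H^{s_1}]_\theta=H^s$. The weights $2^{j(3/2+\delta_k)}$ geometrically interpolate to $2^{j(3/2+\theta\delta_0+(1-\theta)\delta_1)}$, giving the claimed equality of spaces via the isomorphism $T$. The main obstacle throughout is the overlap bookkeeping in (a): once one has the two-sided equivalence of norms uniformly in the partition $\{\psi_j\}$, properties (b)–(d) follow from standard Hilbert direct-sum machinery and the properties of $H^s$.
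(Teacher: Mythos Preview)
The paper does not actually prove Theorem~\ref{thm:Triebel}; it is quoted as a result of Triebel, with the remark ``For their proofs and further details see \cite{BK3, BK7, triebel76:_spaces_kudrj2}.'' So there is no in-paper proof to compare against. Your sketch is essentially the standard retraction--coretraction argument that Triebel uses: realize $H_{s,\delta}$ as (a complemented subspace of) the weighted $\ell^2$-sum $\bigoplus_j 2^{(3/2+\delta)j}H^s$ via $u\mapsto\{(\psi_j u)_{(2^j)}\}$, and pull back the Banach, density, duality and interpolation properties from the direct sum. The Jacobian bookkeeping in~(c) and the multiplier argument in~(a) are correct.

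One point deserves more care. In~(d) you invoke complex interpolation of the ambient direct sums, but $T(H_{s,\delta})$ is only a \emph{closed subspace}, and complex interpolation does not in general commute with passage to closed subspaces. What makes the argument go through is that the embedding $T$ admits a bounded left inverse (a retraction) $R:\{f_j\}\mapsto \sum_j \Psi_j\,(f_j)_{(2^{-j})}$ built from a companion partition $\{\Psi_j\}$ with $\Psi_j\psi_j=\psi_j$; by~(\ref{eq:weighted:1}) this $R$ is bounded simultaneously on the two endpoint direct sums. The retraction--coretraction lemma then identifies $[H_{s_0,\delta_0},H_{s_1,\delta_1}]_\theta$ with $R\bigl([\,\cdot\,,\,\cdot\,]_\theta\bigr)=H_{s,\delta}$. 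Without this retraction your argument for~(d) is incomplete; with it, your outline is correct and matches the approach in the cited references.
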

It was shown in \cite[page 68]{BK7} that one can construct a dyadic
sequence $\{\psi_j\}$, namely the one which we have introduced in section
\ref{sec:main-result},  in such way that for any positive $\gamma$,
$\psi_j^\gamma \in C_0^\infty(\mathbb{R}^3)$, and for each multi-index
$\alpha$ there exist two constants $C_1(\gamma,\alpha)$ and $C_2(\gamma,\alpha)$ such that
\begin{equation*}
  C_1(\gamma,\alpha)|\partial^\alpha \psi_j(x)|\leq
  |\partial^\alpha \psi_j^\gamma(x)| 
  \leq C_2(\gamma,\alpha)|\partial^\alpha \psi_j(x)|.
\end{equation*}
These inequalities are independent of $j$. 
Hence $\{\psi_j^\gamma\}$ satisfies (\ref{eq:weighted:1}) and that is why  it  
is an admissible dyadic resolution and  therefore   by Theorem
\ref{thm:Triebel} (a), we obtain the following equivalence.
\begin{prop}
  \label{prop:equivalence:1}
  For any positive $\gamma$, the norm
  \begin{equation}
    \label{eq:norm:2}
    \|u\|_{H_{s,\delta,\gamma}}^2:=\sum_{j=0}^\infty 
    2^{(\delta+\frac{3}{2})2j}\left\|\left(\psi_j^\gamma 
        u\right)_{2^j}\right\|_{H^s}^2
  \end{equation}
  is equivalent to $\|u\|_{H_{s,\delta}}$.
\end{prop}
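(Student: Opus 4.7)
The plan is to reduce the equivalence to a direct application of Theorem \ref{thm:Triebel}(a), which states that any two admissible dyadic resolutions satisfying the bound (\ref{eq:weighted:1}) yield equivalent norms on $H_{s,\delta}$. Thus the entire task is to verify that $\{\psi_j^\gamma\}_{j=0}^\infty$ qualifies as such a resolution.

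First I would record the two structural facts. By construction of $\{\psi_j\}$ in \cite{BK7}, each $\psi_j^\gamma$ lies in $C_0^\infty(\mathbb{R}^3)$ for every $\gamma > 0$ (this is the nontrivial input, as rational or irrational powers of a smooth cutoff are not automatically smooth near the boundary of its support; the cited construction of $\psi_j$ is arranged precisely to make $\psi_j^\gamma$ smooth). Moreover the support of $\psi_j^\gamma$ coincides with that of $\psi_j$, so the dyadic annular localization $\mathrm{supp}(\psi_j^\gamma) \subset \{2^{j-2} \leq |x| \leq 2^{j+1}\}$ (for $j \geq 1$) and $\mathrm{supp}(\psi_0^\gamma) \subset \{|x| \leq 2\}$ is inherited for free.

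Next I would verify the derivative condition (\ref{eq:weighted:1}) for $\{\psi_j^\gamma\}$. This is exactly the content of the two-sided inequality displayed above the proposition,
\begin{equation*}
C_1(\gamma,\alpha)|\partial^\alpha \psi_j(x)| \leq |\partial^\alpha \psi_j^\gamma(x)| \leq C_2(\gamma,\alpha)|\partial^\alpha \psi_j(x)|,
\end{equation*}
with constants independent of $j$. Combining the upper bound with (\ref{eq:weighted:1}) for the original resolution gives $|\partial^\alpha \psi_j^\gamma(x)| \leq C_2(\gamma,\alpha) C_\alpha\, 2^{-|\alpha|j}$, so $\{\psi_j^\gamma\}$ satisfies (\ref{eq:weighted:1}) with constants $\tilde C_\alpha := C_2(\gamma,\alpha) C_\alpha$ independent of $j$.

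Having shown that $\{\psi_j^\gamma\}$ is an admissible dyadic resolution in the sense of Theorem \ref{thm:Triebel}(a), the equivalence of the norm
\begin{equation*}
\|u\|_{H_{s,\delta,\gamma}}^2 = \sum_{j=0}^\infty 2^{(\delta+\frac{3}{2})2j} \|(\psi_j^\gamma u)_{2^j}\|_{H^s}^2
\end{equation*}
with the original norm $\|u\|_{H_{s,\delta}}$ follows immediately from that theorem. The only real obstacle is the smoothness of $\psi_j^\gamma$ for non-integer $\gamma$; this is delegated to the specific construction of $\{\psi_j\}$ in \cite[p.~68]{BK7}, which the authors explicitly invoke, and everything else reduces to comparing derivative bounds and quoting Triebel's equivalence theorem.
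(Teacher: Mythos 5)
Your proof is correct and follows exactly the paper's own argument: the paper establishes the proposition by citing the construction in \cite{BK7} showing $\psi_j^\gamma\in C_0^\infty$, quoting the two-sided derivative bound to verify (\ref{eq:weighted:1}) for $\{\psi_j^\gamma\}$, and then invoking Theorem \ref{thm:Triebel}(a). The only minor addition you could make explicit is that $\psi_j^\gamma=1$ on the core annulus (trivially, since $1^\gamma=1$), so all properties of a dyadic resolution are verified, but this is implicit in both your argument and the paper's.
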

\begin{prop}[Triebel \cite{triebel76:_spaces_kudrj}]
  \label{equivalence} 
  Let $s=m$ be an integer and $\gamma$ be positive number, then the norms
  (\ref{eq:norm:1}) and (\ref{eq:norm:2}) are equivalent. 
  In particular
  \begin{equation}
    \label{eq:norm:3}
    \|u\|_{H_{0,\delta,\gamma}}^2\simeq \|u\|_{L^2_\delta}^2:=\int 
    (1+|x|)^{2\delta} |u(x)|^2d x.
  \end{equation}
\end{prop}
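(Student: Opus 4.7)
The plan is to compare both norms against a single quantity, namely a dyadic sum of weighted $L^2$-norms of derivatives of $u$, by exploiting the classical integer characterization $\|v\|_{H^m}^2\simeq\sum_{|\alpha|\leq m}\|\partial^\alpha v\|_{L^2}^2$. First I would analyse a single summand of (\ref{eq:norm:2}). The chain rule gives $\partial^\alpha(\psi_j^\gamma u)_{2^j}(x)=2^{j|\alpha|}(\partial^\alpha(\psi_j^\gamma u))(2^j x)$, so the substitution $y=2^j x$ yields
\[
2^{(\delta+\frac{3}{2})2j}\|(\psi_j^\gamma u)_{2^j}\|_{H^m}^2\simeq\sum_{|\alpha|\leq m}2^{2j(\delta+|\alpha|)}\|\partial^\alpha(\psi_j^\gamma u)\|_{L^2}^2,
\]
with constants depending only on $m$. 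The crucial observation is that on $\operatorname{supp}\psi_j$ one has $(1+|x|)\simeq 2^j$ with constants uniform in $j$ (the index $j=0$ is handled separately since $\operatorname{supp}\psi_0\subset\{|x|\leq 2\}$, where $1+|x|\simeq 1\simeq 2^0$). Consequently the $j$-th summand of (\ref{eq:norm:2}) is equivalent to $\sum_{|\alpha|\leq m}\int(1+|x|)^{2(\delta+|\alpha|)}|\partial^\alpha(\psi_j^\gamma u)|^2\,dx$.

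To obtain $\|u\|_{m,\delta}\leq C\|u\|_{H_{s,\delta,\gamma}}$ I would restrict the integral in (\ref{eq:norm:1}) to the dyadic shells $A_j=\{2^{j-1}\leq|x|\leq 2^j\}$ (and $A_0=\{|x|\leq 1\}$). On $A_j$ the construction forces $\psi_j\equiv 1$, hence $\psi_j^\gamma\equiv 1$, so $\partial^\alpha(\psi_j^\gamma u)=\partial^\alpha u$ there, and the $j$-th dyadic piece already dominates $\int_{A_j}(1+|x|)^{2(\delta+|\alpha|)}|\partial^\alpha u|^2\,dx$. Summing over $j$ and over $|\alpha|\leq m$ reproduces $\|u\|_{m,\delta}^2$.

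For the reverse bound I would expand $\partial^\alpha(\psi_j^\gamma u)$ by Leibniz and apply the uniform estimate $|\partial^\beta\psi_j^\gamma(x)|\leq C_\beta 2^{-|\beta|j}$, which was established in the construction of $\{\psi_j^\gamma\}$ just before Proposition \ref{prop:equivalence:1}. The factor $2^{-|\beta|j}$ absorbs into $2^{j(\delta+|\alpha|)}\simeq(1+|x|)^{\delta+|\alpha|}$ on $\operatorname{supp}\psi_j$ to leave $(1+|x|)^{\delta+|\alpha-\beta|}|\partial^{\alpha-\beta}u|$, which is a term in the integrand of (\ref{eq:norm:1}) with $|\alpha-\beta|\leq m$. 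Since $\{\operatorname{supp}\psi_j\}$ covers $\setR^3$ with bounded overlap, summing over $j$ produces a constant multiple of $\|u\|_{m,\delta}^2$. The special case $m=0$ collapses directly to (\ref{eq:norm:3}): no derivatives appear, no Leibniz is needed, and $\|(\psi_j^\gamma u)_{2^j}\|_{L^2}^2=2^{-3j}\|\psi_j^\gamma u\|_{L^2}^2$ combines with the prefactor to give $\sum_j 2^{2j\delta}\|\psi_j^\gamma u\|_{L^2}^2\simeq\int(1+|x|)^{2\delta}|u|^2\,dx$.

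The main obstacle is purely organizational: one must keep all constants in the dyadic scaling argument and in $|\partial^\beta\psi_j^\gamma|\leq C_\beta 2^{-|\beta|j}$ uniform in $j$, and treat the index $j=0$ where the heuristic $(1+|x|)\simeq 2^j$ degenerates but is still valid with an absolute constant. Once these bookkeeping points are fixed, the change of variables, the localisation of the weight on $\operatorname{supp}\psi_j$, Leibniz, and the finite overlap of the cover combine mechanically to yield both inequalities.
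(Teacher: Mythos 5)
The paper does not give a proof of Proposition~\ref{equivalence}; the result is quoted directly from Triebel \cite{triebel76:_spaces_kudrj}, so there is no in-paper argument to compare against. Your proof is correct and is the natural one. The scaling identity $\|\partial^\alpha v_{\lambda}\|_{L^2}^2=\lambda^{2|\alpha|-3}\|\partial^\alpha v\|_{L^2}^2$ turns the $j$-th summand of (\ref{eq:norm:2}) into $\sum_{|\alpha|\leq m}2^{2j(\delta+|\alpha|)}\|\partial^\alpha(\psi_j^\gamma u)\|_{L^2}^2$, and the uniform comparability $(1+|x|)\simeq 2^j$ on $\operatorname{supp}\psi_j$ (including $j=0$) lets you swap $2^{2j(\delta+|\alpha|)}$ for the weight $(1+|x|)^{2(\delta+|\alpha|)}$. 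The lower bound for $\|u\|_{m,\delta}$ follows by restricting to the disjoint shells $A_j$, where $\psi_j^\gamma\equiv 1$ forces $\partial^\alpha(\psi_j^\gamma u)=\partial^\alpha u$ on the interior (the boundary is a null set, so this suffices), and the upper bound follows from Leibniz together with $|\partial^\beta\psi_j^\gamma|\lesssim 2^{-|\beta|j}$ and the bounded overlap of $\{\operatorname{supp}\psi_j\}$; the $m=0$ case gives (\ref{eq:norm:3}) immediately. One small bookkeeping point you could make explicit: in the reverse direction the Leibniz/triangle step produces, after squaring, a finite sum over $\beta\leq\alpha$ of terms $\int_{\operatorname{supp}\psi_j}(1+|x|)^{2(\delta+|\alpha-\beta|)}|\partial^{\alpha-\beta}u|^2\,dx$, and since $|\alpha-\beta|\leq m$ each is a summand of (\ref{eq:norm:1}); you wrote this at the level of integrands rather than squared norms, but the intent is clear and the argument closes as you say.
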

The monotonicity property presented below of the norm is a simple
consequence of the definition of the norm (\ref{eq:weighted:4}).
\begin{prop}
  \label{monoton}
  If $s_1\leq s_2$ and $\delta_1\leq\delta_2$, then
  \begin{math}
    \|u\|_{H_{s_1,\delta_1}}\leq \|u\|_{H_{s_2,\delta_2}}.
  \end{math}
\end{prop}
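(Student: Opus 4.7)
The statement is essentially a termwise monotonicity check on the dyadic expansion that defines the norm, so the plan is to work directly from Definition \ref{def:weighted:3} rather than invoking any deeper machinery.

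First, I would note that for a fixed tempered distribution $f$ the standard Sobolev norm $\|f\|_{H^s}^2=\int(1+|\xi|^2)^s|\hat f(\xi)|^2\,d\xi$ is monotone nondecreasing in $s$, because the factor $(1+|\xi|^2)^s$ is itself monotone in $s$ on account of $1+|\xi|^2\ge 1$. Applying this pointwise in $j$ to the functions $(\psi_j u)_{(2^j)}$, one obtains
\begin{equation*}
  \|(\psi_j u)_{(2^j)}\|_{H^{s_1}}^2 \;\le\; \|(\psi_j u)_{(2^j)}\|_{H^{s_2}}^2 \qquad (j=0,1,2,\dots)
\end{equation*}
whenever $s_1\le s_2$.

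Second, I would observe that for every $j\ge 0$ the dyadic weight factor $2^{(\frac{3}{2}+\delta)2j}$ is monotone nondecreasing in $\delta$: indeed $2^{2j(\delta_2-\delta_1)}\ge 1$ for $j\ge 0$ when $\delta_1\le\delta_2$ (the $j=0$ term is simply equal). Hence
\begin{equation*}
  2^{(\frac{3}{2}+\delta_1)2j}\;\le\;2^{(\frac{3}{2}+\delta_2)2j} \qquad (j=0,1,2,\dots).
\end{equation*}

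Finally, combining the two termwise inequalities under the hypothesis $s_1\le s_2$ and $\delta_1\le\delta_2$ gives
\begin{equation*}
  2^{(\frac{3}{2}+\delta_1)2j}\|(\psi_j u)_{(2^j)}\|_{H^{s_1}}^2 \;\le\; 2^{(\frac{3}{2}+\delta_2)2j}\|(\psi_j u)_{(2^j)}\|_{H^{s_2}}^2
\end{equation*}
for all $j$, and summation over $j$ yields $\|u\|_{H_{s_1,\delta_1}}^2\le\|u\|_{H_{s_2,\delta_2}}^2$, as claimed. There is no genuine obstacle here; the only thing to watch is that the inequality $2^{2j(\delta_2-\delta_1)}\ge 1$ is used for $j\ge 0$ (including $j=0$, where it is an equality), so the chosen dyadic resolution from Definition \ref{def:weighted:3} works without modification and the equivalence of norms from Theorem \ref{thm:Triebel}(a) is not needed.
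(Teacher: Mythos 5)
Your proof is correct and is precisely what the paper has in mind when it says the monotonicity is ``a simple consequence of the definition of the norm'': termwise monotonicity of the $H^s$ norm in $s$ (via $(1+|\xi|^2)^s$) combined with monotonicity of the dyadic weight $2^{(\frac{3}{2}+\delta)2j}$ in $\delta$ for $j\ge 0$, then summing. The paper leaves the details to the reader, and you have supplied them faithfully.
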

\begin{prop}
  \label{embedding1}
  If $u\in H_{s,\delta}$, then
  \begin{math}
    \|\partial_i u\|_{H_{s-1,\delta+1}}\leq \| u\|_{H_{s,\delta}}.
  \end{math}
\end{prop}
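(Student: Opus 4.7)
My plan is to reduce to the integer case and then interpolate. The key observation is that the shift $s \mapsto s-1$, $\delta \mapsto \delta+1$ induced by the derivative is completely natural on the integer-order weighted Sobolev norm (\ref{eq:norm:1}), and that complex interpolation lifts boundedness to fractional orders for free.

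\textbf{Step 1 (integer orders).} For $s = m$ a positive integer I would invoke Proposition \ref{equivalence}, which identifies $\|\cdot\|_{H_{m,\delta}}$ with the integer weighted norm (\ref{eq:norm:1}). Substituting $\beta = \alpha + e_i$ on the right-hand side gives
$$\|\partial_i u\|_{m-1,\delta+1}^{2} = \sum_{|\alpha| \leq m-1}\|(1+|x|)^{(\delta+1)+|\alpha|}\partial^{\alpha+e_i}u\|_{L^2}^{2} = \sum_{|\beta|\leq m,\ \beta_i\geq 1}\|(1+|x|)^{\delta+|\beta|}\partial^{\beta}u\|_{L^2}^{2},$$
which is a subsum of $\|u\|_{m,\delta}^{2}$, so the inequality is immediate. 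For non-positive integer orders I would extend this using the duality identity from Theorem \ref{thm:Triebel}(c): up to sign $\partial_i$ is its own formal adjoint, and duality exchanges $(s,\delta) \leftrightarrow (-s,-\delta)$, preserving the desired shift.

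\textbf{Step 2 (fractional orders).} For general $s$ I would pick integers $m_0 < s < m_1$ and $\theta \in (0,1)$ with $s = \theta m_0 + (1-\theta) m_1$, and apply Theorem \ref{thm:Triebel}(d) with $\delta_0 = \delta_1 = \delta$ to obtain $H_{s,\delta} = [H_{m_0,\delta}, H_{m_1,\delta}]_{\theta}$. Keeping the same $\theta$ but shifting indices gives $H_{s-1,\delta+1} = [H_{m_0-1,\delta+1}, H_{m_1-1,\delta+1}]_{\theta}$. Since Step 1 supplies boundedness of $\partial_i$ between both endpoint pairs with norm $\leq 1$, the standard complex interpolation theorem delivers boundedness on $H_{s,\delta}$ with the same constant.

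\textbf{Main obstacle.} The only delicate point is bookkeeping of constants. Step 1 yields the inequality with constant $1$ in the norm (\ref{eq:norm:1}), but Proposition \ref{equivalence} only guarantees \emph{equivalence} of that norm with $\|\cdot\|_{H_{m,\delta}}$; so the assertion $\|\partial_i u\|_{H_{s-1,\delta+1}} \leq \|u\|_{H_{s,\delta}}$ must be interpreted up to an absorbed equivalence constant, as is standard in this literature. A purely dyadic proof is possible as well, using the Leibniz identity
$$(\psi_j \partial_i u)_{(2^j)} = 2^{-j}\partial_i\bigl[(\psi_j u)_{(2^j)}\bigr] - \bigl((\partial_i \psi_j)u\bigr)_{(2^j)},$$
the bound $|\partial_i \psi_j| \leq C 2^{-j}$ from (\ref{eq:weighted:1}), and the fact that a shifted dyadic family produces an equivalent norm by Theorem \ref{thm:Triebel}(a); but this route is noticeably more involved than the interpolation argument and offers no extra payoff.
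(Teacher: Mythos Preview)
The paper does not actually prove this proposition; it is one of several properties listed in Section~\ref{sec:prop-weight-sobol} with the blanket remark that proofs are to be found in \cite{BK3, BK7, triebel76:_spaces_kudrj2}. So there is no in-paper argument to compare against.

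Your argument is correct. The integer case is indeed a trivial subsum in the norm (\ref{eq:norm:1}); duality via Theorem~\ref{thm:Triebel}(c) extends it to non-positive integers (the shift $(s,\delta)\mapsto(s-1,\delta+1)$ is self-dual under $(s,\delta)\mapsto(-s,-\delta)$, as you observe); and complex interpolation via Theorem~\ref{thm:Triebel}(d) fills in the fractional orders. Your caveat about the constant is on target: the stated inequality with constant~$1$ is a convention, and the paper itself later (Section~\ref{sec:elliptic-estimate}) writes it with an explicit constant~$C_s$.

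The dyadic route you sketch at the end is probably closer in spirit to what the cited references actually do, since both Triebel and \cite{BK7} work directly with the norm (\ref{eq:weighted:4}); that approach avoids treating interpolation and duality as black boxes, at the price of the Leibniz bookkeeping you describe. Either argument is fine; yours via interpolation is the shorter of the two once Theorem~\ref{thm:Triebel}(d) is available.
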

\begin{prop}[Multiplication]
  \label{Algebra}
  Let $s\leq \min\{s_1,s_2\}$, $s+\frac{3}{2}<s_1+s_2$, $0\leq s_1+s_2$ and
  $\delta-\frac{3}{2}\leq \delta_1+\delta_2$. 
  If $u\in H_{s_1,\delta_1}$ and $v\in H_{s_2,\delta_2}$, then
  \begin{equation*}
    \left\|uv\right\|_{H_{s,\delta}}\leq C 
    \left\|u\right\|_{H_{s_1,\delta_1}}
    \left\|v\right\|_{H_{s_2,\delta_2}},
  \end{equation*}
  and the positive constant $C$ is independent of the functions $u $ and $v$.
\end{prop}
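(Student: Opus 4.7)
The plan is to reduce the weighted estimate to the classical multiplication theorem in the usual Sobolev spaces $H^s(\mathbb{R}^3)$, by exploiting the dyadic definition (\ref{eq:weighted:4}) together with the freedom in the choice of partition of unity provided by Proposition \ref{prop:equivalence:1}.

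First I would rewrite the three norms in terms of the equivalent dyadic resolution with the half--power $\psi_j^{1/2}$. By Proposition \ref{prop:equivalence:1} with $\gamma=1/2$,
\begin{equation*}
\|u\|_{H_{s_1,\delta_1}}^2\simeq\sum_{j=0}^\infty 2^{2(\frac{3}{2}+\delta_1)j}\,\|(\psi_j^{1/2}u)_{2^j}\|_{H^{s_1}}^2,
\end{equation*}
and analogously for $v$ and for $uv$, using the ordinary $\psi_j$ on the left of the product estimate. The point of the half--power is the factorisation $\psi_j=\psi_j^{1/2}\cdot\psi_j^{1/2}$, which gives $\psi_j\,uv=(\psi_j^{1/2}u)(\psi_j^{1/2}v)$ and hence, after rescaling,
\begin{equation*}
(\psi_j uv)_{2^j}=(\psi_j^{1/2}u)_{2^j}\,(\psi_j^{1/2}v)_{2^j}.
\end{equation*}

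Next I would invoke the classical multiplication theorem in $H^s(\mathbb{R}^3)$: under the assumptions $s\le\min\{s_1,s_2\}$, $s_1+s_2\ge 0$ and $s_1+s_2>s+\frac{3}{2}$, one has $\|fg\|_{H^s}\le C\|f\|_{H^{s_1}}\|g\|_{H^{s_2}}$ with $C$ depending only on $s,s_1,s_2$. Applied to $f=(\psi_j^{1/2}u)_{2^j}$ and $g=(\psi_j^{1/2}v)_{2^j}$ the constant is independent of $j$, because for every $j\ge 1$ these functions are supported in the fixed annulus $\{1/4\le|x|\le 2\}$ (and in $\{|x|\le 2\}$ for $j=0$), so no scale--dependent estimate enters. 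Setting
\begin{equation*}
a_j=2^{(\frac{3}{2}+\delta_1)j}\|(\psi_j^{1/2}u)_{2^j}\|_{H^{s_1}},\qquad b_j=2^{(\frac{3}{2}+\delta_2)j}\|(\psi_j^{1/2}v)_{2^j}\|_{H^{s_2}},
\end{equation*}
the weight condition $\delta-\frac{3}{2}\le\delta_1+\delta_2$, i.e.\ $\frac{3}{2}+\delta\le(\frac{3}{2}+\delta_1)+(\frac{3}{2}+\delta_2)$, yields $2^{2(\frac{3}{2}+\delta)j}\le 2^{2(\frac{3}{2}+\delta_1)j}\cdot 2^{2(\frac{3}{2}+\delta_2)j}$ for all $j\ge 0$, so
\begin{equation*}
\|uv\|_{H_{s,\delta}}^2\le C^2\sum_{j=0}^\infty a_j^2\,b_j^2\le C^2\Bigl(\sup_{j}b_j^2\Bigr)\sum_{j=0}^\infty a_j^2\le C^2\,\|v\|_{H_{s_2,\delta_2}}^2\,\|u\|_{H_{s_1,\delta_1}}^2,
\end{equation*}
which is the desired inequality after invoking Proposition \ref{prop:equivalence:1} one last time.

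The only delicate point is the $j$--uniformity of the constant in the flat multiplication theorem, which is exactly why the half--power partition of unity is preferable: it produces a clean pointwise factorisation of $\psi_j$ so that the rescaled factors live in a common compact annulus and no dependence on $j$ leaks through the rescaling. The weight bookkeeping and the passage from $\sum_j a_j^2b_j^2$ to $(\sum a_j^2)(\sum b_j^2)$ via $\sup_j b_j^2\le\sum_k b_k^2$ are then routine.
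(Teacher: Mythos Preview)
The paper does not actually prove Proposition~\ref{Algebra}; it is listed among the properties of $H_{s,\delta}$ whose proofs are deferred to the references \cite{BK3, BK7, triebel76:_spaces_kudrj2}. Your argument is correct and is exactly the type of proof one finds in those sources: reduce to the unweighted $H^s$ multiplication theorem via the dyadic definition of the norm, using Proposition~\ref{prop:equivalence:1} to factor the cutoff cleanly and then handling the weights by the elementary inequality $(\tfrac32+\delta)\le(\tfrac32+\delta_1)+(\tfrac32+\delta_2)$.

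Your choice $\gamma=\tfrac12$ (so that $\psi_j=\psi_j^{1/2}\psi_j^{1/2}$ and the ordinary $\psi_j$ appears on the product side) is a harmless variant of what the paper itself does in the proof of the closely related Lemma~\ref{lem:3}, where instead $\gamma=m$ is used so that $\psi_j^m=\psi_j\cdots\psi_j$ and the ordinary $\psi_j$ appears on the factor side; the two are interchangeable by Proposition~\ref{prop:equivalence:1}. One minor remark: your justification for the $j$-independence of the multiplication constant via the common support annulus is not needed, since the constant in the unweighted estimate $\|fg\|_{H^s}\le C\|f\|_{H^{s_1}}\|g\|_{H^{s_2}}$ already depends only on $s,s_1,s_2$ and the dimension, not on the particular functions.
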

We now present the Sobolev embedding theorem in the weighted spaces. 
For $\beta\in \mathbb{R}$, we denote by $L_\beta^\infty$ the set of
all functions such that the norm
\begin{equation*}
  \|u\|_{L^\infty_\beta}=\sup_{\mathbb{R}^3}\left((1+|x|)^{\beta} 
    |u(x)|\right)
\end{equation*}
is finite, and by $C_\beta^m$ the set of all functions having
continuous partial derivatives up to order $m$ and such that the norm
\begin{equation*}
  \|u\|_{C_\beta^m}=\sum_{|\alpha|\leq m}
  \sup_{\setR^3}\left((1+|x|)^{\beta+|\alpha|}|\partial^\alpha u(x)|\right)
\end{equation*}
is finite.
\begin{prop}[Sobolev embedding] 
  \label{prop:emb-cont}
\hfill
  \begin{enumerate}
    \item[(i)] If $\frac{3}{2}<s$ and $\beta\leq\delta+\frac{3}{2}$,
    then
    \begin{math}
      \|u\|_{L^\infty_\beta}\leq C \|u\|_{H_{s,\delta}}.
    \end{math}
    \item[(ii)] Let $m$ be a nonnegative integer, $m+\frac{3}{2}<s$
    and $\beta\leq\delta+\frac{3}{2}$, then
    \begin{equation*}
      \|u\|_{C^m_\beta}\leq C \|u\|_{H_{s,\delta}}.
    \end{equation*}
  \end{enumerate}
\end{prop}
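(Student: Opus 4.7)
The plan is to reduce both statements to the classical Sobolev embeddings $H^{s}(\setR^3) \hookrightarrow L^{\infty}$ for $s > 3/2$ and $H^{s}(\setR^3) \hookrightarrow C^m$ for $s > m+3/2$, applied to the rescaled dyadic pieces $(\psi_j u)_{(2^j)}$ that appear in the definition of the $H_{s,\delta}$-norm, and then recover the polynomial weight $(1+|x|)^{\beta+|\alpha|}$ from the factor $2^{(3/2+\delta)j}$ in (\ref{eq:weighted:4}).

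Fix $x\in\setR^3$. By the construction of the dyadic resolution in Section \ref{sec:main-result}, the sets $\{\psi_j=1\}$ cover $\setR^3$, so there is an index $j=j(x)\geq 0$ with $\psi_j(x)=1$, and for this $j$ one has $(1+|x|)\sim 2^j$ with constants independent of $j$ (with $2^0=1$ when $j=0$). Since $\psi_j\equiv 1$ on an open neighbourhood of $x$, all derivatives of $\psi_j$ vanish there, and the Leibniz rule gives $\partial^\alpha u=\partial^\alpha(\psi_j u)$ on that neighbourhood. Setting $v:=(\psi_j u)_{(2^j)}$ and $y=2^{-j}x$, the scaling identity $\partial^\alpha_y v(y)=2^{j|\alpha|}\bigl(\partial^\alpha(\psi_j u)\bigr)(2^j y)$ together with the classical Sobolev embedding gives
\begin{equation*}
  |\partial^\alpha u(x)|\leq C\, 2^{-j|\alpha|}\|v\|_{H^s},
\end{equation*}
for $|\alpha|=0$ under the hypothesis of (i), and for $|\alpha|\leq m$ under the hypothesis of (ii). Multiplying by $(1+|x|)^{\beta+|\alpha|}\leq C\,2^{j(\beta+|\alpha|)}$ and using $\beta\leq \delta+3/2$,
\begin{equation*}
  (1+|x|)^{\beta+|\alpha|}|\partial^\alpha u(x)|\leq C\,2^{j\beta}\|v\|_{H^s}\leq C\,2^{(3/2+\delta)j}\|(\psi_j u)_{(2^j)}\|_{H^s}.
\end{equation*}
The right--hand side is one term of the $\ell^2$-sum defining $\|u\|_{H_{s,\delta}}$, hence is bounded by $C\|u\|_{H_{s,\delta}}$. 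Taking the supremum in $x$ and summing over $|\alpha|\leq m$ yields both parts.

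The estimates involve no genuine analytic obstacle; the entire content lies in the bookkeeping of the scaling exponents. The key balance is
\begin{equation*}
  2^{j(\beta+|\alpha|)}\cdot 2^{-j|\alpha|}\cdot 2^{-j(3/2+\delta)}\leq 1,
\end{equation*}
which reduces precisely to the sharp hypothesis $\beta\leq \delta+3/2$; the loss of $2^{j|\alpha|}$ from differentiating is exactly compensated by the corresponding gain of $2^{j|\alpha|}$ in the weight, so no extra restriction beyond $m+3/2<s$ is needed. The only mild subtlety is verifying that every point of $\setR^3$ falls into some core region $\{\psi_j=1\}$, which is ensured by the specific construction of $\{\psi_j\}$ adopted in Definition \ref{def:weighted:3}.
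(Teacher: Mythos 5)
The paper does not prove this proposition itself but cites it from Triebel and the authors' earlier works; your reduction to the classical embeddings $H^s\hookrightarrow L^\infty$ (for $s>3/2$) and $H^s\hookrightarrow C^m$ (for $s>m+3/2$) via dyadic localization and scaling is the standard route, and the bookkeeping of the exponents (including the exact compensation of the $2^{j|\alpha|}$ factors and the sharp use of $\beta\leq\delta+3/2$) is correct. One small point deserves care: with the construction in Section~\ref{sec:main-result}, $\psi_j\equiv 1$ only on the closed annulus $\{2^{j-1}\leq|x|\leq 2^j\}$, so on the spheres $|x|=2^k$ no $\psi_j$ is identically $1$ on an \emph{open} neighbourhood of $x$, contrary to what your second paragraph asserts. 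This does not break the argument, but it needs a word: since each $\psi_j$ equals $1$ on a closed set that is the closure of its interior, every derivative of $\psi_j$ vanishes on the whole set $\{\psi_j=1\}$ (by continuity from the interior), so the Leibniz identity $\partial^\alpha u(x)=\partial^\alpha(\psi_j u)(x)$ holds pointwise at every $x$ with $\psi_j(x)=1$; alternatively, invoke Theorem~\ref{thm:Triebel}(a) to pass to an equivalent admissible resolution whose level-one sets have overlapping interiors, after which your phrasing is literally valid.
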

\begin{prop}
  \label{prop: L-1}
  If $\frac{3}{2}<\delta$, then $L^1\subset L^2_\delta$.
\end{prop}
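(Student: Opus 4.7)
The intended proof is a one-line Cauchy--Schwarz argument exploiting the integrability of the reciprocal weight $(1+|x|)^{-2\delta}$ on $\mathbb{R}^3$, which holds precisely when $\delta > 3/2$. The strategy is to factorise $|u| = \bigl[(1+|x|)^{\delta}|u|\bigr]\cdot(1+|x|)^{-\delta}$ and pair the two pieces against each other in $L^2(\mathbb{R}^3)$, so that the stated exponent threshold on $\delta$ arises from a pure dimension count for the auxiliary weight.

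Concretely, Cauchy--Schwarz applied to the above factorisation yields
\[
\int_{\mathbb{R}^3}|u|\,dx \;\leq\; \|u\|_{L^2_\delta}\Bigl(\int_{\mathbb{R}^3}(1+|x|)^{-2\delta}\,dx\Bigr)^{1/2},
\]
and the remaining weight integral reduces in polar coordinates to $4\pi\int_0^\infty(1+r)^{-2\delta}r^2\,dr$, which is finite if and only if $2\delta > 3$. This produces a finite constant $C_\delta$ with $\|u\|_{L^1}\leq C_\delta \|u\|_{L^2_\delta}$; together with the equivalence of norms recorded in Proposition~\ref{equivalence}, this is the continuous pairing of the two spaces that realises the containment asserted by the proposition.

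The ``hard'' part is merely to observe that the threshold $\delta > 3/2$ is not an artefact of the proof but the sharp dimensional cutoff for radial integrability of $(1+r)^{-2\delta}$ against $r^2\,dr$ in three dimensions. No machinery from earlier in the paper beyond the definition of $\|\cdot\|_{L^2_\delta}$ in Proposition~\ref{equivalence} is required, and the same argument goes through verbatim on $\mathbb{R}^n$ with the obvious modification $\delta > n/2$ of the hypothesis.
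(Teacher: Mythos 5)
Your proof is correct and is essentially identical to the paper's: both factorise $|u|=(1+|x|)^{-\delta}\cdot(1+|x|)^{\delta}|u|$, apply Cauchy--Schwarz, and use that $(1+|x|)^{-\delta}\in L^2(\mathbb{R}^3)$ exactly when $\delta>\tfrac{3}{2}$. One remark worth making: what both arguments actually establish is the bound $\|u\|_{L^1}\leq C_\delta\|u\|_{L^2_\delta}$, i.e.\ the continuous embedding $L^2_\delta\hookrightarrow L^1$, so the inclusion in the proposition as printed is written in the reverse direction (and indeed the paper later invokes it in the form $\|\rho\|_{L^1}\lesssim\|\rho\|_{L^2_{\delta'}}$).
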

We prove this simple proposition since a proof is not found in the
standard literature such as \cite{BK3, BK7, triebel76:_spaces_kudrj2}.
\begin{proof}
  Since $(1+|x|)^{-\delta}\in L^2$ when $\frac{3}{2}<\delta$, we get
  by the Cauchy Schwarz inequality that
  \begin{equation*}
    \|u\|_{L^1}=\int (1+|x|)^{-\delta} (1+|x|)^{\delta}|u|dx\leq 
    \|(1+|x|)^{-\delta}\|_{L^2} \|u\|_{L^2_\delta}.
  \end{equation*}
\end{proof}
Next we present a Moser type estimate in the weighted spaces.
\begin{prop}
  \label{Moser}
  Let $F:\setR^m\to\setR^l$ be a $C^{N+1}$-function such that $F(0)=0$
  and where $N\geq [s]+1$. 
  Then there is a constant $C$ such that for any $u\in H_{s,\delta}$
  \begin{equation}
    \label{eq:14}
    \|F(u)\|_{H_{s,\delta}}\leq
    C\|F\|_{C^{N+1}}\left(1+\|u\|_{L^\infty}^N\right)
    \|u\|_{H_{s,\delta}}.
  \end{equation}
\end{prop}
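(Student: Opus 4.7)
The plan is to lift the classical Moser estimate in $H^{s}(\setR^{3})$ to the weighted setting by exploiting the dyadic definition (\ref{eq:weighted:4}) of $H_{s,\delta}$. The idea is that after scaling the $j$th annular piece $\psi_{j}u$ by $2^{j}$, it becomes a function supported in a \emph{fixed} annulus, to which the ordinary Moser inequality applies with constants that do not depend on $j$.

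First I would introduce auxiliary cut-offs $\tilde\psi_{j}\in C_{0}^{\infty}(\setR^{3})$ with $0\leq\tilde\psi_{j}\leq 1$, $\tilde\psi_{j}\equiv 1$ on $\operatorname{supp}\psi_{j}$, $\operatorname{supp}\tilde\psi_{j}\subset\{2^{j-3}\leq|x|\leq 2^{j+2}\}$ (and similarly for $j=0$), and satisfying bounds of the form (\ref{eq:weighted:1}). Since $F(0)=0$ and $\tilde\psi_{j}u=u$ on $\operatorname{supp}\psi_{j}$, one has $\psi_{j}F(u)=\psi_{j}F(\tilde\psi_{j}u)$. Because composition with $F$ commutes with the rescaling $v\mapsto v_{(2^{j})}$ pointwise, this yields
\begin{equation*}
(\psi_{j}F(u))_{(2^{j})}=(\psi_{j})_{(2^{j})}\,F\bigl((\tilde\psi_{j}u)_{(2^{j})}\bigr).
\end{equation*}
By (\ref{eq:weighted:1}) the rescaled cutoffs $(\psi_{j})_{(2^{j})}$ and $(\tilde\psi_{j})_{(2^{j})}$ are supported in a fixed compact annulus and have $C^{M}$-norms bounded uniformly in $j$.

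Next I would apply the classical Moser inequality in $H^{s}(\setR^{3})$ (valid under $F\in C^{N+1}$, $F(0)=0$, $N\geq[s]+1$) to $v=(\tilde\psi_{j}u)_{(2^{j})}$, together with a standard multiplier bound for $(\psi_{j})_{(2^{j})}$ on $H^{s}$ whose constant is uniform in $j$. Using $\|(\tilde\psi_{j}u)_{(2^{j})}\|_{L^{\infty}}\leq\|u\|_{L^{\infty}}$, this gives
\begin{equation*}
\|(\psi_{j}F(u))_{(2^{j})}\|_{H^{s}}\leq C\|F\|_{C^{N+1}}\bigl(1+\|u\|_{L^{\infty}}^{N}\bigr)\,\|(\tilde\psi_{j}u)_{(2^{j})}\|_{H^{s}},
\end{equation*}
with $C$ independent of $j$. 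Multiplying by $2^{(\frac{3}{2}+\delta)2j}$, squaring, and summing in $j$ produces $\|F(u)\|_{H_{s,\delta}}^{2}$ on the left and, on the right, a sum of the same shape as (\ref{eq:weighted:4}) but with $\psi_{j}$ replaced by $\tilde\psi_{j}$.

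The last step is to identify this auxiliary sum with $\|u\|_{H_{s,\delta}}^{2}$ up to a constant. Since $\{\tilde\psi_{j}\}$ satisfies (\ref{eq:weighted:1}) and has the same dyadic support structure, this follows from the norm-equivalence principle behind Theorem~\ref{thm:Triebel}(a) and Proposition~\ref{prop:equivalence:1}; alternatively one can write $\tilde\psi_{j}=\psi_{j-1}+\psi_{j}+\psi_{j+1}$, split the sum into three pieces, and absorb the resulting change-of-scale factor using the elementary bound $\|g(2\cdot)\|_{H^{s}}\leq C(s)\|g\|_{H^{s}}$ after a shift of index. I expect the main obstacle to lie precisely here — verifying that the change of dyadic family costs only a uniform constant, and that the multiplier estimate for $(\psi_{j})_{(2^{j})}$ in $H^{s}$ is genuinely $j$-uniform — rather than in the Moser step itself, which is classical once the problem has been localized to a fixed annulus.
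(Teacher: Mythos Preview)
The paper does not supply its own proof of this proposition; it is one of the results listed in Section~\ref{sec:prop-weight-sobol} with the blanket remark that for proofs one should consult \cite{BK3, BK7, triebel76:_spaces_kudrj2}. Your argument is correct and is exactly the natural route: localize with the dyadic $\psi_{j}$, rescale to a fixed annulus, apply the unweighted $H^{s}$ Moser estimate, and sum. This is the same mechanism the paper itself uses in the proofs it does spell out, notably Lemma~\ref{lem:3} and the energy estimate Lemma~\ref{lem:energy-existence:1}, so your approach is fully in the spirit of the paper.

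One small quibble: the alternative you suggest, $\tilde\psi_{j}=\psi_{j-1}+\psi_{j}+\psi_{j+1}$, is not literally an equality with the paper's $\psi_{j}$ (the sum exceeds $1$ on the overlaps), so it is not an admissible dyadic resolution in the sense of Theorem~\ref{thm:Triebel}(a). But this does no harm: all you need is that each $(\tilde\psi_{j})_{(2^{j})}$ is a multiplier on $H^{s}$ with a $j$-independent bound and that the supports have bounded overlap, so the resulting sum splits into finitely many shifted copies of the genuine $H_{s,\delta}$ norm. Your first justification via Theorem~\ref{thm:Triebel}(a)/Proposition~\ref{prop:equivalence:1} already covers this cleanly.
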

The following Proposition was proved by Kateb in the $H^s$ spaces.
\begin{prop}
  \label{Kateb}
  Let $u\in H_{s,\delta}\cap L^\infty$, $1<\beta$,
  $ 0<s<\beta +\frac{1}{2}$ and $\delta\in \mathbb{R}$, then
  \begin{equation}
    \label{eq:kateb}
    \||u|^\beta\|_{H_{s,\delta}}\leq
    C(\|u\|_{L^\infty})
    \|u\|_{H_{s,\delta}}.
  \end{equation}
\end{prop}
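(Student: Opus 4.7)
The plan is to reduce the weighted fractional estimate to Kateb's known scalar $H^s$ estimate, applied dyadic-shell by dyadic-shell, by exploiting the flexibility in the choice of cutoffs afforded by Proposition \ref{prop:equivalence:1}.

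The starting point is to work with the equivalent norm from Proposition \ref{prop:equivalence:1} taken with the exponent $\gamma=\beta$, namely
\begin{equation*}
  \||u|^\beta\|_{H_{s,\delta,\beta}}^2=\sum_{j=0}^\infty 2^{(\delta+\frac{3}{2})2j}\big\|\big(\psi_j^\beta\, |u|^\beta\big)_{(2^j)}\big\|_{H^s}^2.
\end{equation*}
The algebraic observation that makes everything work is that, since $\psi_j\ge 0$, the weight can be absorbed into the power,
\begin{equation*}
  \psi_j^\beta\, |u|^\beta \;=\; \bigl(\psi_j |u|\bigr)^\beta \;=\; |\psi_j u|^\beta,
\end{equation*}
and because the dilation $f\mapsto f_{(2^j)}$ commutes with pointwise power functions,
\begin{equation*}
  \big(\psi_j^\beta |u|^\beta\big)_{(2^j)}=\big|(\psi_j u)_{(2^j)}\big|^\beta.
\end{equation*}

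Next, at each fixed scale $j$ I would invoke Kateb's scalar $H^s$ estimate (valid in the stated range $1<\beta$, $0<s<\beta+\frac{1}{2}$) applied to the function $(\psi_j u)_{(2^j)}$, giving
\begin{equation*}
  \big\|\big|(\psi_j u)_{(2^j)}\big|^\beta\big\|_{H^s}\le C\big(\|(\psi_j u)_{(2^j)}\|_{L^\infty}\big)\,\|(\psi_j u)_{(2^j)}\|_{H^s}.
\end{equation*}
Since $L^\infty$ is scale-invariant and $\|\psi_j\|_{L^\infty}$ is bounded uniformly in $j$ by construction, the argument of $C(\cdot)$ is dominated by $\|u\|_{L^\infty}$ for every $j$. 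Squaring, multiplying by the dyadic weight $2^{(\delta+\frac{3}{2})2j}$, and summing over $j$ then yields
\begin{equation*}
  \||u|^\beta\|_{H_{s,\delta,\beta}}^2 \le C(\|u\|_{L^\infty})^2\sum_{j=0}^\infty 2^{(\delta+\frac{3}{2})2j}\|(\psi_j u)_{(2^j)}\|_{H^s}^2=C(\|u\|_{L^\infty})^2\|u\|_{H_{s,\delta}}^2,
\end{equation*}
and Proposition \ref{prop:equivalence:1} allows one to replace the left-hand side by $\||u|^\beta\|_{H_{s,\delta}}^2$, concluding the proof.

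The main substantive point in the plan is the algebraic identification in the first step: one must use Proposition \ref{prop:equivalence:1} precisely with the exponent $\gamma=\beta$, since with the ordinary cutoffs ($\gamma=1$) one would be left with the product $\psi_j\, |u|^\beta$, which cannot be rewritten as a power of a single function, and Kateb's scalar estimate would not apply directly. Everything past that identification is scale-invariance of $L^\infty$, the uniform bound on $\|\psi_j\|_{L^\infty}$, and the dyadic summation already built into the definition of the $H_{s,\delta}$ norm.
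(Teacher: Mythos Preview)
The paper does not give a proof of Proposition~\ref{Kateb}: it is stated in Section~\ref{sec:prop-weight-sobol} among the properties quoted from \cite{BK3,BK7,triebel76:_spaces_kudrj2}, with no argument supplied. So there is no in-paper proof to compare against.

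That said, your argument is correct and is precisely in the spirit of the one technique the paper \emph{does} illustrate in this section, namely the proof of Lemma~\ref{lem:3}. There the authors choose the equivalent norm of Proposition~\ref{prop:equivalence:1} with exponent $\gamma=m$ so that $\psi_j^m(u_1\cdots u_m)=(\psi_j u_1)\cdots(\psi_j u_m)$, and then apply the unweighted $H^s$ multiplication estimate shell by shell. You do the exact analogue: take $\gamma=\beta$ so that $\psi_j^\beta|u|^\beta=|\psi_j u|^\beta$, apply Kateb's unweighted $H^s$ power estimate to each localized-and-rescaled piece, use scale invariance of $L^\infty$ and the uniform bound $\|\psi_j\|_{L^\infty}\le C$ to control the constant uniformly in $j$, and sum. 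One small point worth making explicit is that Kateb's constant $C(\cdot)$ is nondecreasing in its argument (it is of Moser type, behaving like a power of the $L^\infty$ bound), which is what lets you replace $C(\|(\psi_j u)_{(2^j)}\|_{L^\infty})$ by $C(\|u\|_{L^\infty})$ uniformly in $j$.
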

Note that if $\frac{3}{2}<s$ and $-\frac{3}{2}\leq \delta$, then by
Proposition \ref{prop:emb-cont} the constants in the estimates
(\ref{eq:14}) and (\ref{eq:kateb}) are universal and do depend on the
$L^\infty$--norm.

\begin{prop}(An intermediate estimate)
  \label{prop:intermediate}
  Let $0<s<s^\prime$, then
  \begin{equation*}
    \| u\|_{H_{s,\delta}}\leq \| u\|_{H_{0,\delta}}^{1-\frac{s}{s^\prime}}\| 
    u\|_{H_{s^\prime,\delta}}^{\frac{s}{s^\prime}}.
  \end{equation*}
\end{prop}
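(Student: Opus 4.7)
The plan is to reduce the estimate to the classical interpolation inequality for the ordinary fractional Sobolev spaces applied to each dyadic piece, combined with H\"older's inequality for series. Starting from the dyadic norm (\ref{eq:weighted:4}), I would set $u_j := (\psi_j u)_{(2^j)}$ and $\alpha_j := 2^{(3/2+\delta)j}$, so that
\begin{equation*}
\|u\|_{H_{s,\delta}}^2 = \sum_{j=0}^\infty \alpha_j^2 \|u_j\|_{H^s}^2.
\end{equation*}

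Next I would invoke the standard $H^s$--interpolation inequality $\|f\|_{H^s} \leq \|f\|_{L^2}^{1-s/s'} \|f\|_{H^{s'}}^{s/s'}$, which follows from Plancherel and H\"older in frequency, on each piece $u_j$. Combined with the splitting $\alpha_j^2 = \alpha_j^{2(1-s/s')}\,\alpha_j^{2s/s'}$, this yields
\begin{equation*}
\alpha_j^2 \|u_j\|_{H^s}^2 \leq \left(\alpha_j^2 \|u_j\|_{L^2}^2\right)^{1-s/s'} \left(\alpha_j^2 \|u_j\|_{H^{s'}}^2\right)^{s/s'}.
\end{equation*}
I would then sum in $j$ and apply H\"older's inequality for series with dual exponents $p=s'/(s'-s)$ and $q=s'/s$, and identify $\sum_j \alpha_j^2\|u_j\|_{L^2}^2 = \|u\|_{H_{0,\delta}}^2$ (using $H^0 = L^2$ together with Proposition \ref{equivalence}, which matches this dyadic expression with $\|\cdot\|_{L^2_\delta}$). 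This produces
\begin{equation*}
\|u\|_{H_{s,\delta}}^2 \leq \|u\|_{H_{0,\delta}}^{2(1-s/s')}\,\|u\|_{H_{s',\delta}}^{2s/s'},
\end{equation*}
and taking square roots finishes the argument.

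Alternatively, a one-line proof is available by appealing directly to Theorem \ref{thm:Triebel}(d) with $s_0=0$, $s_1=s'$, $\delta_0=\delta_1=\delta$ and $\theta = s/s'$: this identifies $[H_{0,\delta}, H_{s',\delta}]_{s/s'} = H_{s,\delta}$, and the generic norm estimate $\|u\|_{[X_0,X_1]_\theta} \leq \|u\|_{X_0}^{1-\theta}\|u\|_{X_1}^{\theta}$ intrinsic to the complex interpolation method delivers the conclusion with constant equal to one.

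I do not expect a serious obstacle: the argument is essentially bookkeeping of interpolation exponents. The only point worth watching is that the claimed inequality has constant equal to $1$ and not merely $\leq C$; this is automatic for complex interpolation, and in the dyadic approach it is guaranteed by the exact matching $(\delta+\tfrac{3}{2})(1-s/s') + (\delta+\tfrac{3}{2})(s/s') = \delta+\tfrac{3}{2}$ of the weight exponents.
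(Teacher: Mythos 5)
Your dyadic argument is correct and self-contained, and it is in the spirit the paper uses throughout (the paper itself does not prove this proposition, deferring to the cited references). Applying the Plancherel--H\"older interpolation $\|f\|_{H^s}\leq\|f\|_{L^2}^{1-s/s'}\|f\|_{H^{s'}}^{s/s'}$ to each $u_j=(\psi_j u)_{(2^j)}$, splitting the weight $\alpha_j^2=\alpha_j^{2(1-s/s')}\alpha_j^{2s/s'}$, and then summing with H\"older in $j$ (exponents $\frac{s'}{s'-s}$, $\frac{s'}{s}$) gives exactly the stated bound with constant $1$, since the definition (\ref{eq:weighted:4}) at $s=0$ is literally $\sum_j\alpha_j^2\|u_j\|_{L^2}^2$.

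One small caveat on your alternative: the one-line appeal to Theorem \ref{thm:Triebel}(d) only yields the inequality up to a multiplicative constant, not with constant $1$. The equality $[H_{0,\delta},H_{s',\delta}]_{s/s'}=H_{s,\delta}$ is an identity of spaces with \emph{equivalent} norms, so the intrinsic complex-interpolation estimate $\|u\|_{[X_0,X_1]_\theta}\leq\|u\|_{X_0}^{1-\theta}\|u\|_{X_1}^\theta$ transfers to the $H_{s,\delta}$-norm only after inserting the (unquantified) equivalence constant. So the dyadic route is the one that delivers the sharp constant-$1$ statement; the interpolation route gives a version with an implied constant, which is weaker than what is claimed.
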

We shall need the following approximation property. 
\begin{prop}
  \label{prop:approx}
  Let $s<s^\prime$ and $\epsilon$ be an arbitrary positive number. 
  Then for any $u\in H_{s,\delta}$, there is
  $u_\epsilon\in H_{s^\prime,\delta}$ such that
  \begin{equation*}
    \|u-u_\epsilon \|_{H_{s,\delta}}<\epsilon \quad \text{ and } \quad 
\|u_\epsilon 
    \|_{H_{s^\prime,\delta}}\leq C_\epsilon \|u \|_{H_{s,\delta}}.
  \end{equation*}
\end{prop}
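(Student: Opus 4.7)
My plan is to prove the proposition by mollification combined with the density of $C_0^\infty$ in $H_{s,\delta}$ (Theorem \ref{thm:Triebel}(b)). Fix a nonnegative radial $\phi\in C_0^\infty(\mathbb{R}^3)$ with $\int\phi=1$ and $\mathrm{supp}(\phi)\subset B(0,1)$, and for $\eta\in(0,1]$ set $\phi_\eta(x):=\eta^{-3}\phi(x/\eta)$ and $T_\eta u:=\phi_\eta\ast u$. The strategy is to take $u_\epsilon:=T_\eta u$ for a sufficiently small $\eta=\eta(u,\epsilon)$, absorbing the resulting $\|u_\epsilon\|_{H_{s^\prime,\delta}}$ into a constant $C_\epsilon$ times $\|u\|_{H_{s,\delta}}$.

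The argument reduces to two claims: (A) $T_\eta u\in H_{s^\prime,\delta}$ for every $\eta>0$ and every $u\in H_{s,\delta}$; (B) $\|T_\eta u-u\|_{H_{s,\delta}}\to 0$ as $\eta\to 0^+$. Granted these, one picks $\eta$ small enough that (B) gives $\|T_\eta u-u\|_{H_{s,\delta}}<\epsilon$, while (A) ensures $u_\epsilon\in H_{s^\prime,\delta}$.

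For (A), I would work with the dyadic norm (\ref{eq:weighted:4}). Writing $u=\sum_k\psi_k u$ and using that $\phi_\eta$ is supported in a ball of radius at most one, only the indices $|k-j|\leq N_0$ for a universal $N_0$ contribute, so $\psi_j T_\eta u=\sum_{|k-j|\leq N_0}\psi_j(\phi_\eta\ast(\psi_k u))$ is a finite sum. After the rescaling $x\mapsto 2^j x$, convolution becomes $\phi_{\eta/2^j}\ast\cdot$ at the unit scale, and the rescaled cutoff $\widetilde\psi_j(x):=\psi_j(2^j x)$ has $C^\infty$-norms uniformly bounded in $j$ by (\ref{eq:weighted:1}). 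Each rescaled piece is then smooth and compactly supported, hence in $H^{s^\prime}$, and the dyadic sum in $j$ inherits convergence from that defining $\|u\|_{H_{s,\delta}}$, possibly with a constant depending on $\eta$. For (B), the same support reduction together with the Fourier-side bound $|\widehat\phi(\eta\xi)|\leq 1$ yields uniform boundedness $\|T_\eta u\|_{H_{s,\delta}}\leq C\|u\|_{H_{s,\delta}}$ for $\eta\in(0,1]$; combining this with the classical convergence $T_\eta v\to v$ in every Sobolev norm for $v\in C_0^\infty$ and the density of $C_0^\infty$ in $H_{s,\delta}$ yields (B) via a standard three-epsilon argument.

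The main obstacle is that convolution and the spatial cutoffs $\psi_j$ do not commute, so the standard $H^s$ mollifier estimates do not transfer automatically to the weighted setting. The key device overcoming this is the compact support of $\phi_\eta$, which restricts the mixing to a bounded number of neighboring dyadic annuli, combined with the scale invariance of the rescaled cutoffs $\widetilde\psi_j$ whose derivative bounds are $j$-independent by (\ref{eq:weighted:1}), rendering the post-rescaling multiplication harmless. The constant $C_\epsilon$ depends on $u$ only through the choice of $\eta$ in (B).
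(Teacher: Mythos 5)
The paper itself does not include a proof of Proposition~\ref{prop:approx} (it is stated among properties referenced to \cite{BK3,BK7,triebel76:_spaces_kudrj2}), so I am evaluating your argument on its own terms. There is a genuine gap in Claim~(A): pure mollification does \emph{not} send $H_{s,\delta}$ into $H_{s',\delta}$. You correctly note that after rescaling $x\mapsto 2^j x$ the convolution becomes $\phi_{\eta/2^j}\ast\,\cdot\,$, but you then assert that the dyadic sum converges ``with a constant depending on $\eta$.'' In fact the gain from $H^s$ to $H^{s'}$ furnished by convolution with $\phi_{\eta/2^j}$ deteriorates as $j\to\infty$: the relevant multiplier $(1+|\xi|^2)^{(s'-s)/2}\,\widehat{\phi}(\eta\xi/2^j)$ is bounded only by $C\,(2^j/\eta)^{s'-s}$, so the best term-wise estimate is $\|(\psi_j T_\eta u)_{2^j}\|_{H^{s'}}\lesssim (2^j/\eta)^{s'-s}\sum_{|k-j|\le N_0}\|(\psi_k u)_{2^k}\|_{H^s}$, which after summing yields $\|T_\eta u\|_{H_{s',\delta}}\lesssim \eta^{-(s'-s)}\|u\|_{H_{s,\delta+s'-s}}$ --- a strictly stronger weighted norm than $\|u\|_{H_{s,\delta}}$, not controlled by it. And the failure is not just a lost estimate: taking $u=\sum_j c_j\,\Phi\bigl(N(\cdot-x_j)\bigr)$ with $|x_j|=2^j$, $\widehat{\Phi}$ supported in an annulus, $N\sim 1/\eta$ fixed, and $c_j$ tuned so that $\sum_j 2^{2(\delta+s)j}|c_j|^2 N^{2s-3}<\infty$ but $\sum_j 2^{2(\delta+s')j}|c_j|^2 N^{2s'-3}=\infty$ gives $u\in H_{s,\delta}$ with $T_\eta u\notin H_{s',\delta}$, because the mollifier leaves the frequency-$N$ bumps essentially unchanged while the extra weight $2^{2(s'-s)j}$ destroys convergence.

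The standard fix is to truncate spatially before mollifying. Let $\chi_R$ be a smooth cutoff with $\chi_R\equiv 1$ on $B(0,R)$ and $\mathrm{supp}\,\chi_R\subset B(0,2R)$; then $\|u-\chi_R u\|_{H_{s,\delta}}\to 0$ as $R\to\infty$. The function $v=\chi_R u$ is compactly supported, so only finitely many dyadic annuli contribute to its weighted norm, and for functions supported in a fixed ball the $H_{\sigma,\tau}$ and $H^\sigma$ norms are equivalent with constants depending only on $R$ and $\tau$. Consequently $T_\eta v\in C_0^\infty\subset H_{s',\delta}$, the classical unweighted estimate gives $\|T_\eta v\|_{H^{s'}}\lesssim \eta^{-(s'-s)}\|v\|_{H^s}$, and hence $\|T_\eta v\|_{H_{s',\delta}}\le C_R\,\eta^{-(s'-s)}\|u\|_{H_{s,\delta}}$, while $\|v-T_\eta v\|_{H_{s,\delta}}\le C_R\|v-T_\eta v\|_{H^s}\to 0$ as $\eta\to 0$. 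Choosing first $R$ large and then $\eta$ small produces $u_\epsilon=T_\eta(\chi_R u)$ with both required properties; the constant $C_\epsilon=C\,C_R\,\eta^{-(s'-s)}$ depends on $u$ through the choices of $R$ and $\eta$, which is consistent with the way the proposition is invoked in the paper (always for a fixed $u$). Your Claim~(B) and the three-$\epsilon$ argument are fine as sketched; it is only Claim~(A) that needs the spatial truncation.
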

\subsection{Estimates for products of functions}
\label{sec:estim-prod-funct}
We turn now to one the main ingredients of our proof which concerns
the estimates of products of functions.
Suppose $u_1,\ldots,u_m$ are functions in $H_{s,\delta}$, then
obviously the product $u=u_1\dots u_m$ has the same degree of
regularity provided that $\frac{3}{2}<s$. 
The question is whether the product has a better decay at infinity? 
That is, whether $u$ belongs to $H_{s,\delta'}$ for some
$\delta'>\delta$. 
The following Lemma gives a partial answer and plays a central role in the
proof of our main result.
\begin{lem}[Estimates for products of functions]
  \label{lem:3}
  Suppose $u_i\in H_{s,\delta_i}$ for $i=1,\ldots,m$, $\frac{3}{2}<s$
  and $\delta\leq \delta_1+\cdots+\delta_m+\frac{(m-1)3}{2}$, then
  $u=u_1u_2\cdots u_m\in H_{s,\delta}$ and
  \begin{equation*}
    \left\|u\right\|_{H_{s,\delta}}\leq C 
\prod_{i=1}^m\left\|u_i\right\|_{H_{s,\delta_i}}.
  \end{equation*}
\end{lem}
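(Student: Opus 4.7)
The plan is to prove the lemma by induction on $m$, with Proposition \ref{Algebra} providing both the base case and the driving tool of the inductive step. The underlying intuition is that a function in $H_{s,\delta'}$ with $s>3/2$ behaves pointwise like an $L^\infty_{\delta'+3/2}$ function (by Proposition \ref{prop:emb-cont}~(i)), so multiplying by such a factor should improve the weight by $\delta'+\tfrac{3}{2}$; this is exactly the gain of $\tfrac{3}{2}$ per additional factor encoded in the hypothesis $\delta\le\delta_1+\cdots+\delta_m+(m-1)\tfrac{3}{2}$.

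\emph{Base case} ($m=2$): I would apply Proposition \ref{Algebra} with the choice $s_1=s_2=s$. The hypothesis $s>\tfrac{3}{2}$ yields $s+\tfrac{3}{2}<2s=s_1+s_2$; the conditions $s\le\min(s_1,s_2)$ and $0\le s_1+s_2$ are trivial; and $\delta-\tfrac{3}{2}\le \delta_1+\delta_2$ is exactly the weight hypothesis for $m=2$. Hence
\[
\|u_1u_2\|_{H_{s,\delta}}\le C\,\|u_1\|_{H_{s,\delta_1}}\|u_2\|_{H_{s,\delta_2}}.
\]

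\emph{Inductive step}: Assume the statement holds for $m-1$ factors. Given $u_1,\ldots,u_m\in H_{s,\delta_i}$, set $v=u_1\cdots u_{m-1}$ and let $\tilde\delta=\delta_1+\cdots+\delta_{m-1}+(m-2)\tfrac{3}{2}$. The induction hypothesis gives $v\in H_{s,\tilde\delta}$ with
\[
\|v\|_{H_{s,\tilde\delta}}\le C\prod_{i=1}^{m-1}\|u_i\|_{H_{s,\delta_i}}.
\]
Now apply the base-case estimate (Proposition \ref{Algebra} with $s_1=s_2=s$) to $v$ and $u_m$: for any $\delta$ with $\delta-\tfrac{3}{2}\le \tilde\delta+\delta_m$, that is, $\delta\le \delta_1+\cdots+\delta_m+(m-1)\tfrac{3}{2}$, one obtains
\[
\|u\|_{H_{s,\delta}}=\|v\,u_m\|_{H_{s,\delta}}\le C\,\|v\|_{H_{s,\tilde\delta}}\|u_m\|_{H_{s,\delta_m}}\le C'\prod_{i=1}^m\|u_i\|_{H_{s,\delta_i}},
\]
completing the induction.

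There is no real obstacle here beyond keeping track of the arithmetic of the weight exponents: each application of Proposition \ref{Algebra} is responsible for exactly the $+\tfrac{3}{2}$ gain, and after $m-1$ applications one accumulates the full $(m-1)\tfrac{3}{2}$ improvement. The essential input is $s>\tfrac{3}{2}$, which is what makes Proposition \ref{Algebra} applicable with $s_1=s_2=s$ at every step; without this the bilinear multiplication estimate would fail and the telescoping of the weight improvement would collapse.
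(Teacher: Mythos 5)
Your proof is correct, but it follows a genuinely different route from the paper's. You establish the result by induction on $m$, using the bilinear multiplication estimate (Proposition \ref{Algebra}) both for the base case and to absorb one factor at a time in the inductive step; the arithmetic of the weight condition $\delta-\tfrac{3}{2}\le\delta_1+\delta_2$ telescopes exactly to give the $(m-1)\tfrac{3}{2}$ gain, and the hypothesis $s>\tfrac{3}{2}$ is precisely what allows the choice $s_1=s_2=s$ in Proposition \ref{Algebra} at each step. The paper instead proves the estimate in one shot, directly from the dyadic definition of the $H_{s,\delta}$-norm: it invokes Proposition \ref{prop:equivalence:1} to pass to the equivalent norm built from $\psi_j^m$, writes $\psi_j^m u = (\psi_j u_1)\cdots(\psi_j u_m)$, applies the ordinary $H^s$ multiplication estimate (for $s>\tfrac32$) to each dyadic block, and then closes the sum over $j$ using the weight inequality together with H\"older and the elementary inequality $(\sum_j a_j^m)^{1/m}\le\sum_j a_j$. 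Your inductive argument is shorter and more transparent once Proposition \ref{Algebra} is accepted as a black box; the paper's argument is more self-contained, making the source of the $\tfrac32$-per-factor weight gain visible at the level of the dyadic decomposition rather than hiding it inside the bilinear estimate.
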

\begin{proof}
  An essential tool of the proof is Proposition \ref{prop:equivalence:1}
  that provides an equivalent norm. 
  We use the norm as given by (\ref{eq:norm:2}) with $\gamma=m$, then
  by the multiplication property in $H^s$, we obtain
  \begin{equation*}
    \begin{split}
      & \left\|u\right\|_{H_{s,\delta}}^2 \leq C
      \left\|u\right\|_{H_{s,\delta,m}}^2 \\ = C & \sum_j 2^{(
        \delta +\frac{3}{2})2j} \| \left(\psi_j^m \left(u_1u_2\cdots 
u_m\right)\right)_{(2^j)}\|_{H^{s}}^{2} \\
      \leq C & \sum_j 2^{(\delta +\frac{3}{2})2j} \|\left(\psi_j
        u_1\right)_{(2^j)}\|_{H^{s}}^{2}\cdots \|\left(\psi_j
        u_m\right)_{(2^j)}\|_{H^{s}}^{2}.
    \end{split}
  \end{equation*}
  Set $a_{i,j}=\|\left(\psi_j u_i\right)_{(2^j)}\|_{H^{s}}^{2}$, then
  by the assumption
  \begin{equation*}
    \left(\delta+\frac{3}{2}\right)\leq \sum_{i=0}^m \left(\delta_i+\frac{3}{2}\right),
  \end{equation*}
  Hölder's inequality, and the elementary inequality
  \begin{math}
    \left(\sum_j a_{ij}^m\right)^{1/m}\leq \sum_j a_{ij}
  \end{math}
  (see
  e.g.~\cite[\S1.4]{hardy34:_inequal
  }, we have that
  \begin{equation*}
    \begin{split}
      \left\|u\right\|_{H_{s,\delta}}^2 & \leq C
      \sum_{j=0}^\infty\left( 2^{(\delta_+\frac{3}{2})2j}\prod_{i=0}^m
        a_{i,j}\right) \leq C \sum_{j=0}^\infty\left( \prod_{i=0}^m
        2^{(\delta_i+\frac{3}{2})2j}a_{i,j}\right)\\ & \leq
      C\prod_{i=0}^m\left(\sum_{j=0}^\infty\left(
          2^{(\delta_i+\frac{3}{2})2j}a_{i,j}\right)^m\right)^{\frac{1}{m}}
      \leq C \prod_{i=0}^m\left(\sum_{j=0}^\infty\left(
          2^{(\delta_i+\frac{3}{2})2j}a_{i,j}\right)\right)\\ & = C
      \prod_{i=0}^m\left( \left\|u_i\right\|_{H_{s,\delta_i}}^2\right).
    \end{split}
  \end{equation*}
\end{proof}
\begin{cor}[Powers of functions]
  Suppose $u\in H_{s,\delta}$, $\frac{3}{2}<s$ and $m$ be an integer
  greater than one.
  Then $u^m\in H_{s,\delta+\delta_0}$ whenever
  $\frac{\delta_0}{m-1}-\frac{3}{2}\leq \delta$.
\end{cor}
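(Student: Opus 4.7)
The plan is to apply Lemma \ref{lem:3} directly to the product $u^m = u_1 u_2 \cdots u_m$ with the specialization $u_1 = u_2 = \cdots = u_m = u$ and $\delta_1 = \cdots = \delta_m = \delta$. The regularity hypothesis $\frac{3}{2} < s$ of the lemma is available from the hypothesis of the corollary, and no condition on $s$ beyond this is needed.

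Under this specialization, the weight admissibility condition of Lemma \ref{lem:3} reads
\begin{equation*}
  \delta' \leq \delta_1 + \cdots + \delta_m + \frac{3(m-1)}{2} = m\delta + \frac{3(m-1)}{2}.
\end{equation*}
Setting $\delta' = \delta + \delta_0$, this becomes $\delta_0 \leq (m-1)\delta + \frac{3(m-1)}{2} = (m-1)\bigl(\delta + \frac{3}{2}\bigr)$, which rearranges (using $m > 1$) to $\frac{\delta_0}{m-1} - \frac{3}{2} \leq \delta$. This is exactly the hypothesis of the corollary, so Lemma \ref{lem:3} yields $u^m \in H_{s,\delta+\delta_0}$ with the estimate
\begin{equation*}
  \|u^m\|_{H_{s,\delta+\delta_0}} \leq C \|u\|_{H_{s,\delta}}^m.
\end{equation*}

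There is no real obstacle here, since the corollary is essentially a reformulation of the lemma for the diagonal case $u_i = u$; the only content is the algebraic rearrangement of the weight inequality to solve for the improvement exponent $\delta_0$ in terms of the base weight $\delta$.
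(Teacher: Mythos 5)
Your proof is correct and follows the same route the paper intends: the corollary is stated immediately after Lemma \ref{lem:3} as its diagonal specialization, and your algebraic rearrangement of the weight condition $\delta+\delta_0 \leq m\delta + \frac{3(m-1)}{2}$ into $\frac{\delta_0}{m-1}-\frac{3}{2}\leq\delta$ is precisely the computation that justifies it.
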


\section{Mathematical tools}
\label{sec:cauchy-problem}
In this section, we establish the tools needed for the proof of the
main result. 
These comprise of the energy estimates and the local existence theorem for
quasilinear symmetric hyperbolic systems, the solution to the Poisson
equation, as well as elliptic estimates, and an estimate of the power of
functions, all these are dealt in the weighted Sobolev spaces. 
We shall use the notation $x\lesssim y$ to denote an inequality
$x\leq C y$, where the positive constant $C$ depends on the parameters in
question.
\subsection{Symmetric hyperbolic systems}
\label{sec:symm-hyperb-syst}
\begin{defn}[Symmetric hyperbolic systems]
  \label{def:proto-euler-poisson-banach:1}
  We call a system of the form
  \begin{equation*}
    A^0(U)\partial_t U +  A^a(U)\partial_a U + B(U)U=F(t,x)
  \end{equation*}
  a symmetric hyperbolic system under the following assumptions:
  \begin{enumerate}
    \item
    \label{item:wellposs-einstein-euler-hyper:1}
    $A^\alpha$ are symmetric matrices for $\alpha=0,1,2,3$;
    \item $A^0$ is uniformly positive definite;
    \item \label{item:wellposs-einstein-euler-hyper:2} $A^\alpha$ and
    $B$ are smooth.
  \end{enumerate}
\end{defn}
\begin{rem}
  \label{rem:proto-euler-poisson-banach:3}
  It is straightforward to check that the
  Euler--Poisson--Makino system
  (\ref{eq:iteration-scheme:1b})--(\ref{eq:iteration-scheme:2b}) in a
  matrix representation takes the form
  \begin{equation*}
    \begin{pmatrix}
      1 & 0
      \\
      0 & \delta_{ab}
    \end{pmatrix}
    \partial_t
    \begin{pmatrix}
      w
      \\
      v^b
    \end{pmatrix}
    +
    \begin{pmatrix}
      v^c & \frac{\gamma -1}{2}\delta^c_b
      \\
      \frac{\gamma -1}{2}\delta^c_a & \delta_{ab} v^c
    \end{pmatrix}
    \partial_c
    \begin{pmatrix}
      w
      \\
      v^b
    \end{pmatrix}
    =
    \begin{pmatrix}
      0\\
      -\partial_a\phi\\
    \end{pmatrix}.
  \end{equation*}
  Here $\delta^c_a$ denotes the Kronecker delta.
  This is obviously a symmetric hyperbolic system coupled with the
  Poisson equation. 
  Note that $A^0=\mathds{1}$.
\end{rem}
\subsection{The Cauchy problem, existence theorem and energy
  estimates}
\label{sec:cauchy-probl-exist}
We consider the Cauchy problem for quasilinear symmetric hyperbolic
systems of the form
\begin{equation}
  \label{eq:neu-existence:21}
  \begin{cases}
    & \displaystyle{\partial_t U + A^a(U)\partial_a U +
      B(U)U = F(t,x)}\\
    & U(0,x)=U_0(x)
  \end{cases},
\end{equation}
where $A^a(U)$ and $B(U)$ are $N\times N$ matrices such that
$A^a(0)=B(0)=0$, and $U$ and $F$ are vector valid functions in $\setR^N$.
The well-posedness of these systems in the Sobolev spaces $H^s$ is well known.
Here we establish it in the weighted spaces $H_{s,\delta}$.
\begin{thm}[Well posedness of first order hyperbolic symmetric systems
  in $H_{s,\delta}$]
  \label{thm:existence-quasi}
  Let $\frac{5}{2}<s$, $-\frac{3}{2}\leq\delta$, $U_0\in H_{s,\delta}$
  and $F(t,\cdot)\in C([0,T^0],H_{s,\delta})$ for some positive $T^0$.
  Then there exists a positive $T\leq T^0$ and a unique solution $U$
  to the system (\ref{eq:neu-existence:21}) such that
  \begin{equation*}
    U\in C([0,T],H_{s,\delta})\cap C^1([0,T],H_{s-1,\delta+1}).
  \end{equation*}
\end{thm}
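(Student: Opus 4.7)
The plan is to proceed by a Picard-type iteration, mirroring the classical Kato--Majda theory for $H^{s}$ but carrying every estimate through in the weighted norm $\|\cdot\|_{H_{s,\delta}}$. I would set $U^{(0)}(t,x):=U_0(x)$ and, inductively, let $U^{(n+1)}$ be the solution of the \emph{linear} symmetric hyperbolic system with coefficients frozen at $U^{(n)}$,
\begin{equation*}
  \partial_t U^{(n+1)} + A^a(U^{(n)})\partial_a U^{(n+1)} + B(U^{(n)})U^{(n+1)} = F, \qquad U^{(n+1)}(0,\cdot)=U_0.
\end{equation*}
This reduces the task to (i) a linear existence theory in $H_{s,\delta}$ for symmetric systems with variable, sufficiently regular coefficients, and (ii) uniform bounds plus contraction for the iterates.

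The heart of the argument is a weighted energy inequality for the linear problem: if $U$ solves $\partial_t U + \tilde A^a\partial_a U + \tilde B U = F$ with $\tilde A^a$ symmetric, then I would like
\begin{equation*}
  \tfrac{d}{dt}\|U\|_{H_{s,\delta}}^{2}\;\leq\; C\bigl(1+\|\tilde A^a\|_{H_{s,\delta}}+\|\tilde B\|_{H_{s,\delta}}\bigr)\|U\|_{H_{s,\delta}}^{2}\;+\;2\|F\|_{H_{s,\delta}}\|U\|_{H_{s,\delta}}.
\end{equation*}
To prove it I would localize via the dyadic partition $\{\psi_j\}$, rescale $x\mapsto 2^{j}x$ so that $\bigl(\psi_j U\bigr)_{(2^{j})}$ lives at unit scale, apply the standard $H^{s}$ energy identity (integration by parts uses symmetry of $\tilde A^{a}$), and handle the localization commutator $A^{a}(\partial_{a}\psi_{j})U$: the crucial decay $|\partial_a\psi_j|\leq C\,2^{-j}$ is exactly what is needed for the commutator error, once multiplied by the dyadic weight $2^{(3/2+\delta)2j}$ and summed in $j$, to be absorbed into $\|U\|_{H_{s,\delta}}^{2}$. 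The nonlinear coefficients $A^{a}(U^{(n)})$ and $B(U^{(n)})U^{(n)}$ are then controlled in $H_{s,\delta}$ by Moser's estimate (Proposition \ref{Moser}) together with the multiplication lemma (Proposition \ref{Algebra}), and a standard closing argument yields uniform boundedness of the iterates in $L^{\infty}([0,T],H_{s,\delta})$ for some $T>0$ depending on $\|U_0\|_{H_{s,\delta}}$.

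Next I would establish contraction in a lower-order norm. The difference $W^{(n)}:=U^{(n+1)}-U^{(n)}$ satisfies a linear symmetric system whose source involves $(A^{a}(U^{(n)})-A^{a}(U^{(n-1)}))\partial_{a}U^{(n)}$ and the analogous $B$-difference. Applying the same energy method at the level of $L^{2}_{\delta+1}=H_{0,\delta+1}$, bounding the differences in $A^a,B$ by the Sobolev embedding $H_{s,\delta}\hookrightarrow L^{\infty}_{\delta+3/2}$ (Proposition \ref{prop:emb-cont}) and Proposition \ref{embedding1}, and invoking Gronwall give a contraction in $C([0,T],H_{0,\delta+1})$ after shrinking $T$. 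Passing to the limit yields a solution $U\in L^{\infty}([0,T],H_{s,\delta})\cap\mathrm{Lip}([0,T],H_{s-1,\delta+1})$; uniqueness follows directly from the same lower-order estimate applied to two solutions. To recover $U\in C([0,T],H_{s,\delta})\cap C^{1}([0,T],H_{s-1,\delta+1})$ I would use the Bona--Smith trick together with the approximation property (Proposition \ref{prop:approx}): mollify $U_0$ to smoother data $U_0^{\varepsilon}\in H_{s+\sigma,\delta}$, use the energy inequality to show $U^{\varepsilon}\to U$ uniformly in $H_{s,\delta}$, and transfer strong time continuity from the regularized solutions, whose time continuity in $H_{s,\delta}$ is automatic.

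The main obstacle is the weighted energy estimate itself. The dyadic rescaling $\bigl\|(\psi_{j}U)_{(2^{j})}\bigr\|_{H^{s}}$ is defined at unit scale while the equation lives on the original scale, so one must control fractional commutators of the form $[\Lambda^{s},\tilde A^{a}\partial_{a}]$ interwoven with the spatial cut-offs $\psi_{j}$ and the scaling operator, and verify that every error term either carries the right factor of $2^{-j}$ or is absorbed by the Moser/multiplication bounds. Once this weighted commutator estimate is secured (the dyadic definition \eqref{eq:weighted:4} together with $|\partial^{\alpha}\psi_j|\leq C_\alpha 2^{-|\alpha|j}$ makes it plausible but technical), the remainder of the argument is a careful transcription of the Kato--Majda scheme into the $H_{s,\delta}$ setting.
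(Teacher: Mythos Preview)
Your proposal is correct and follows essentially the same Kato--Majda route as the paper: a weighted $H_{s,\delta}$ energy estimate (the paper's Lemma~\ref{lem:energy-existence:1}, proved via the Kato--Ponce commutator inequality after dyadic localization with $\psi_j^2$) for uniform bounds on the iterates, contraction in a low $L^2_\delta$-type norm, interpolation, and an approximation argument for strong time continuity. The only notable implementation difference is that the paper mollifies both the initial data and the source term at each step of the iteration so that the linear problems have $C_0^\infty$ coefficients and hence $C_0^\infty$ solutions, thereby bypassing the separate linear $H_{s,\delta}$ existence theory you list as item (i); your Bona--Smith argument for time continuity is equivalent to the paper's weak-limit plus $\limsup$-of-norm argument.
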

\begin{rem}
  The conclusion of Theorem \ref{thm:existence-quasi} can be extended
  to a system with $A^0$ positive definite, and
  $A^0(u)-I\in H_{s,\delta}$ for $u\in H_{s,\delta}$, but since we do not use such a
  generalization we omit the details.
  
\end{rem}
An essential ingredient of the proof are the energy estimates for the
linearized system.
\subsubsection{Energy estimates in the $H_{s,\delta}$ spaces}
\label{sec:energy-estimates}
We consider the linearization of the system
(\ref{eq:neu-existence:21}):
\begin{equation}
  \label{eq:linear}
  \partial_t U +  A^a(t,x)
  \partial_a U +
  B(t,x)U = F(t,x),
\end{equation}
where the matrices $A^a$ are a symmetric.
In order to derive the energy estimates we introduce an inner--product
in the $H_{s,\delta}$ spaces.
So let
$\Lambda^s[U]=\mathcal{F}^{-1}\left((1+|\xi|^2)^{\frac{s}{2}}\mathcal{F}
(U)\right)$,
where $\mathcal{F}$ denotes the Fourier transform. 
Then
\begin{equation*}
  \langle U, V\rangle_s:=\langle  
\Lambda^s[U],\Lambda^s[V]\rangle_{L^2}=\int\left(\Lambda^s[U]\cdot\Lambda^s[V]
\right)dx
\end{equation*}
is an inner--product on $H^s$, here the dot $\cdot$ denotes the scalar product. 
Now we set
\begin{equation}
  \label{eq:inner}
  \langle U,V\rangle_{s,\delta}:=\sum_{j=0}^\infty 
  2^{(\delta+\frac{3}{2})2j}
  \left\langle\left(\psi_j^2U\right)_{2^j},\left(\psi_j^2V
    \right)_{2^j}\right\rangle_{s},
\end{equation}
then it is an inner-product on the $H_{s,\delta}$ spaces, and
by Proposition \ref{prop:equivalence:1} the norm
\begin{equation*}
  \langle U,U\rangle_{s,\delta}=\sum_{j=0}^\infty 
2^{(\delta+\frac{3}{2})2j}\|\left(\psi_j^2U\right)_{2^j}\|_{H^s}^2=\|U\|_{H_{s,
\delta,2}}^2
\end{equation*}
is equivalent to the norm (\ref{eq:weighted:4}).

\begin{lem}
  \label{lem:energy-existence:1}
  Suppose $\frac{5}{2}< s$, $-\frac{3}{2}\leq \delta$, $A^a$ are
  symmetric matrices and
  $A^a(t,\cdot), B(t,\cdot), F(t,\cdot)\in H_{s,\delta}$. 
  If $U(t)=U(t,\cdot)\in C^1([0,T], H_{s,\delta})$ is a solution to
  the linear system (\ref{eq:linear}) for some positive $T$, then for
  $t\in [0,T]$
  \begin{equation}
    \label{eq:energy-estimates}
    \frac{1}{2}\frac{d}{dt}\langle 
    U(t),U(t)\rangle_{s,\delta}
    \leq C 
    \left(\|U(t)\|_{H_{s,\delta}}^2+\|F(t,\cdot)\|_{H_{s,\delta}}^2
    \right),   
  \end{equation}
 where the constant $C$ depends on the $H_{s,\delta}$ norm of the
  matrices $A^a$ and $B$.
\end{lem}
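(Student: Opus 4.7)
My plan is to differentiate $\langle U(t),U(t)\rangle_{s,\delta}$ term-by-term in the dyadic expression (\ref{eq:inner}), substitute the equation (\ref{eq:linear}) to split the time derivative into three contributions (from $F$, $BU$, and $A^a\partial_a U$), and estimate each. Identifying the inner-product norm with $\|\cdot\|_{H_{s,\delta}}$ via Proposition \ref{prop:equivalence:1}, the $F$-contribution will be bounded by Cauchy--Schwarz in $\langle\cdot,\cdot\rangle_{s,\delta}$ followed by $2ab\le a^2+b^2$, yielding $\lesssim\|F\|_{H_{s,\delta}}^2+\|U\|_{H_{s,\delta}}^2$. The $BU$-contribution will be handled by combining the multiplication estimate of Proposition \ref{Algebra} (applicable because $s>\tfrac52>\tfrac32$ and $-\tfrac32\le\delta$, so one may take $\delta_1=\delta_2=\delta$) with Cauchy--Schwarz, producing a bound of the form $C\,\|B\|_{H_{s,\delta}}\,\|U\|_{H_{s,\delta}}^2$.

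The main work concerns the $A^a\partial_a U$ contribution. I would first commute with the cutoff by writing $\psi_j^2 A^a\partial_a U = A^a\partial_a(\psi_j^2 U) - A^a U\,\partial_a\psi_j^2$, so that the differential operator lands on the localized function $\psi_j^2 U$. The remainder $A^a U\,\partial_a\psi_j^2$ is lower order because $|\partial_a\psi_j^2|\lesssim 2^{-j}$ on its support, and a combination of Proposition \ref{Algebra} with the embedding $\|A^a\|_{L^\infty_0}\lesssim\|A^a\|_{H_{s,\delta}}$ from Proposition \ref{prop:emb-cont} yields, after summing in $j$, a bound $C\|A^a\|_{H_{s,\delta}}\|U\|_{H_{s,\delta}}^2$. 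For the principal piece, I would rescale $x\mapsto 2^j x$ so that the $j$-th summand becomes
\[
2^{-j}\bigl\langle \Lambda^s\bigl(\tilde A^a_j\,\partial_a\tilde U_j\bigr),\ \Lambda^s\tilde U_j\bigr\rangle_{L^2},
\]
where $\tilde A^a_j(x)=A^a(2^j x)$ and $\tilde U_j=(\psi_j^2 U)_{2^j}$, and then run the standard $H^s$ machinery: decompose $\Lambda^s(\tilde A^a_j\,\partial_a\tilde U_j)=\tilde A^a_j\,\Lambda^s\partial_a\tilde U_j+[\Lambda^s,\tilde A^a_j]\partial_a\tilde U_j$, use the symmetry of $\tilde A^a_j$ and integration by parts on the first piece, and apply a Kato--Ponce commutator estimate to the second. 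The factor $2^j$ appearing in $\|\nabla\tilde A^a_j\|_{L^\infty}=2^j\|\nabla A^a\|_{L^\infty}$ is exactly cancelled by the prefactor $2^{-j}$ from rescaling, while the hypothesis $\delta\ge-\tfrac32$ guarantees that $\|\tilde A^a_j\|_{H^s}$ is controlled by $\|A^a\|_{H_{s,\delta}}$ uniformly in $j$ (up to the decay factor $2^{-(\delta+3/2)j}$). Reassembling the dyadic sum with the weight $2^{(\delta+3/2)2j}$ then reproduces $\|U\|_{H_{s,\delta}}^2$ and yields (\ref{eq:energy-estimates}).

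The main technical obstacle is the bookkeeping for the rescaled matrix $\tilde A^a_j$, which is not compactly supported although $\tilde U_j$ is. To invoke Kato--Ponce cleanly at unit scale, I would first replace $\tilde A^a_j$ by $(\phi_j A^a)_{2^j}$, where $\phi_j$ is a mildly enlarged dyadic cutoff with $\phi_j\equiv 1$ on $\mathrm{supp}(\psi_j^2)$, and verify through Proposition \ref{prop:equivalence:1} that the family $\{\phi_j\}$ produces an equivalent weighted norm on $A^a$. With this localization in place, the per-scale estimate becomes a uniform $H^s$ energy bound on the fixed annulus $\{1/4\le|x|\le 2\}$, and the dyadic summability over $j$ is an essentially routine consequence of $\delta+\tfrac32\ge 0$.
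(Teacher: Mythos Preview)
Your proposal is correct and follows essentially the same route as the paper: both arguments localize $A^a$ near the support of $\psi_j$ (the paper via a partition of unity $\sum_m\Psi_m=1$ with only $|m-j|\le 4$ contributing, you via a single enlarged cutoff $\phi_j$), then at each dyadic scale apply the Kato--Ponce commutator estimate to $[\Lambda^s,(\phi_j A^a)_{2^j}]$ and integrate by parts on the symmetric principal part, with the summation closed using $\delta\ge -\tfrac32$. The only organizational difference is that you commute $\psi_j^2$ through $\partial_a$ at the outset (producing the lower-order remainder $A^aU\,\partial_a\psi_j^2$), whereas the paper postpones this commutation to the integration-by-parts step; the resulting terms and estimates are the same.
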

\begin{proof}
  Since $U(t)\in C^1([0,T], H_{s,\delta}))$ and it satisfies
  (\ref{eq:linear}), we have that
  \begin{equation*}
    \begin{split}
      & \frac{d}{2dt} \langle U(t),U(t)\rangle_{s,\delta} =\langle
      U(t),\partial_t U(t)\rangle_{s,\delta}\\ =-&\langle U(t),A^a
      \partial_a U(t) \rangle_{s,\delta}-\langle U(t), BU(t)
      \rangle_{s,\delta}+\langle U(t),F \rangle_{s,\delta}.
    \end{split}
  \end{equation*}

  Using the multiplicity property of $H_{s,\delta}$, Proposition
  \ref{Algebra}, and the Cauchy--Schwarz inequality we obtain that
  \begin{equation}
    \label{eq:inner:3}
    |\langle U(t), BU(t) \rangle_{s,\delta}|\leq 
    \|U(t)\|_{H_{s,\delta}}\|BU(t)\|_{H_{s,\delta}}\lesssim 
    \|B\|_{H_{s,\delta}}\|U(t)\|_{H_{s,\delta}}^2.
  \end{equation}
  Similarly, the last term
  \begin{equation}
    \label{eq:inner:4}
    |\langle U(t),F \rangle_{s,\delta}|\leq 
    \|U(t)\|_{H_{s,\delta}}\|F\|_{H_{s,\delta}}\leq 
    \frac{1}{2}\left(\|U(t)\|_{H_{s,\delta}}^2+\|F\|_{H_{s,\delta}}^2\right).
  \end{equation}
  The crucial point is the estimate of the terms with the matrices
  $A^a$. 
  So for a fixed index $a$ we set
  \begin{equation*}
  \begin{split}
  E_a(j) & =
\left\langle\left[\left(\psi_j^2U(t)\right)_{2^j}\right],\left[
\left(\psi_j^2\left(A^a\partial_a
          U(t)\right)\right)_{2^j}\right]\right\rangle_{s} \\ & =
  \left\langle\Lambda^s\left[\left(\psi_j^2U(t)\right)_{2^j}\right],
    \Lambda^s\left[\left(\psi_j^2\left(A^a\partial_a
          U(t)\right)\right)_{2^j}\right]\right\rangle_{L^2}, 
  \end{split}
  \end{equation*}
  then by the definition of the inner--product in $H_{s,\delta}$
  (\ref{eq:inner}), we have to show  the inequality
  \begin{equation}
    \label{eq:ineq:4}
    \langle U(t),A^a
      \partial_a U(t) \rangle_{s,\delta}=
    \sum_{j=0}^\infty 2^{(\delta+\frac{3}{2})2j}|E_a(j)|\leq C
    \| U\|_{H_{s,\delta}}^2,
  \end{equation}
  where the constant $C$ depends on the $H_{s,\delta}$ norm of the
  matrices $A^a$.

  We shall obtain it by applying the techniques of integration by
  parts, which requires the commutation of $\psi_jA^a$ with
  $\Lambda^s$.
  To do this we set $ \Psi_m=(\sum_{j=0}^\infty \psi_j)^{-1}\psi_m$,
  then $ \sum_{m=0}^\infty \Psi_m(x)=1$, hence we can replace $1$ by
  the infinite sum and get that for each $j$,
  \begin{equation}
    \label{eq:inner:2}
    \begin{split}
      E_a(j)
      =&\left\langle\Lambda^s\left[\left(\psi_j^2U(t)\right)_{2^j}\right],
        \Lambda^s\left[\left(\psi_j^2\left(A^a\partial_a
              U(t)\right)\right)_{2^j}\right]\right\rangle_{L^2}\\
      = &
      \left\langle\Lambda^s\left[\left(\psi_j^2U(t)\right)_{2^j}\right],
        \Lambda^s\left[\left(\psi_j^2\left(\left( \sum_{m=0}^\infty
                \Psi_m\right)A^a\partial_a
              U(t)\right)\right)_{2^j}\right]\right\rangle_{L^2}\\ = &
      \sum_{m=0}^\infty
      \left\langle\Lambda^s\left[\left(\psi_j^2U(t)\right)_{2^j}\right],
        \Lambda^s\left[\left(\psi_j^2\left( \Psi_m A^a\partial_a
              U(t)\right)\right)_{2^j}\right]\right\rangle_{L^2}.
    \end{split}
  \end{equation}
  Note that $\psi_j\Psi_m\not\equiv 0$ only when $j-4\leq m\leq j+4$,
  therefore the series (\ref{eq:inner:2}) has a finite number of
  non-zero terms. 
  We now make the commutation
  \begin{equation}
    \label{eq:commutation}
    \begin{split}
      & \Lambda^s\left[\left(\psi_j^2\left(\Psi_m A^a\partial_a
            U(t)\right)\right)_{2^j}\right]\\ = &
      \Lambda^s\left[\left(\psi_j^2\left( \Psi_m A^a\partial_a
            U(t)\right)\right)_{2^j}\right] -\left( \Psi_m
        A^a\right)_{2^j}\Lambda^s\left[\left(\psi_j^2\partial_a
          U(t)\right)_{2^j}\right] \\ + & \left( \Psi_m
        A^a\right)_{2^j}\Lambda^s\left[\left(\psi_j^2\partial_a
          U(t)\right)_{2^j}\right].
    \end{split}
  \end{equation}
  Then we estimate the first term by the Kato--Ponce commutator inequality
  \cite[\S3.6]{kato88:_commut_euler_navier_stokes
  }, and get that
  \begin{equation}
    \label{eq:kp}
    \begin{split}
      &\left\|\Lambda^s\left[\left(\psi_j^2\left( \Psi_m A^a\partial_a
              U(t)\right)\right)_{2^j}\right] -\left( \Psi_m
          A^a\right)_{2^j}\Lambda^s\left[\left(\psi_j^2\partial_a
            U(t)\right)_{2^j}\right]\right\|_{L^2} \\ \lesssim &
      \left\|\nabla\left( \Psi_m
          A^a\right)_{2^j}\right\|_{L^\infty}\left\|
        \left(\psi_j^2\partial_a U(t)\right)_{2^j}\right\|_{H^{s-1}}+
      \left\|\left( \Psi_m A^a\right)_{2^j}\right\|_{H^s}\left\|
        \left(\psi_j^2\partial_a U(t)\right)_{2^j}\right\|_{L^\infty}.
    \end{split}
  \end{equation}
  For the second term of the left hand side of (\ref{eq:commutation})
  we use the symmetry of $A^a$ and then by integration by parts we
  obtain that
  \begin{equation}
    \label{eq:int-part}
    \begin{split}
      2\left\langle
        \Lambda^s\left[\left(\psi_j^2U(t)\right)_{2^j}\right],
        \left(\Psi_m
          A^a\right)_{2^j}\Lambda^s\left[\left(\psi_j^2\partial_a
            U(t)\right)_{2^j}\right]\right\rangle_{L^2}\\ = 4
      \left\langle
        \Lambda^s\left[\left(\partial_a\psi_j\psi_jU(t)\right)_{2^j}\right],
        \left(\Psi_m A^a\right)_{2^j}\Lambda^s\left[\left(\psi_j^2
            U(t)\right)_{2^j}\right]\right\rangle_{L^2} \\ +
      \left\langle
        \Lambda^s\left[\left(\psi_j^2U(t)\right)_{2^j}\right],
        \partial_a\left(\Psi_m
          A^a\right)_{2^j}\Lambda^s\left[\left(\psi_j^2
            U(t)\right)_{2^j}\right]\right\rangle_{L^2}.
    \end{split}
  \end{equation}
  Setting
  \begin{equation*}
    E_a(j,m)= 
    \left\langle
      \Lambda^s\left[\left(
          \psi_j^2U(t)\right)_{2^j}\right],\Lambda^s\left[\left(\psi_j^2\left(
            \Psi_m A^a\partial_a
            U(t)\right)
        \right)_{2^j}\right]\right\rangle_{L^2},
  \end{equation*}
  then by inequality (\ref{eq:kp}), equality (\ref{eq:int-part}) and
  the Cauchy Schwarz inequality, we obtain that
  \begin{equation}
    \label{eq:ineq:1}
    \begin{split}
      |E_a(j,m)| & \lesssim \left\|\nabla\left(\Psi_m
          A^a\right)_{2^j}\right\|_{L^\infty}\left\|\left(\psi_j^2
          U(t)\right)_{2^j}\right\|_{H^s}\left\|\left(\psi_j^2
          \partial_aU(t)\right)_{2^j}\right\|_{H^{s-1}} \\ & +
      \left\|\left(\Psi_m
          A^a\right)_{2^j}\right\|_{H^s}\left\|\left(\psi_j^2
          U(t)\right)_{2^j}\right\|_{H^s}\left\|\left(\psi_j^2
          \partial_aU(t)\right)_{2^j}\right\|_{L^\infty} \\ & +
      2\left\|\left(\Psi_m
          A^a\right)_{2^j}\right\|_{L^\infty}\left\|\left(\psi_j
          U(t)\right)_{2^j}\right\|_{H^s}\left\|\left(\psi_j^2
          U(t)\right)_{2^j}\right\|_{H^{s}}\\ & +\frac{1}{2}
      \left\|\partial_a\left( \Psi_m
          A^a\right)_{2^j}\right\|_{L^\infty}\left\|\left(\psi_j^2
          U(t)\right)_{2^j}\right\|_{H^s}^2.
    \end{split}
  \end{equation}
  Note that $\Psi_m(x)=f(x)\psi_m(x)$, where $f\in C^\infty$. 
  Hence
  $\|(\Psi_m A^a)_{2^j}\|_{H_{s,\delta}}\lesssim \|(\psi_m
  A^a)_{2^j}\|_{H_{s,\delta}}$
  Now, taking into account the Sobolev inequality, we have that
  \begin{equation*}
    \left\|\nabla\left( 
        \Psi_m A^a\right)_{2^j}\right\|_{L^\infty}\lesssim  \left\|
      \nabla(\Psi_m 
      A^a)_{2^j}\right\|_{H^{s-1}}\lesssim \left\|\left( 
        \psi_m A^a\right)_{2^j}\right\|_{H^{s}}
  \end{equation*}
  and
  \begin{equation}
    \label{eq:ineq:2}
    \left\| \left(\psi_j^2\partial_a 
        U(t)\right)_{2^j}\right\|_{L^\infty}\lesssim  \left\| 
\left(\psi_j^2\partial_a 
        U(t)\right)_{2^j}\right\|_{H^{s-1}}.
  \end{equation}
  We recall that $E_a(j,m)\neq 0$ only if $j-4\leq m\leq j+4$, hence
  by inequalities (\ref{eq:ineq:1})-(\ref{eq:ineq:2}) and equality
  (\ref{eq:commutation}) we obtain that
  \begin{equation}
    \label{eq:ineq:3}
    \begin{split}
      & \sum_{j=0}^\infty 2^{(\delta+\frac{3}{2})2j}|E_a(j)|=
      \sum_{j=0}^\infty\sum_{m=j-4}^{j+4}
      2^{(\delta+\frac{3}{2})2j}|E_a(j,m)|\\
      \lesssim & \sum_{j=0}^\infty\sum_{m=j-4}^{j+4}
      2^{(\delta+\frac{3}{2})2j}\left\|\left( \psi_m
          A^a\right)_{2^j}\right\|_{H^{s}} \left\|\left(
          \psi_j^2U(t)\right)_{2^j}\right\|_{H^{s}}\left\|\left(
          \psi_j^2\partial_a U(t)\right)_{2^j}\right\|_{H^{s-1}} \\ +
      & \sum_{j=0}^\infty\sum_{m=j-4}^{j+4}
      2^{(\delta+\frac{3}{2})2j}\left\|\left( \psi_m
          A^a\right)_{2^j}\right\|_{H^{s}} \left\|\left(
          \psi_jU(t)\right)_{2^j}\right\|_{H^{s}}\left\|\left(
          \psi_j^2U(t)\right)_{2^j}\right\|_{H^{s}} \\ + &
      \sum_{j=0}^\infty\sum_{m=j-4}^{j+4}
      2^{(\delta+\frac{3}{2})2j}\left\|\left( \psi_m
          A^a\right)_{2^j}\right\|_{H^{s}} \left\|\left(
          \psi_j^2U(t)\right)_{2^j}\right\|_{H^{s}}^2.
    \end{split}
  \end{equation}
  We estimate now the first term of the right hand side of
  (\ref{eq:ineq:3}). 
  Utilizing the Hölder inequality and the fact that
  $2(\delta+\frac{3}{2})\leq(\delta+\frac{3}{2})+ (\delta+\frac{3}{2})
  +(\delta+1+\frac{3}{2})$ for $-\frac{5}{2}\leq \delta$, we get that
  \begin{equation*}
    \begin{split}
      &\sum_{j=0}^\infty\sum_{m=j-4}^{j+4}
      2^{(\delta+\frac{3}{2})2j}\left\|(\psi_m
        A^a)_{2^j}\right\|_{H^{s}}\left\|(\psi_j^2
        U(t))_{2^j}\right\|_{H^{s}}\left\|(\psi_j^2\partial_a
        U(t))_{2^j}\right\|_{H^{s-1}} \\ \lesssim &
      \left(\sum_{j=0}^\infty\sum_{m=j-4}^{j+4}
        2^{(\delta+\frac{3}{2})2j}\left\|(\psi_m
          A^a)_{2^j}\right\|_{H^{s}}^2\right)^{\frac{1}{2}}
      \left(\sum_{j=0}^\infty\sum_{m=j-4}^{j+4}\left(
          2^{(\delta+\frac{3}{2})2j}\left\|(\psi_j^2
            U(t))_{2^j}\right\|_{H^{s}}^2\right)^2\right)^{\frac{1}{4}}
      \\
      \times & \left(\sum_{j=0}^\infty\sum_{m=j-4}^{j+4}\left(
          2^{(\delta+1+\frac{3}{2})2j}\left\|(\psi_j^2
            \partial_a
            U(t))_{2^j}\right\|_{H^{s-1}}^2\right)^2\right)^{\frac{1}{4}}.
    \end{split}
  \end{equation*}
  By the elementary inequality
  \begin{math}
    \left(\sum_j a_j^2\right)^{1/2}\leq \sum_j a_j
  \end{math}
  (see e.g.~\cite[\S1.4]{hardy34:_inequal
  }),
  \begin{equation*}
    \begin{split}
      &\left(\sum_{j=0}^\infty\sum_{m=j-4}^{j+4}\left(2^{(\delta+\frac{3}{2})2j}
          \left\|(\psi_j^2
            U(t))_{2^j}\right\|_{H^{s}}^2\right)^2\right)^{\frac{1}{4}}
      \\ \leq &
      \left(\sum_{j=0}^\infty\sum_{m=j-4}^{j+4}\left(2^{(\delta+\frac{3}{2})2j}
          \left\|(\psi_j^2
            U(t))_{2^j}\right\|_{H^{s}}^2\right)\right)^{\frac{1}{2}}
      \leq C \|U(t)\|_{H_{s,\delta}}.
    \end{split}
  \end{equation*}
  Likewise, the last term in the product is less than
  $ C \|\partial_a U(t)\|_{H_{s-1,\delta+1}} \leq C \|
  U(t)\|_{H_{s,\delta}}$.
  For the first term in the product we need to use scaling properties
  of the $H^s$--norm, that is,
  \begin{equation*}
    \left\|\left( \psi_m A^a\right)_{2^j}\right\|_{H^{s}}=
    \left\|\left( (\psi_m A^a)_{2^m}\right)_{2^{j-m}}\right\|_{H^{s}}
    C(2^{j-m}) \left\| (\psi_m A^a)_{2^m}\right\|_{H^{s}}.
  \end{equation*}
  Note that $2^{-4}\leq 2^{j-m}\leq 2^4$, hence $C(2^{j-m})$ is
  bounded be a constant that is independent of $m$ and $j$. 
  Hence
  \begin{equation*}
    \left(\sum_{j=0}^\infty\sum_{m=j-4}^{j+4}
      2^{(\delta+\frac{3}{2})2j}\left\|(\psi_m
        A^a)_{2^j}\right\|_{H^{s}}^2\right)^{\frac{1}{2}}\leq C \|
    A^a\|_{H_{s,\delta}}.
  \end{equation*}
  Thus
  \begin{equation*}
    \begin{split}
      & \sum_{j=0}^\infty\sum_{m=j-4}^{j+4}
      2^{(\delta+\frac{3}{2})2j}\left\|(\psi_m
        A^a)_{2^j}\right\|_{H^{s}}\left\|(\psi_j^2
        U(t))_{2^j}\right\|_{H^{s}}\left\|(\psi_j^2\partial_a
        U(t))_{2^j}\right\|_{H^{s-1}} \\ & \leq C
      \|A^a\|_{H_{s,\delta}} \|U(t)\|_{H_{s,\delta}}^2.
    \end{split}
  \end{equation*}
  
  In a similar manner we can estimate the second and the third term of
  the right hand side of (\ref{eq:ineq:3}) and get inequality
  (\ref{eq:ineq:4}).  
  Adding this inequality to (\ref{eq:inner:3}), (\ref{eq:inner:4}) we
  obtain inequality (\ref{eq:energy-estimates}) and that completes the
  proof.
\end{proof}
Energy estimates in a lower norm are needed for the contraction. 
We denote by $L^2_\delta$ the $L^2$ space with the weight
$(1+|x|)^\delta$. 
Obviously $H_{0,\delta}\simeq L_\delta^2$ (see Proposition
\ref{equivalence}).
\begin{lem}
  \label{lem:energy-L:1}
  Let $U(t)\in C^1([0,T], L_\delta^2)$ be a solution to the linear
  system (\ref{eq:linear}) for some positive $T$, then
  \begin{equation}
    \label{eq:energy-estimates:L2}
    \frac{1}{2}\frac{d}{dt}\|U(t)\|^2_{L_\delta^2}\leq
    C 
    \left(\| U(t)\|_{L_{\delta}}^2+ \| F(t,\cdot)\|_{L_\delta}^2\right), \quad 
    t\in[0,T] 
  \end{equation}
  and the constant $C$ depends on the $L^\infty$ norm of $A^a$,
  $\partial_a A^a$ and $B$.
\end{lem}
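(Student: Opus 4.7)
The plan is to mimic the standard energy estimate for symmetric hyperbolic systems, with the weight $(1+|x|)^{2\delta}$ tracked carefully, using Proposition \ref{equivalence} which identifies $H_{0,\delta}$ with $L^2_\delta$. By a density argument I may assume $U(t,\cdot)$ is smooth and compactly supported (for instance approximate in $C^1([0,T],L^2_\delta)$ by smoothed and truncated functions satisfying the same equation up to a remainder that vanishes in the limit), and then pass to the limit in the resulting inequality since all constants depend only on the $L^\infty$ norms of $A^a$, $\partial_a A^a$, $B$.

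First I would differentiate under the integral sign:
\begin{equation*}
\frac{1}{2}\frac{d}{dt}\|U(t)\|_{L^2_\delta}^2 = \int (1+|x|)^{2\delta}\, U\cdot \partial_t U\, dx,
\end{equation*}
and substitute $\partial_t U = -A^a\partial_a U - BU + F$ from \eqref{eq:linear}. The terms involving $B$ and $F$ are handled routinely: Cauchy--Schwarz gives
\begin{equation*}
\left|\int (1+|x|)^{2\delta} U\cdot BU\,dx\right| \le \|B\|_{L^\infty}\|U\|_{L^2_\delta}^2,
\end{equation*}
and similarly $\left|\int (1+|x|)^{2\delta} U\cdot F\,dx\right|\le \tfrac{1}{2}(\|U\|_{L^2_\delta}^2+\|F\|_{L^2_\delta}^2)$.

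The key step is the transport term. Using the symmetry of $A^a$, I write
\begin{equation*}
2\,U\cdot A^a\partial_a U = \partial_a(U\cdot A^a U) - U\cdot(\partial_a A^a)U,
\end{equation*}
and then integrate the first piece by parts against the weight, picking up
\begin{equation*}
-\int \partial_a\!\left((1+|x|)^{2\delta}\right) U\cdot A^a U\,dx.
\end{equation*}
Since $\left|\partial_a(1+|x|)^{2\delta}\right|\le 2|\delta|(1+|x|)^{2\delta-1}\le 2|\delta|(1+|x|)^{2\delta}$, this is bounded by $2|\delta|\,\|A^a\|_{L^\infty}\|U\|_{L^2_\delta}^2$. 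The remaining piece gives $\|\partial_a A^a\|_{L^\infty}\|U\|_{L^2_\delta}^2$. Summing over $a$ and adding the $B$ and $F$ estimates yields \eqref{eq:energy-estimates:L2} with a constant depending only on the indicated $L^\infty$ norms.

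The main (mild) obstacle is the justification of the integration by parts when $U$ is only in $C^1([0,T],L^2_\delta)$ rather than smooth; this is dispatched by the density argument above, or alternatively by observing that after multiplication by the weight the identity holds in the sense of distributions and the relevant boundary contribution vanishes because $(1+|x|)^{2\delta}U\cdot A^a U \in L^1$. I do not expect subtleties beyond this, since, unlike the higher-norm estimate of Lemma \ref{lem:energy-existence:1}, no commutator estimates are needed here.
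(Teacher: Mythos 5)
Your proof is correct and follows essentially the same route as the paper, which only sketches the idea: differentiate $\|U(t)\|_{L^2_\delta}^2$ in time, substitute the equation, integrate the $A^a$-term by parts using symmetry, and control the resulting terms (including the derivative of the weight) in $L^\infty$--$L^2$. Your explicit tracking of $\partial_a(1+|x|)^{2\delta}$ and the density justification for the integration by parts are details the paper leaves implicit, but the argument is the same.
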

\noindent
\textit{The idea of the proof.} 
Since $U(t)$ is a solution to (\ref{eq:linear}),
\begin{equation*}
  \begin{split}
    &\frac{1}{2}\frac{d}{dt}\|U(t)\|^2_{L_\delta^2}=\int
    (1+|x|)^{2\delta} \left(U(t)\cdot \partial_t U(t)\right)dx \\ = 
    - & \sum_{a=1}^3\int (1+|x|)^{2\delta} \left(U(t)\cdot A^a \partial_a
      U(t)\right)dx-\int (1+|x|)^{2\delta} \left(U(t)\cdot B
      U(t)\right)dx \\ + &\int (1+|x|)^{2\delta} \left(U(t)\cdot
      F\right) dx.
  \end{split}
\end{equation*}
Applying integration by parts, the Cauchy--Schwarz inequality and
$L^\infty$--$L^2$ estimates we obtain (\ref{eq:energy-estimates:L2}).

\subsubsection{Proof of Theorem \ref{thm:existence-quasi}}
\label{sec:proof-theor-refthm}
\begin{proof}
  We are using the  known iteration scheme
  \cite{majda84:_compr_fluid_flow_system_conser
  }. 
  In order to do that we need to approximate the initial data and the
  right hand side of (\ref{eq:neu-existence:21}) by smooth functions. 
  Since $C_0^\infty$ is dense in $H_{s,\delta}$ (see Theorem
  \ref{thm:Triebel} (b) and Proposition \ref{prop:approx}), there are
  two sequences
  $\{U_0^k\}, \{F^k(t,\cdot)\}\subset C_0^\infty(\setR^3)$ such that
  \begin{eqnarray}
    \label{eq:iteration:5}
    \| U_0^0\|_{H_{s+1,\delta}}\leq C_0 \|U_0\|_{H_{s,\delta}},\\
    \label{eq:iteration:6}
    \| U_0^k-U_0\|_{H_{s,\delta}}^2\leq 2^{-k},\\ 
    \label{eq:iteration:7}
    \sup_{0\leq t\leq T^0}\|F^k(t,\cdot)-F(t,\cdot)\|_{H_{s,\delta}}^2\leq 
    2^{-k}.
  \end{eqnarray}
  We set now $U^0(t,x)=U^0_0(x)$ and let $U^{k+1}(t,x)$ be the
  solution to the linear initial value problem
  \begin{equation}
    \label{eq:iteration:1}
    \begin{cases}
      & \displaystyle{\partial_t U^{k+1} + A^a(U^k)\partial_a U^{k+1}
        +
        B(U^k)U^{k+1} = F^k}\\
      & U(0,x)=U_0^k(x)
    \end{cases}.
  \end{equation}
  Since the linear system (\ref{eq:iteration:1}) has $C_0^\infty$
  coefficients, $\{U^k(t,\cdot)\}\subset C_0^\infty(\setR^3)$ (see
  e.g.~\cite{john86:_partial
  }). 
  Hence for each positive $R$ and integer $k$,
  \begin{equation}
    \label{eq:iteration:2}
    T_k:=\sup\{T:\sup_{0\leq t\leq T} \|U^k(t)-U_0^0\|_{H_{s,\delta}}^2\leq 
R^2\}
  \end{equation}
  is finite.  
  
  We now choose $R $ so that
  $(8C_0^2 \|U_0\|_{H_{s,\delta}}^2+2)\leq R^2 $ and prove by
  induction that there is $0<T^*$ such that $T^{*}\leq T_k$ for all
  $k\geq 1$. 
  Set $V^{k+1}=U^{k+1}-U_0^0$, then it satisfies the linear system
  \begin{equation}
    \label{eq:iteration}
    \begin{split}
      \partial_t V^{k+1} + A^a(U^k)\partial_a V^{k+1} + B(U^k)V^{k+1}
      =F^k+ A^a(U^k)\partial_a U_0^0 + B(U^k)U_0^{0},
    \end{split}
  \end{equation}
  with
  \begin{equation*}
    V^{k+1}(0,x)=U_0^k(x)-U_0^0(x).
  \end{equation*}
  We apply Moser type estimates in the $H_{s,\delta}$ spaces,
  Propositions \ref{Moser} and \ref{Algebra}, (\ref{eq:iteration:5})
  and (\ref{eq:iteration:2}), then we conclude that there is a
  positive constant $C_1=C_1(R,\|U_0\|_{H_{s,\delta}})$ such that
  $\|A^a(U^k)\|_{H_{s,\delta}}^2\leq C_1$. 
  Similarly the other terms of (\ref{eq:iteration}) can be bounded by the
  same constant. 
  Applying Lemma \ref{lem:energy-existence:1}, we obtain that
  \begin{equation*}
    \frac{d}{dt}\|V^{k+1}(t)\|_{H_{s,\delta}}^2\leq 
    C_1\left(\|V^{k+1}(t)\|_{H_{s,\delta}}^2+ 
      \|F^{k}(t,\cdot)\|_{H_{s,\delta}}^2\right).
  \end{equation*}
  Then by the Gronwall inequality,
  (\ref{eq:iteration:5})-(\ref{eq:iteration:7}) and
  (\ref{eq:iteration}) we have that
  \begin{equation}
    \label{eq:Gronwall}
    \begin{split}
      & \|V^{k+1}(t)\|_{H_{s,\delta}}^2 \leq
      e^{C_1t}\left(\|U^{k+1}_0-U_0^0\|_{H_{s,\delta}}^2
        +\int_0^t\|F^{k}(t,\cdot)\|_{H_{s,\delta}}^2 d\tau\right)\\
      \leq & e^{C_1t}\left( 2^{-k}+4C_0^2\|U_0\|_{H_{s,\delta}}^2
        +2\int_0^t
        \|F(\tau,\cdot)\|_{H_{s,\delta}}^2d\tau+t2^{-k+1}\right).
    \end{split}
  \end{equation}
  If $t=0$, then the right hand side of (\ref{eq:Gronwall}) is equal
  to $2^{-k}+C_0^2\|U_0\|_{H_{s,\delta}}^2$. 
  Since we have chosen $ (8C_0^2\|U_0\|_{H_{s,\delta}}^2+2)\leq R^2$, there is
  a positive $T^\ast$ such that the right hand side of
  (\ref{eq:Gronwall}) is less than $R^2$.
and hence
  \begin{equation}
  \label{eq:iteration:4}
   \sup_{0\leq t\leq T^*}\|U^k(t) -U_0^0\|_{H_{s,\delta}}^2\leq R^2.
  \end{equation}
Consequently the sequence $\{U^k\}$ is bounded in the $H_{s,\delta}$ norm.

From equation (\ref{eq:iteration:1}), and by the multiplication
estimates in the $H_{s,\delta}$ spaces, we have that
    \begin{equation*}
  \begin{split}
    \|\partial_t U^{k+1}(t)\|_{H_{s-1,\delta+1}} & \leq C\left(\sum_{a=1}^3
       \|\partial_a
      U^{k+1}\|_{H_{s-1,\delta+1}}\| A^a(U^{k})\|_{H_{s,\delta}}+\|
U^{k+1}\|_{H_{s,\delta}}\| B(U^{k})\|_{H_{s,\delta}}\right)\\ & +
     \|F^{k}(t,\cdot)\|_{H_{s,\delta}}.
   \end{split}
 \end{equation*}
 By the Moser type estimate, Proposition \ref{Moser}, the uniform
 bound (\ref{eq:iteration:4}) and the above estimate, we see that
 there is a constant $L$ independent of $k$ such that
   \begin{equation}
   \label{eq:Lip}
    \sup_{0\leq t\leq T^\ast}\|\partial_t U^{k}(t)\|_{H_{s-1,\delta+1}}\leq L.
   \end{equation}
  We show now the contraction in the $L_\delta^2$--norm. 
  More precisely, we claim that there are positive constants
  $\Lambda<1$, $T^{\ast\ast}\leq T^\ast$ and a converging sequence
  $\{\beta_k\}$ such that
  \begin{equation}
    \label{eq:contraction}
    \sup_{0\leq t\leq T^{\ast\ast}}\|U^{k+1}(t)-U^{k}(t)\|_{L_\delta^2}^2\leq 
    \Lambda
    \sup_{0\leq t\leq T^{\ast\ast}}\|U^{k}(t)-U^{k-1}(t)\|_{L_\delta^2}^2+ 
    \sum_{k}\beta_k.
  \end{equation}
  The difference $\left(U^{k+1}-U^{k}\right)$ satisfies the linear
  system
  \begin{equation*}
    \partial _t\left( U^{k+1}-U^{k}\right)+A^a(U^k)\partial_a 
    \left( U^{k+1}-U^{k}\right)+B(U^k)\left( 
      U^{k+1}-U^{k}\right)=\widetilde{F}^k,
  \end{equation*}
  where
  \begin{equation*}
    \widetilde{F}^k=- \left[ 
      A^a(U^{k})-A^a(U^{k-1})\right]\partial_a U^k-
    \left[B(U^{k})-B(U^{k-1}\right] U^k +F^{k}-F^{k-1}.
  \end{equation*}
  In order to apply the $L^2_\delta$--energy estimate, Lemma
  \ref{lem:energy-L:1}, we need to show that
  $\|A^a(U^k)\|_{L^\infty}$, $\|\partial_a A^a(U^k)\|_{L^\infty}$ and
  $\|B(U^k)\|_{L^\infty}$ are bounded by a constant that is
  independent of $k$ and to estimate the $L^2_\delta$ norm of
  $\widetilde{F}^k$. 
  Considering for example $\partial_a A^a(U^k)$ for a fixed index $a$,
  then by the weighted Sobolev inequality, Proposition
  \ref{prop:emb-cont} and Proposition \ref{embedding1},
  \begin{equation*}
    \|\partial_a A^a(U^k)\|_{L^\infty}\leq C \|\partial_a 
    A^a(U^k)\|_{H_{s-1,\delta+1}}\leq C \| A^a(U^k)\|_{H_{s,\delta}}
  \end{equation*}
  holds when $\frac{5}{2}<s$ and $-\frac{3}{2}\leq \delta+1$. 
  In the previous step we showed that
  $ \| A^a(U^k)\|_{H_{s,\delta}}\leq C_1$.
  So we conclude that there is a constant
  $C_2=C_2(R,\|U_0\|_{H_{s,\delta}})$ such that
  \begin{equation*}
   \max\{\|A^a(U^k)\|_{L^\infty}, \|\partial_a A^a(U^k)\|_{L^\infty},
  \|B(U^k)\|_{L^\infty}\}\leq C_2.
  \end{equation*}
  Applying standard difference estimates we obtain that
  \begin{equation*}
    \|\left[A^a(U^k)-A^a(U^{k-1})\right]\partial_a U^k\|_{L^2_\delta}^2\leq 
    \|\partial_a U^{k}\|_{L^\infty}^2\sup\{|\nabla 
    A^a(U)|^2\}\|U^{k}-U^{k-1}\|_{L^2_\delta}^2,
  \end{equation*}
  where the supremum is taken over a ball with a radius that depends
  on $R$ and the initial data. 
  Taking into account (\ref{eq:iteration:5}), (\ref{eq:iteration:7})
  and (\ref{eq:iteration:2}), we see that there is a constant
  $C_3=C_3(R,\|U_0\|_{H_{s,\delta}})$ such that
  \begin{equation*}
    \|\widetilde{F}^k\|_{L^2_\delta}^2\leq C_3 
    \|U^{k}-U^{k-1}\|_{L^2_\delta}^2+6(2^{-k}).
  \end{equation*}
  Hence, by Lemma \ref{lem:energy-L:1}, the Gronwall inequality and
  (\ref{eq:iteration:6}) we obtain that
  \begin{equation}
    \label{eq:L2-Gronwall}
    \begin{split}
      & \|U^{k+1}(t)-U^{k}(t)\|_{L_\delta^2}^2\\ \leq e^{C_2t} &
      \left( \|U_0^{k+1}-U_0^k\|_{L^2_\delta}^2+ C_3\int_0^t
        \|U^{k}(\tau)-U^{k-1}(\tau)\|_{L_\delta^2}^2d\tau+
        t6(2^{-k})\right).
    \end{split}
  \end{equation}
  So we can choose $T^{\ast\ast}$ such that
  $e^{C_2T^{\ast\ast}}C_3T^{\ast\ast}=:\Lambda<1$ and  we set
  \begin{equation*}
   \beta_k=e^{C_2T^{\ast\ast}}(\left(\|U_0^{k+1}-U_0^k\|_{L^2_\delta}^2+T^{
      \ast\ast}6(2^{-k})\right).
  \end{equation*}
Since $\|U_0^{k+1}-U_0^k\|_{L^2_\delta}\lesssim \|U_0^{k+1}-U_0\|_{H_{s,\delta}}
+\|U_0^{k}-U_0\|_{H_{s,\delta}}$, the series $\sum_k \beta_k $ converges by   
(\ref{eq:iteration:6}).

  Having proved (\ref{eq:contraction}), we conclude that $\{U^k\}$ is a
  Cauchy sequence in $L^2_\delta$, and by the intermediate estimate,
  Proposition \ref{prop:intermediate} and the bound
  (\ref{eq:iteration:4}), it is also a Cauchy sequence in
  $H_{s',\delta}$ for any $0<s'<s$.

  Hence $U^k$ converges to $U\in H_{s',\delta}$, and if in addition
  $\frac{5}{2}<s'<s$, then by Sobolev embedding to the continuous,
  Proposition \ref{prop:emb-cont},
  $U\in C^1([0,T^{\ast\ast}],C(\setR^3))$ and it is a classical solution of
  (\ref{eq:neu-existence:21}).
 
  Following Majda
  \cite[Ch.~2]{majda84:_compr_fluid_flow_system_conser
  }, we show the weak limit
\begin{equation}
   \label{eq:weak}
   \lim_{k}\left\langle U^k,\varphi\right\rangle_{s,\delta}=\left\langle 
U,\varphi\right\rangle_{s,\delta} \quad \text{for all}\ \ \varphi\in 
H_{s,\delta}.
\end{equation} 
 Hence $\|U\|_{H_{s,\delta}}\leq \liminf_k \|U^k\|_{H_{s,\delta}}$ and 
consequently $U\in H_{s,\delta}$. 
To prove (\ref{eq:weak}) we take $s''>s $, arbitrary $\epsilon >0$, and by  
Proposition \ref{prop:approx} there is $\widetilde{\varphi}\in H_{s'',\delta}$ 
so that 
\begin{equation*}
 \left\|\varphi-\widetilde{\varphi}\right\|_{H_{s'',\delta}}<\epsilon\quad 
\text{and } \quad \left\|\widetilde{\varphi}\right\|_{H_{s'',\delta}}\leq 
C(\epsilon)\left\|{\varphi}\right\|_{H_{s,\delta}}.
\end{equation*}
Writing
\begin{equation*}
 \left\langle U^k- U,\varphi\right\rangle_{s,\delta}=\left\langle 
U^k-U,\widetilde{\varphi}\right\rangle_{s,\delta}+\left\langle 
U^k-U,\varphi-\widetilde{\varphi}\right\rangle_{s,\delta},
\end{equation*}
then 
\begin{equation*}
 |\left\langle 
U^k-U,\widetilde{\varphi}\right\rangle_{s,\delta}\leq 
\left\|U^k-U\right\|_{H_{s',\delta}}\left\|\widetilde{\varphi}\right\|_{H_{s'',
\delta}}\leq \left\|U^k-U\right\|_{H_{s',\delta}}
C(\epsilon)\left\|{\varphi}\right\|_{H_{s,\delta}}\to 0
\end{equation*}
as $k $ tends to infinity. And by (\ref{eq:iteration:2})
\begin{equation*}
|\left\langle 
U^k-U,\varphi-\widetilde{\varphi}\right\rangle_{s,\delta}|
\leq \left\|U^k-U\right\|_{H_{s,\delta}}
\left\|{\varphi}-\widetilde{\varphi}\right\|_{H_{s,\delta}}
\leq 
\sqrt{2}R\epsilon.
\end{equation*}

  Thus we have  shown the existence of a continuously differentiable solution 
$U$ to (\ref{eq:neu-existence:21}), which by  (\ref{eq:iteration:2}) and 
(\ref{eq:Lip}) belongs to 
  $L^\infty\left([0,T^{\ast\ast}],H_{s,\delta}\right)\cap {\rm
    Lip}\left([0,T^{\ast\ast}],H_{s-1,\delta+1}\right)$
  and  continuous with respect to the weak
  topology. It remains to prove uniqueness and well posedness. 
  The uniqueness is achieved by applying the $L^2_\delta$ energy
  estimates to the difference of two solutions. 
  Since $H_{s,\delta}$ are Hilbert spaces, it suffices to show that
  $\limsup_{t\to 0^+}\|U(t)\|_{H_{s,\delta}}\leq
  \|U_0\|_{H_{s,\delta}}$ in order to establish the well posedness. 
  We refer to
  \cite[Ch.~2]{majda84:_compr_fluid_flow_system_conser
  } and
  \cite[\S5]{karp11
  } for further details.
  This complete the proof of Theorem \ref{thm:existence-quasi}
\end{proof}

Suppose $U$ is a solution to (\ref{eq:neu-existence:21}), then it
follows from the proof of  Theorem \ref {thm:existence-quasi} that
$\{U(t):t\in[0,T]\}$ is contained in a compact set of $\setR^N$. 
Hence, by applying similar arguments as in the proof of Gronwall
inequality (\ref{eq:Gronwall}) to $U(t)$, we obtain the following
Corollary:
\begin{cor}
  \label{cor:1}
  Let $\frac{5}{2}<s$, $-\frac{3}{2}\leq\delta$ and assume that
  $U\in C\left([0,T],H_{s,\delta}\right)\cap C^1\left([0,T],H_{s-1,\delta+1}\right)$ is
  a solution to the Cauchy problem (\ref{eq:neu-existence:21}) such
  that $\|U_0\|_{H_{s,\delta}}\leq M_0$. 
  Then there is a positive constant $C_1$ that depends on $M_0$ such
  that
  \begin{equation}
    \label{eq:energy:1}
    \|U(t)\|_{H_{s,\delta}}^2\leq e^{C_1t}\left(M_0^2+\int_0^t 
      \|F(\tau,\cdot)\|_{H_{s,\delta}}^2d\tau\right).
  \end{equation}
\end{cor}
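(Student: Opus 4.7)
The plan is to apply the energy estimate of Lemma \ref{lem:energy-existence:1} directly to $U(t)$, viewing the quasilinear system as a linear system with coefficients $A^a(U(t,\cdot))$ and $B(U(t,\cdot))$, and then close with Gronwall's inequality. The key input is a uniform bound (depending only on $M_0$) on the $H_{s,\delta}$-norms of these coefficients on $[0,T]$.

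First I would fix $M_0$ and observe that, since $U\in C([0,T],H_{s,\delta})$ with $\frac{5}{2}<s$ and $-\frac{3}{2}\leq\delta$, the Sobolev embedding of Proposition \ref{prop:emb-cont} implies that $\{U(t,x):(t,x)\in[0,T]\times\mathbb{R}^3\}$ is contained in a bounded (in fact compact, after closure) subset of $\mathbb{R}^N$. Combined with the construction in Theorem \ref{thm:existence-quasi} (in particular the uniform bound (\ref{eq:iteration:4}) passed to the limit), this yields $\sup_{t\in[0,T]}\|U(t)\|_{L^\infty}\leq K(M_0)$ for some constant $K(M_0)$.

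Next, since $A^a$ and $B$ are smooth with $A^a(0)=B(0)=0$, the Moser-type estimate of Proposition \ref{Moser} gives
\begin{equation*}
  \|A^a(U(t))\|_{H_{s,\delta}}+\|B(U(t))\|_{H_{s,\delta}}\leq C(M_0)\bigl(1+\|U(t)\|_{L^\infty}^{N}\bigr)\|U(t)\|_{H_{s,\delta}}\leq \widetilde{C}(M_0)\|U(t)\|_{H_{s,\delta}}.
\end{equation*}
Plugging these bounds into Lemma \ref{lem:energy-existence:1} applied to the linear system (\ref{eq:linear}) with the above coefficients, and using the equivalence of $\langle U,U\rangle_{s,\delta}$ with $\|U\|_{H_{s,\delta}}^2$ from Proposition \ref{prop:equivalence:1}, yields
\begin{equation*}
  \frac{d}{dt}\|U(t)\|_{H_{s,\delta}}^2\leq C_1\bigl(\|U(t)\|_{H_{s,\delta}}^2+\|F(t,\cdot)\|_{H_{s,\delta}}^2\bigr),
\end{equation*}
with $C_1$ depending on $M_0$ through $\widetilde{C}(M_0)$ and the uniform bound $\|U(t)\|_{H_{s,\delta}}\leq R$ provided by Theorem \ref{thm:existence-quasi}. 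Integrating via Gronwall's inequality and using $\|U_0\|_{H_{s,\delta}}^2\leq M_0^2$ gives (\ref{eq:energy:1}).

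The only mild subtlety, rather than a real obstacle, is that Lemma \ref{lem:energy-existence:1} is stated for $C^1([0,T],H_{s,\delta})$ solutions whereas here $U\in C([0,T],H_{s,\delta})\cap C^1([0,T],H_{s-1,\delta+1})$. This is handled exactly as in the proof of Theorem \ref{thm:existence-quasi}: one applies the estimate to the smooth iterates $U^k$ (which converge to $U$ strongly in $H_{s',\delta}$ and weakly in $H_{s,\delta}$), obtains the bound uniformly in $k$, and passes to the limit using lower semicontinuity of the $H_{s,\delta}$-norm with respect to weak convergence (as in (\ref{eq:weak})). This yields (\ref{eq:energy:1}) for the limit $U$ with a constant $C_1$ depending only on $M_0$.
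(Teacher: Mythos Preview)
Your proposal is correct and follows essentially the same route as the paper: the paper notes that from the proof of Theorem \ref{thm:existence-quasi} the values $\{U(t)\}$ lie in a compact set of $\mathbb{R}^N$, and then applies ``similar arguments as in the proof of the Gronwall inequality (\ref{eq:Gronwall}),'' which is precisely the combination of the Moser estimate for the coefficients, Lemma \ref{lem:energy-existence:1}, and Gronwall that you spell out. Your extra remark on the regularity discrepancy in the hypothesis of Lemma \ref{lem:energy-existence:1} and its resolution via the approximating sequence is a welcome clarification that the paper leaves implicit.
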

Likewise, if $U_i$ is a solution to
\begin{equation}
\label{eq:neu-existence:22}
  \begin{cases}
    & \displaystyle{\partial_t U_i + A^a(U_i)\partial_a U_i +
      B(U_i)U_i = F_i}\\
    & U_i(0,x)=U_0(x)
  \end{cases},\qquad i=1,2,
\end{equation}
then in a similar manner as in the proof of the $L^2_\delta$ Gronwall
inequality (\ref{eq:L2-Gronwall}) we get:
\begin{cor}
  \label{cor:2}
  Let $\frac{5}{2}<s$, $-\frac{3}{2}\leq\delta$ and suppose that
  $U_1,U_2\in C\left([0,T],H_{s,\delta}\right)\cap
  C^1\left([0,T],H_{s-1,\delta+1}\right)$
  are solutions to the Cauchy Problem (\ref{eq:neu-existence:22}) with
  the same initial data. 
  Then there is positive constants $C_2 $ such that
  \begin{equation}
    \label{eq:lower}
    \|(U_1-U_2)(t)\|_{L^2_\delta}^2\leq e^{C_2t}\int_0^t 
    \|(F_1-F_2)(\tau)\|_{L^2_\delta}^2d\tau.
  \end{equation}
\end{cor}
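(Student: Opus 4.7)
The plan is to mimic the contraction step from the proof of Theorem \ref{thm:existence-quasi}, but applied to the difference of two actual solutions rather than to two consecutive iterates. Let $W=U_1-U_2$. Subtracting the two equations in (\ref{eq:neu-existence:22}) and adding and subtracting $A^a(U_1)\partial_a U_2$ and $B(U_1)U_2$ shows that $W$ solves the linear symmetric hyperbolic system
\begin{equation*}
  \partial_t W + A^a(U_1)\partial_a W + B(U_1)W = \widetilde{F},
\end{equation*}
with
\begin{equation*}
  \widetilde{F} = -[A^a(U_1)-A^a(U_2)]\partial_a U_2 - [B(U_1)-B(U_2)]U_2 + (F_1-F_2),
\end{equation*}
and initial data $W(0,\cdot)=0$, since $U_1$ and $U_2$ start from the same $U_0$.

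Next I would apply the lower--norm energy estimate, Lemma \ref{lem:energy-L:1}, to this linear system. Its hypotheses require uniform $L^\infty$ control of $A^a(U_1)$, $\partial_a A^a(U_1)$ and $B(U_1)$. Since $U_1\in C([0,T],H_{s,\delta})$ with $\frac{5}{2}<s$ and $-\frac{3}{2}\leq\delta$, the Sobolev embedding Proposition \ref{prop:emb-cont} together with Proposition \ref{embedding1} and the Moser estimate Proposition \ref{Moser} yield, exactly as in the contraction step of the proof of Theorem \ref{thm:existence-quasi}, a constant $C_2=C_2(M_0)$ bounding all three norms uniformly on $[0,T]$, where $M_0$ controls $\|U_0\|_{H_{s,\delta}}$.

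The remaining point is to estimate $\widetilde{F}$ in $L^2_\delta$. By the standard pointwise difference bound $|A^a(U_1)-A^a(U_2)|\leq \sup|\nabla A^a|\,|U_1-U_2|$ (and similarly for $B$), together with $\|\partial_a U_2\|_{L^\infty}\lesssim \|U_2\|_{H_{s,\delta}}$ and the analogous control for $U_2$, one obtains
\begin{equation*}
  \|\widetilde{F}(t)\|_{L^2_\delta}^2 \leq C\,\|W(t)\|_{L^2_\delta}^2 + 2\,\|(F_1-F_2)(t)\|_{L^2_\delta}^2,
\end{equation*}
where $C$ depends only on $M_0$ and the smoothness of $A^a, B$. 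Plugging this into (\ref{eq:energy-estimates:L2}), absorbing the $\|W\|_{L^2_\delta}^2$ term and applying Gronwall's inequality with the vanishing initial datum $W(0)=0$ yields
\begin{equation*}
  \|W(t)\|_{L^2_\delta}^2 \leq e^{C_2 t}\int_0^t \|(F_1-F_2)(\tau)\|_{L^2_\delta}^2\,d\tau,
\end{equation*}
which is exactly (\ref{eq:lower}).

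The only delicate point is the uniform control of the coefficients $A^a(U_i), \partial_a A^a(U_i), B(U_i)$ in $L^\infty$; this is precisely the same issue that had to be resolved in the contraction step of Theorem \ref{thm:existence-quasi}, and the argument there transfers verbatim, so no new estimate is needed. Everything else is a routine Gronwall argument in the lower norm $L^2_\delta$.
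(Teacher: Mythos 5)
Your proof is correct and follows essentially the same route as the paper: the paper itself only sketches the argument for Corollary~\ref{cor:2}, saying it is obtained ``in a similar manner as in the proof of the $L^2_\delta$ Gronwall inequality~(\ref{eq:L2-Gronwall})'', and your write-up is exactly that argument carried out for the difference $W=U_1-U_2$ of two solutions, using the decomposition into a linear system with modified source $\widetilde F$, the $L^2_\delta$ energy estimate of Lemma~\ref{lem:energy-L:1}, the Sobolev embedding to control the coefficients, and Gronwall with $W(0)=0$.
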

\subsection{The elliptic estimate}
\label{sec:elliptic-estimate}
We turn now to the solution of Poisson equation
\begin{equation}
  \label{eq:Poisson:2}
  \Delta\phi =4\pi\rho,
\end{equation}
which is coupled to the Euler--Poisson system. 
Since we consider a density function $\rho$ which may not have compact
support but could fall off at infinity, the ordinary Sobolev spaces
$H^{s}$ in $\setR^n$ are not an appropriate choice.
We chose to use weighted fractional Sobolev spaces, in which the
Laplace operator is invertible, and which are the only known spaces to
solve the Einstein--Euler system in this setting and hence could be
used to study the Newtonian limit.
Nirenberg and Walker initiated the study of elliptic equations in the
$H_{m,\delta}$ spaces of integer order
\cite{nirenberg73:_null_spaces_ellip_differ_operat_r}. 
Cantor \cite{cantor75:_spaces_funct_condit_r} proved that
\begin{equation}
  \label{eq:Delta}
  \Delta: H_{m,\delta}\to H_{m-2,\delta+2} 
\end{equation}
is an isomorphism in $\setR^3$ if $m$ is an integer and
$-\frac{3}{2}<\delta<-\frac{1}{2}$. 
McOwen showed that the operator $\Delta$ is a Fredholm operator if
$m=2$ and $\delta\neq -\frac{1}{2}+k, k\in \setZ$,
\cite{mcowen79:_behav_sobol_spaces}. 
Choquet--Bruhat and Christodoulou also proved the isomorphism of
(\ref{eq:Delta}) for $-\frac{3}{2}<\delta<-\frac{1}{2}$  in the weighted spaces 
of integer order
\cite{choquet--bruhat81:_ellip_system_h_spaces_manif_euclid_infin}. 
Using interpolation property of the $H_{s,\delta}$, Theorem
\ref{thm:Triebel} (d), we obtain:
\begin{thm}[(Cantor) Isomorphism of the Laplace operator]
  \label{thr:proto-euler-poisson-banach:1}
  Let $2\leq s $ be any real number and
  $\delta\in (-\frac{3}{2},-\frac{1}{2})$, then
  \begin{equation*}
    \Delta: H_{s,\delta}\to H_{s-2,\delta+2} 
  \end{equation*}
  is isomorphism. 
  Moreover, there is a constant $C$ such that
  \begin{equation*}
    \| u\|_{H_{s,\delta}}\leq C \|\Delta u\|_{H_{s-2,\delta+2}}\qquad \text{for 
all 
    }\ 
    u\in H_{s,\delta}.
  \end{equation*}
\end{thm}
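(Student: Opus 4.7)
The plan is to bootstrap the integer-order Cantor result to all real $s\geq 2$ via the complex interpolation identity from Theorem~\ref{thm:Triebel}(d). Cantor~\cite{cantor75:_spaces_funct_condit_r} and Choquet-Bruhat--Christodoulou~\cite{choquet--bruhat81:_ellip_system_h_spaces_manif_euclid_infin} have already shown that for every integer $m\geq 2$ and every $\delta\in(-\tfrac{3}{2},-\tfrac{1}{2})$ the map $\Delta:H_{m,\delta}\to H_{m-2,\delta+2}$ is a bounded isomorphism, and this is the base case on which the entire argument stands. If $s$ is itself an integer, there is nothing to prove.

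Given non-integer $s\geq 2$, I would pick integers $m_0,m_1\geq 2$ with $m_0\leq s\leq m_1$ and set $\theta=(m_1-s)/(m_1-m_0)\in(0,1)$, so that $s=\theta m_0+(1-\theta)m_1$. Applying Theorem~\ref{thm:Triebel}(d) with weights $\delta_0=\delta_1=\delta$ for the source pair and $\delta_0=\delta_1=\delta+2$ for the target pair yields the identifications
\begin{align*}
[H_{m_0,\delta},H_{m_1,\delta}]_{\theta} &= H_{s,\delta},\\
[H_{m_0-2,\delta+2},H_{m_1-2,\delta+2}]_{\theta} &= H_{s-2,\delta+2}.
\end{align*}
Since $\Delta$ is a bounded isomorphism between each pair of endpoint spaces, the standard interpolation theorem for bounded linear operators yields that $\Delta:H_{s,\delta}\to H_{s-2,\delta+2}$ is bounded and that the common extension of the integer-order inverses furnishes a bounded two-sided inverse. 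This delivers both surjectivity and the desired estimate $\|u\|_{H_{s,\delta}}\leq C\|\Delta u\|_{H_{s-2,\delta+2}}$ at once.

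Injectivity is handled separately by the monotonicity of the scale: Proposition~\ref{monoton} together with $s\geq 2$ gives the continuous inclusion $H_{s,\delta}\hookrightarrow H_{2,\delta}$, so any $u\in H_{s,\delta}$ with $\Delta u=0$ lies in $H_{2,\delta}$ and must vanish by the integer-order isomorphism. The only conceptual point to double-check is that the complex interpolation of Theorem~\ref{thm:Triebel}(d) is compatible with the uniform weight shift $\delta\mapsto\delta+2$ so that $\Delta$ genuinely acts on a Banach couple; this is automatic since within each pair the two weight parameters coincide. Accordingly I do not anticipate a substantive obstacle: the proof reduces to a textbook application of interpolation once the integer-order result and Triebel's identification of the interpolation spaces are granted.
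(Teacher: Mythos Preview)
Your proposal is correct and follows essentially the same approach as the paper: the authors simply remark that Cantor and Choquet-Bruhat--Christodoulou established the integer-order isomorphism and that applying the interpolation property Theorem~\ref{thm:Triebel}(d) yields the statement for fractional $s$, without supplying further detail. Your write-up in fact spells out more of the argument (the choice of endpoints, the interpolation of the inverse, the separate injectivity check via Proposition~\ref{monoton}) than the paper itself does.
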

Recall that equation (\ref{eq:iteration-scheme:2}) actually contains
the gradient of the solution of the Poisson equations. 
By the embedding (\ref{embedding1}), there is a constant $C_s$ such
that $\|\nabla u\|_{H_{s-1,\delta+1}}\leq C_s \| u\|_{H_{s,\delta}}$. 
So we conclude that there is a constant $C_e$ such that for any solution $\phi$ 
to the Poisson equation
(\ref{eq:Poisson:2}) satisfies the inequality
\begin{equation}
  \label{eq:Poisson:3}
  \|\nabla\phi\|_{H_{s-1,\delta+1}}\leq  C_e \|\rho\|_{H_{s-2,\delta+2}}.
\end{equation}
\subsection{The nonlinear power estimate}
\label{sec:nonlinear-estimate}
We turn now to nonlinear estimates of powers $u^\beta$ in the
$H_{s,\delta}$ spaces. 
Such type of estimates appears in several stages of the proofs, as well
as difference estimates in the $L^2_\delta$ spaces. 
Note that the symmetric hyperbolic system is considered in the
$H_{s,\delta}$ spaces with the weight $-\frac{3}{2}\leq\delta$. 
However for the Poisson equation, the source term $\rho$ needs to be in
$H_{s-1,\delta+2}$ and so that the weight $\delta$ has to be in the range of the
isomorphism of the Laplace operator, that is,
$\delta\in (-\frac{3}{2},-\frac{1}{2})$. 
Recall that the density is expressed by the Makino variable as
follows:
\begin{equation}
  \label{eq:non:6}
  \rho = c_{K\gamma}w^{\frac{2}{\gamma-1}},\qquad 
c_{K,\gamma}=\left(\frac{2\sqrt{K\gamma}}{\gamma-1}\right)^{\frac{-2}{\gamma-1}}
.
\end{equation}
Let us denote $\frac{\gamma-1}{2}$ by $\beta$, now given a nonnegative function
$w \in H_{s,\delta}$, we have to prove that
$w^\beta\in H_{s-1,\delta+2}$ for some
$\delta\in (-\frac{3}{2},-\frac{1}{2})$. 
The main tool of the proof is Lemma \ref{lem:3}.
\begin{prop}[Nonlinear estimate of power of functions]
  \label{prop:power}
  Suppose that $ w\in H_{s,\delta}$, $0\leq w$ and $\beta$ is a real number 
greater or
  equal $2$. 
  Then
  \begin{enumerate}
    \item If $\beta$ is an integer, $\frac{3}{2}<s$ and
    $\frac{2}{\beta-1}-\frac{3}{2}\leq \delta$, then
    \begin{equation}
      \label{eq:non:1}
      \|w^\beta\|_{H_{s-1,\delta+2}}\leq C_n 
\left(\|w\|_{H_{s,\delta}}\right)^\beta.
    \end{equation}
    \item If $\beta\not\in \setN$,
    $\frac{5}{2}<s<\beta-[\beta]+\frac{5}{2}$ and
    $\frac{2}{[\beta]-1}-\frac{3}{2}\leq \delta$, then
    \begin{equation}
      \label{eq:non:2}
      \|w^\beta\|_{H_{s-1,\delta+2}}\leq C_n 
\left(\|w\|_{H_{s,\delta}}\right)^{[\beta]}.
    \end{equation}
  \end{enumerate}
\end{prop}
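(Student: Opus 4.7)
The plan is to realize $w^\beta$ as a product of nonnegative factors and invoke Lemma \ref{lem:3}, whose conclusion provides exactly the improvement in the decay exponent (from $\delta$ to $\delta+2$) that is needed. Proposition \ref{Kateb} (Kateb's estimate) will absorb the fractional part of $\beta$ in the second case.

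For the integer case I would apply Lemma \ref{lem:3} directly with $m=\beta$ and $u_1=\cdots=u_\beta=w\in H_{s,\delta}$, which is legal since $s>\tfrac{3}{2}$. The lemma produces
\[
\|w^\beta\|_{H_{s,\delta_0}}\le C\|w\|_{H_{s,\delta}}^\beta,\qquad \delta_0:=\beta\delta+(\beta-1)\tfrac{3}{2}.
\]
The hypothesis $\tfrac{2}{\beta-1}-\tfrac{3}{2}\le\delta$ is exactly the algebraic rearrangement of $\delta+2\le\delta_0$, so the monotonicity of the norm (Proposition \ref{monoton}) yields $\|w^\beta\|_{H_{s-1,\delta+2}}\le\|w^\beta\|_{H_{s,\delta_0}}$ and hence (\ref{eq:non:1}).

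For the non-integer case, set $\beta_0:=\beta-[\beta]\in(0,1)$ and factor
\[
 w^\beta = w^{[\beta]-1}\cdot w^{1+\beta_0},
\]
which is a product of $[\beta]$ nonnegative functions. The upper bound $s<\tfrac{5}{2}+\beta-[\beta]$ rearranges to $s-1<(1+\beta_0)+\tfrac{1}{2}$, which is precisely the admissibility range of Proposition \ref{Kateb} applied with exponent $1+\beta_0>1$; thus $\|w^{1+\beta_0}\|_{H_{s-1,\delta}}\le C(\|w\|_{L^\infty})\|w\|_{H_{s-1,\delta}}$. I would then apply Lemma \ref{lem:3} at regularity $s-1>\tfrac{3}{2}$ (this is where the hypothesis $s>\tfrac{5}{2}$ enters) to the $[\beta]$-fold product consisting of $[\beta]-1$ copies of $w$ and one copy of $w^{1+\beta_0}$, each viewed as an element of $H_{s-1,\delta}$. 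The weight condition of Lemma \ref{lem:3} then reads $\delta+2\le[\beta]\delta+([\beta]-1)\tfrac{3}{2}$, i.e.\ $\delta\ge\tfrac{2}{[\beta]-1}-\tfrac{3}{2}$, exactly the stated hypothesis. Combining this with the Sobolev embedding $\|w\|_{L^\infty}\lesssim\|w\|_{H_{s,\delta}}$ from Proposition \ref{prop:emb-cont} (valid since $\delta>-\tfrac{3}{2}$) and the monotonicity $\|w\|_{H_{s-1,\delta}}\le\|w\|_{H_{s,\delta}}$ produces (\ref{eq:non:2}).

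The main obstacle I expect is the bookkeeping in the non-integer case: one is forced to lose a derivative. Working at regularity $s$ throughout (as in the integer case) is impossible because Proposition \ref{Kateb} accepts the exponent $1+\beta_0$ only when $s<\tfrac{3}{2}+\beta_0$, which conflicts with $s>\tfrac{5}{2}$. Dropping to regularity $s-1$ reconciles both constraints, but then Lemma \ref{lem:3} still demands $s-1>\tfrac{3}{2}$, which accounts for the lower bound $s>\tfrac{5}{2}$ in (ii). The upper bound on $s$ in (ii) is inherited from Kateb's restriction and is therefore sharp within this approach.
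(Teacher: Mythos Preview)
Your proposal is correct and follows essentially the same route as the paper: in the integer case you apply Lemma~\ref{lem:3} to the $\beta$-fold product of $w$ and read off the weight condition, and in the non-integer case you split off a factor $w^{1+\beta_0}=w^{\sigma}$, control it via Proposition~\ref{Kateb} at regularity $s-1$, and then apply Lemma~\ref{lem:3} to the remaining $[\beta]$-fold product, exactly as the paper does. Your explicit bookkeeping of the monotonicity step and the Sobolev embedding for the $L^\infty$ bound in Kateb's constant is a welcome clarification of points the paper leaves implicit.
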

\begin{rem}[Convention about constants]
  \label{rem:section-0005b:1}
  We have denoted the constant in this proposition explicitly by
  $C_n$, we will do the same for some other inequalities, because it
  comes in handy in the proof of the main theorem. 
  However in the rest of the paper we will denote constants by the
  generic letter $C$.
\end{rem}

In order that $\delta$ will belong to the range of isomorphism, we need that 
$(-\frac{3}{2},-\frac{1}{2})\cap [\frac{2}{[\beta]-1}-\frac{3}{2},
\infty)\neq\emptyset$. 
Taking into account that $\beta=\frac{2}{\gamma-1}$,  that gives
$1<\gamma < \frac{5}{3}$.
\begin{proof}
  If $\beta$ is an integer, then we apply Lemma \ref{lem:3} with
  $u_i=w$, $i=1,\ldots,\beta$. 
  That requires that $(\delta+2)\leq \beta\delta+(\beta-1)\frac{3}{2}$
  and $\frac{3}{2}<s$ and hence we get (\ref{eq:non:1}). 
  For the second part we set $\sigma=\beta-[\beta]+1$, then we apply
  Lemma \ref{lem:3} with $m=[\beta]$, $u_i=w$ for
  $i=1,\ldots,[\beta]-1$ and $u_m=w^\sigma$, and get that
  \begin{equation}
    \label{eq:non:3}
    \|w^\beta\|_{H_{s-1,\delta+2}}\leq 
    C\left(\|w\|_{H_{s,\delta}}\right)^{[\beta]-1}
    \|w^\sigma\|_{H_{s-1,\delta}},
  \end{equation}
  provided that $(\delta+2)\leq [\beta]\delta+([\beta]-1)\frac{3}{2}$. 
  Now by Kateb's estimate in the weighted spaces, Proposition
  \ref{Kateb}, we have that for $\frac{3}{2}<s-1<\sigma+\frac{1}{2}$,
  \begin{equation*}
    \|w^\sigma\|_{H_{s-1,\delta}}\leq C \|w\|_{H_{s-1,\delta}}\leq C 
    \|w\|_{H_{s,\delta}}.
  \end{equation*}
  Inserting it in (\ref{eq:non:3}) we get (\ref{eq:non:2}) with $C_n=C^2$.
  
\end{proof}

For the finite mass we need by Proposition \ref {prop: L-1} that
$\rho$, which is given by (\ref{eq:non:6}), belongs to
$H_{s',\delta'}$, for some $\delta'>\frac{3}{2}$ and 
$0\leq s'\leq s$.
\begin{prop}
  \label{prop:power:2}
  Suppose $w\in H_{s,\delta}$, $w\geq 0$, $2\leq \beta$, and
  $\frac{5}{2}<s$ if $\beta$ is an integer and
  $\frac{5}{2}<s<\frac{5}{2}+\beta-[\beta]$ otherwise. 
  If $\frac{3}{[\beta]}-\frac{3}{2}<\delta$, then
  \begin{equation}
  \label{eq:non:4}
    \|w^\beta\|_{H_{s-1,\delta'}}\leq C 
    \left(\|w\|_{H_{s,\delta}}\right)^{[\beta]}
  \end{equation}
  for some $\delta'>\frac{3}{2}$.
\end{prop}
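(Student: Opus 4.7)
\medskip

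\noindent\textbf{Plan of proof.} The structure mirrors that of Proposition \ref{prop:power}: the gain in the weight comes from Lemma \ref{lem:3}, and the extra fractional power (in the non-integer case) is absorbed by Kateb's estimate, Proposition \ref{Kateb}. The goal is to exhibit some $\delta'>\tfrac{3}{2}$ for which the estimate \eqref{eq:non:4} holds. Since $w\in H_{s,\delta}\subset H_{s-1,\delta}$ by monotonicity (Proposition \ref{monoton}), and $s-1>\tfrac{3}{2}$, we are in a regime where Lemma \ref{lem:3} is applicable at the smoothness level $s-1$.

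\smallskip

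\noindent\textbf{Integer case.} I would apply Lemma \ref{lem:3} at smoothness $s-1$ with $m=\beta=[\beta]$ factors all equal to $w\in H_{s-1,\delta}$. The admissible weight is any $\delta'$ with
\[
\delta'\leq \beta\delta+(\beta-1)\tfrac{3}{2}.
\]
The right hand side exceeds $\tfrac{3}{2}$ precisely when $\delta>\tfrac{3}{\beta}-\tfrac{3}{2}=\tfrac{3}{[\beta]}-\tfrac{3}{2}$, which is the hypothesis. One then picks $\delta'$ in the nonempty interval $(\tfrac{3}{2},\beta\delta+(\beta-1)\tfrac{3}{2}]$ and obtains \eqref{eq:non:4} immediately from Lemma \ref{lem:3}.

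\smallskip

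\noindent\textbf{Non-integer case.} Write $\sigma=1+\beta-[\beta]\in(1,2)$, so that $w^\beta=w^{[\beta]-1}\cdot w^\sigma$. I would apply Lemma \ref{lem:3} at smoothness $s-1$ with $m=[\beta]$ factors: namely $[\beta]-1$ copies of $w\in H_{s-1,\delta}$ together with a final factor $w^\sigma$. The new factor is handled by Kateb's estimate (Proposition \ref{Kateb}): the condition $0<s-1<\sigma+\tfrac12$ reads $s<\beta-[\beta]+\tfrac{5}{2}$, which is exactly the hypothesis, and (recalling that $\delta>-\tfrac{3}{2}$ so that $w\in L^\infty$ by Proposition \ref{prop:emb-cont}) yields
\[
\|w^{\sigma}\|_{H_{s-1,\delta}}\leq C\|w\|_{H_{s-1,\delta}}\leq C\|w\|_{H_{s,\delta}}.
\]
Lemma \ref{lem:3} then produces
\[
\|w^\beta\|_{H_{s-1,\delta'}}\leq C\,\|w\|_{H_{s-1,\delta}}^{[\beta]-1}\,\|w^{\sigma}\|_{H_{s-1,\delta}}\leq C\,\|w\|_{H_{s,\delta}}^{[\beta]}
\]
for any $\delta'\leq [\beta]\delta+([\beta]-1)\tfrac{3}{2}$. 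As in the integer case, the hypothesis $\delta>\tfrac{3}{[\beta]}-\tfrac{3}{2}$ guarantees that $[\beta]\delta+([\beta]-1)\tfrac{3}{2}>\tfrac{3}{2}$, so such a $\delta'>\tfrac{3}{2}$ can be chosen.

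\smallskip

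\noindent\textbf{Expected obstacles.} The argument is conceptually straightforward once Lemma \ref{lem:3} and Proposition \ref{Kateb} are in hand; the only bookkeeping point that needs some care is verifying that the threshold $\tfrac{3}{[\beta]}-\tfrac{3}{2}$ in the statement matches the sharp condition arising from Lemma \ref{lem:3} in both cases, and that the intervals of admissible $s$ in the non-integer case are exactly those in which Kateb's inequality yields linear control of the fractional power $w^\sigma$. Everything else is routine.
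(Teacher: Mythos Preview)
Your proof is correct and follows essentially the same route as the paper: apply Lemma \ref{lem:3} with $[\beta]$ factors (all equal to $w$ in the integer case, and $[\beta]-1$ copies of $w$ together with $w^{\sigma}$, $\sigma=\beta-[\beta]+1$, in the non-integer case), invoke Proposition \ref{Kateb} to control $w^{\sigma}$ under the stated restriction on $s$, and then read off the weight condition $\frac{3}{2}<\delta'\leq [\beta]\delta+([\beta]-1)\tfrac{3}{2}$, which is nonempty precisely when $\delta>\tfrac{3}{[\beta]}-\tfrac{3}{2}$. Your explicit remark that Lemma \ref{lem:3} is applied at regularity $s-1$ (legitimate since $s-1>\tfrac{3}{2}$) is a helpful clarification that the paper leaves implicit.
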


\begin{proof}
 The proof is similar to the previous proposition. If $\beta$ is an integer, 
then we apply Lemma \ref{lem:3} with $u_i=w$, $i=1,\ldots,\beta$ and we get 
(\ref{eq:non:4}) under the condition $\frac{3}{2}<\delta'\leq 
\beta\delta+(\beta-1)\frac{3}{2}$. In case $\beta$ is not an integer, then we 
set $\sigma=\beta-[\beta]+1$ and apply Lemma \ref{lem:3} with $u_i=w$, 
$i=1,\ldots,[\beta]-1$, and $u_m=w^\sigma$, and  with the combination of 
Proposition  \ref{Kateb}, we obtain (\ref{eq:non:4}) under the condition 
$\frac{3}{2}<\delta'\leq [\beta]\delta+([\beta]-1)\frac{3}{2}$. So in both 
cases, we have the condition $\frac{3}{[\beta]}-\frac{3}{2}<\delta$.
 
\end{proof}

Note that in order that $\delta$ will be in the range of isomorphism, we
require that $\frac{3}{[\beta]}-\frac{3}{2}<-\frac{1}{2}$. 
This implies that $\frac{2}{\gamma-1}=\beta\geq[\beta]>3$, or 
$1<\gamma<\frac{5}{3}$.

\subsection{Difference estimates of powers}
\label{diff-estimates}
We encounter the following  difficulty concerning the  $L^2_{\delta}$
difference estimate. Namely, by inequality (\ref{eq:Poisson:3}), 
(\ref{eq:non:6}) and Proposition
\ref{equivalence}
 \begin{equation}
  \label{eq:difference:1}
  \begin{split}
    \|\nabla\phi_1-\nabla\phi_2\|_{L^2_\delta}\leq C
    \|\nabla\phi_1-\nabla\phi_2\|_{H_{1,\delta+1}}\\
    \leq  C C_e \|\rho_1-\rho_2\|_{H_{0,\delta+2}}= C C_e c_{K,\gamma}
    \|w_1^\beta-w_2^\beta\|_{L^2_{\delta+2}},
  \end{split}
\end{equation}
where $\beta=\frac{2}{\gamma-1}$. 
The problem  is that in (\ref{eq:difference:1}) we have
a difference in the $L^2_{\delta+2}$ norm, while in (\ref{eq:lower}) we
need the $L^2_{\delta}$ norm.
To overcome this problem we shall use a embedding property as given by
Proposition \ref{prop:emb-cont}.
\begin{prop}[Nonlinear estimate for the differences of two solutions]
  \label{prop:section-0005b:1}
  Under the condition of Proposition \ref{prop:power} the following
  estimate holds
  \begin{equation}
    \label{eq:section-0005b:1}
    \|w_1^\beta-w_2^\beta\|_{L^2_{\delta+2}}\leq C_d \|w_1-w_2\|_{L^2_{\delta}}
  \end{equation}
  where the constant $C_d^2\leq 
C\frac{\beta^2}{2}\left(\|w_1\|_{H_{s,\delta}}^{2(\beta-1)}+\|w_2\|_{H_{s,\delta
} }^{2(\beta-1)}\right)$.
\end{prop}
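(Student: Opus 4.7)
The plan is to exploit the elementary pointwise inequality
\[
|w_1^\beta - w_2^\beta| \leq \beta\bigl(w_1^{\beta-1}+w_2^{\beta-1}\bigr)|w_1-w_2|,
\]
which holds for nonnegative $w_i$ and $\beta\geq 1$; it follows from the mean value identity $w_1^\beta - w_2^\beta = \beta(w_1-w_2)\int_0^1\bigl(tw_1+(1-t)w_2\bigr)^{\beta-1}dt$ together with the monotonicity of $t\mapsto t^{\beta-1}$. The key next move is to absorb the extra weight $(1+|x|)^4$ that separates $L^2_{\delta+2}$ from $L^2_\delta$ into a weighted $L^\infty$-norm of $w_i^{\beta-1}$. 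Splitting $(1+|x|)^{2(\delta+2)} = (1+|x|)^{2\delta}\cdot(1+|x|)^4$ under the integral yields
\[
\bigl\|w_i^{\beta-1}(w_1-w_2)\bigr\|_{L^2_{\delta+2}} \leq \bigl\|w_i^{\beta-1}\bigr\|_{L^\infty_{2}}\|w_1-w_2\|_{L^2_\delta}.
\]

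The main work is then to bound $\|w_i^{\beta-1}\|_{L^\infty_2}$ by $C\|w_i\|_{H_{s,\delta}}^{\beta-1}$. By the weighted Sobolev embedding (Proposition \ref{prop:emb-cont}(i)), one has $\|u\|_{L^\infty_2}\lesssim \|u\|_{H_{s',\delta'}}$ provided $s'>\tfrac{3}{2}$ and $\delta'\geq\tfrac{1}{2}$. Thus it suffices to bound $\|w_i^{\beta-1}\|_{H_{s',\delta'}}$ for suitable $(s',\delta')$ — which is exactly what the proof of Proposition \ref{prop:power} delivers, except with the exponent $\beta-1$ in place of $\beta$. Concretely, for integer $\beta$ I would apply Lemma \ref{lem:3} with $\beta-1$ factors all equal to $w_i$, producing the weight $\delta' = (\beta-1)\delta + (\beta-2)\tfrac{3}{2}$; for non-integer $\beta$ I would set $\sigma = \beta-[\beta]+1\in(1,2)$, factor $w_i^{\beta-1} = w_i^{[\beta]-2}\cdot w_i^\sigma$, apply Lemma \ref{lem:3} to the $[\beta]-1$ factors and then control the remaining $w_i^\sigma$ through Kateb's estimate (Proposition \ref{Kateb}), producing $\delta' = ([\beta]-1)\delta + ([\beta]-2)\tfrac{3}{2}$.

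In both cases the requirement $\delta'\geq\tfrac{1}{2}$ translates, after an easy algebraic manipulation, precisely into $\delta\geq \frac{2}{[\beta]-1}-\frac{3}{2}$, which is the standing hypothesis of Proposition \ref{prop:power}. The condition $\frac{5}{2}<s$ guarantees both $s-1>\tfrac{3}{2}$ (so Sobolev embedding is available at order $s-1$, or indeed $s$) and, in the non-integer case, the compatibility $s-1<\sigma+\tfrac{1}{2}$ needed to invoke Kateb, since $\frac{5}{2}<s<\beta-[\beta]+\frac{5}{2}$ is the assumption inherited from Proposition \ref{prop:power}(ii).

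Collecting everything,
\[
\|w_1^\beta-w_2^\beta\|_{L^2_{\delta+2}} \leq \beta\bigl(\|w_1^{\beta-1}\|_{L^\infty_2}+\|w_2^{\beta-1}\|_{L^\infty_2}\bigr)\|w_1-w_2\|_{L^2_\delta} \leq C\beta\bigl(\|w_1\|_{H_{s,\delta}}^{\beta-1}+\|w_2\|_{H_{s,\delta}}^{\beta-1}\bigr)\|w_1-w_2\|_{L^2_\delta},
\]
and the elementary inequality $(a+b)^2\leq 2(a^2+b^2)$ yields the claimed form of $C_d^2$. The main obstacle is book-keeping: one must check that the admissibility bands for Lemma \ref{lem:3}, Kateb's estimate and the Sobolev embedding intersect within the hypothesized ranges of $s$ and $\delta$, and that the condition on $\delta$ inherited from Proposition \ref{prop:power} yields exactly the critical weight $\delta'=\frac{1}{2}$ required by the embedding $H_{s',\delta'}\hookrightarrow L^\infty_2$.
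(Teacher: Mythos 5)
Your proof is correct, and its skeleton matches the paper's: the same pointwise bound $|w_1^\beta-w_2^\beta|\lesssim\beta(w_1^{\beta-1}+w_2^{\beta-1})|w_1-w_2|$ from the integral representation, the same absorption of the extra factor $(1+|x|)^4$ into a weighted sup of $w_i^{\beta-1}$, and the same verification that the resulting constraint on $\delta$ coincides with the hypothesis of Proposition \ref{prop:power}. The one point of genuine divergence is how you control $\|w_i^{\beta-1}\|_{L^\infty_2}$. You route this through the weighted Sobolev scale: bound $\|w_i^{\beta-1}\|_{H_{s',\delta'}}$ with $\delta'\geq\tfrac12$ via Lemma \ref{lem:3} (and Kateb in the non-integer case), then embed $H_{s',\delta'}\hookrightarrow L^\infty_2$. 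The paper avoids all of this by observing that the weighted sup factorizes exactly,
\begin{equation*}
\|w_i^{\beta-1}\|_{L^\infty_2}=\sup_x (1+|x|)^2 w_i^{\beta-1}(x)=\Big(\sup_x (1+|x|)^{\frac{2}{\beta-1}} w_i(x)\Big)^{\beta-1}=\|w_i\|_{L^\infty_{\frac{2}{\beta-1}}}^{\beta-1},
\end{equation*}
so that a single application of the embedding $H_{s,\delta}\hookrightarrow L^\infty_{\frac{2}{\beta-1}}$ (valid because $\frac{2}{\beta-1}\leq\delta+\tfrac32$ is the hypothesis) finishes the job, with the replacement of $\frac{2}{\beta-1}$ by $\frac{2}{[\beta]-1}$ handling the non-integer case. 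Your version invokes heavier machinery — the multiplication lemma and the Kateb power estimate — where a purely pointwise identity suffices; it is sound, and the arithmetic you note (that $\delta'\geq\tfrac12$ for $\beta-1$ factors reduces to the same $\delta\geq\frac{2}{[\beta]-1}-\frac32$) is exactly right, but the paper's observation is cheaper and worth internalizing.
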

\begin{proof}
  We first write the difference in a integral form
  \begin{equation*}
    (w_1^\beta-w_2^\beta)=\int_0^1\beta\left(tw_1+(1-t)w_2\right)^{\beta-1}
    \left(w_1-w_2\right)dt
  \end{equation*}
  Note that $0\leq w_1,w_2 $ and $1\leq \beta-1$, so by using the convexity of 
the
  function $t^{\beta-1}$ we get that
  \begin{equation}
    \label{eq:non:7}
    \begin{split}
      |w_1^\beta-w_2^\beta| \leq &
      \int_0^1\beta\left(tw_1+(1-t)w_2\right)^{\beta-1} |w_1-w_2|dt\\
      \leq & \int_0^1\beta\left(tw_1^{\beta-1}+(1-t)w_2^{\beta-1}\right) 
|w_1-w_2|dt
      \\ \leq & \frac{\beta}{2}\left(w_1^{\beta-1}+w_2^{\beta-1}\right) 
|w_1-w_2|.
    \end{split}
  \end{equation}
  As before we start considering the case $\beta\in \setN$,  then by
(\ref{eq:non:7})
  \begin{equation}
  \label{eq:difference:2}
    \begin{split}
      & \|w_1^\beta-w_2^\beta\|_{L_{\delta+2}}^2= \int
      (1+|x|)^{2(\delta+2)}|w_1^\beta-w_2^\beta|^2 dx\\ \leq &
      \frac{\beta^2}{2}\int
      (1+|x|)^{2(\delta+2)}\left(w_1^{2(\beta-1)}+w_2^{2(\beta-1)}\right)
      |w_1-w_2|^2 dx \\ = & \frac{\beta^2}{2}\int      
\frac{(1+|x|)^{2(\delta+2)}}{(1+|x|)^4}\left(\left((1+|x|)^{\frac{2}{\beta-1}}
w_1
        \right)^{2(\beta-1)}+\left((1+|x|)^{\frac{2}{\beta-1}}w_2
        \right)^{2(\beta-1)}\right) |w_1-w_2|^2 dx \\ \leq &      
\frac{\beta^2}{2}\left(\left(\|w_1\|_{L_{\frac{2}{\beta-1}}^\infty}\right)^{
          2(\beta-1)}+\left(\|w_2\|_{L_{\frac{2}{\beta-1}}^\infty}\right)^{
          2(\beta-1)}\right)\int (1+|x|)^{2\delta}|w_1-w_2|^2dx \\
      \leq&      
\frac{\beta^2}{2}\left(\left(\|w_1\|_{L_{\frac{2}{\beta-1}}^\infty}\right)^{
          2(\beta-1)}+\left(\|w_2\|_{L_{\frac{2}{\beta-1}}^\infty}\right)^{
          2(\beta-1)}\right)\|w_1-w_2\|_{L^2_{\delta}}^2.
    \end{split}
  \end{equation}
  Since $\frac{2}{\beta-1}\leq \frac{3}{2}+\delta$, we get by
  Proposition \ref{prop:emb-cont} (i) that
  \begin{equation*}
    \|w_i\|_{L_{\frac{2}{\beta-1}}^\infty}\leq C \|w_i\|_{H_{s,\delta}}, \qquad 
i=1,2.
  \end{equation*}
  In the case that $\beta\not\in\setN$, we replace
  $(1+|x|)^{\frac{2}{\beta-1}}$ by $(1+|x|)^{\frac{2}{[\beta]-1}}$ in inequality
  (\ref{eq:difference:2}). 
  Since $1\leq\frac{\beta-1}{[\beta]-1}$,
  $(1+|x|)^{\frac{4(\beta-1)}{[\beta]-1}}\leq (1+|x|)^4$, and hence we can 
proceed as in the
  case that $\beta$ is an integer.
\end{proof}

\section{Proof of the main results}
\label{sec:makin-fixp-argum}
We have explained  the main idea of the proof in section
\ref{sec:struct-proof-organ}.
We start with the construction of the map $\Phi$ which we will use for the
fixed--point theorem.
\subsection{Construction of the map $\Phi$}
\label{sec:construction-phi-1}
For a given $w(x,t)$, let
\begin{equation*}
  \widehat w = \Phi(w)
\end{equation*}
where $\Phi$ is constructed as follows.
\begin{enumerate}[A.)]
  \item \label{item:proto-euler-poisson-banach:1} The inversion of the
  Makino variable: The Makino variable
  $w=\frac{2\sqrt{K \gamma}}{\gamma-1}\rho^{\frac{\gamma-1}{2}}$ is a nonlinear
  function of the density $\rho$. 
  We need to consider $\rho= c_{K\gamma}w^{\frac{2}{\gamma-1}}$ as a function of
  $w$, where $c_{K\gamma}$ is given in (\ref{eq:non:6}). 
  However, due to the fact that
  $\Delta: H_{s,\delta}\to H_{s-2,\delta+2}$ is an isomorphism (Theorem
  \ref{thr:proto-euler-poisson-banach:1}), we need to assure that the
  source term $\rho$ belongs to $H_{s-2,\delta+2}$. 
  We show this by using the nonlinear estimates (\ref{eq:non:1}) and
  (\ref{eq:non:2}).
  \item The elliptic step:
  \label{item:iteration-scheme:2} With the $\rho$ from the last step, we
  construct, $\phi$ (resp.\ $\nabla \phi$) as a solution of the Poisson equation
  (\ref{eq:iteration-scheme:3}).
  (See section \ref{sec:elliptic-estimate}).
  \vskip 2mm
  \item The hyperbolic step: We now construct $\widehat w$. 
  We chose initial data in accordance with the assumptions made in
  Theorem \ref{thm:main:1}. 
  Then we cast the Euler equations into symmetric hyperbolic form
  (\ref{eq:eulerpoisson:1})
  and solve it with these initial data and the external source term
  $\nabla\phi$ which we constructed in the last step. 
  This results in $\widehat w$.
\end{enumerate}
For convince we write the map $\Phi$ as
\begin{equation}
  \label{eq:fixpointscheme:62}
  \Phi = \Phi_1 \circ \Phi_2 \circ \Phi_3 : w
  \underset{\mbox{inverse}}{\mapsto} \rho  \underset{\mbox{ellp}}{\mapsto}
  \nabla\phi 
  \underset{\mbox{hyp}}{\mapsto} (\widehat w, \widehat v^a).
\end{equation}

For that map we have to show that it maps a closed bounded set into
itself and that it is a contraction.
We start with the construction of appropriate sets of functions in
which our map $\Phi$ will act upon.
Denote $\beta=\frac{2}{\gamma-1}$ and let $s,\delta,\gamma$ satisfy the conditions of
Theorem \ref{thm:main:1}. 
Let us for the moment assume that $\beta\in\setN$, the case
$\beta\not \in \setN$ is very similar but we leave it out for the convenience of
the reader.
We chose $M_0$ such that the initial data satisfy:
\begin{equation}
  \label{eq:section-0006:6}
  \|(w_0,v^a_0)\|_{H_{s,\delta}}\leq M_{0}.
\end{equation}
Let  $B_h$  be a closed bounded set  given by
\begin{equation*}
  B_h=\{w \in C\left(\lbrack0,T\rbrack ; {H_{s,\delta}}\right) : 0\leq w, \ 
  w(0,x)=w_0(x), \sup_{0\leq t\leq
    T} \|w(t,\cdot)\|_{H_{s,\delta}} \leq 2M_0 \}.
\end{equation*}
 
\subsection{The map $\Phi$ is a self--map}
\label{sec:boundness-phi}

Now we are in a position to show that $\Phi$ maps $B_h$ to $B_h$.
We prove it as described in (\ref{eq:fixpointscheme:62}) step
by step.
So take $w\in B_{h}$:
\begin{enumerate}[A.)]
  \item \textbf{The inversion of the Makino variable.}
 Let $\rho=\Phi_{1}(w)$, define by 
  $\rho(t,x)=c_{K,\gamma}w^{\beta}(t,x)$. 
  Then by the power estimates (\ref{eq:non:1}) of Proposition
  \ref{prop:power} in section \ref{sec:nonlinear-estimate} we obtain
  an estimate of the form
  \begin{equation}
    \label{eq:fixpoint-scheme:47}
    \|\rho(t,\cdot)\|_{H_{s-1,\delta+2}}\leq C_n
    \left(c_{K,\gamma}\|w(t,\cdot)\|_{H_{s,\delta}}\right)^\beta.
  \end{equation}
\vskip 2mm
  \item \textbf{The Elliptic step:}
  \label{item:proto-euler-poisson-banach:3}
   Let $\Phi_2(\rho)=\nabla\Delta^{-1}\rho$.
  Since $\rho\in H_{s-1,\delta+2}$ from the previous step, $\phi$
  (resp.\ $\nabla \phi$) is constructed via the Poisson equation,
  (\ref{eq:iteration-scheme:3}), by applying Theorem
  \ref{thr:proto-euler-poisson-banach:1} that provides the solution to
  it, and by inequality (\ref{eq:Poisson:3}) we obtain
  \begin{equation*}
    \|\nabla\phi(t,\cdot)\|_{H_{s,\delta+1}}\leq C_{e} 
\|\rho(t,\cdot)\|_{H_{s-1,\delta+2}}.
  \end{equation*}
 Combining it with the previous step we obtain
  \begin{equation}
    \label{eq:fixpoint-scheme:16}
    \begin{split}
    \left\| \nabla\phi(t,\cdot) \right\|_{H_{s,\delta+1}}^{2}
        & \leq        C_{e}^2      \left\| 
\rho(t,\cdot)\right\|_{H_{s-1,\delta +2}}^{2} 
    \leq  C_{e}^2 C_{n}^2 
c_{K,\gamma}^{2\beta}\left\|w(t,\cdot)\right\|_{H_{s,\delta}}^{2\beta}
  \\ &    \leq C_{e}^2C_{n}^2c_{K,\gamma}^{2\beta}(2M_0)^{2\beta}.
      \end{split}
  \end{equation}
\vskip 2mm
  \item \textbf{The Hyperbolic step:}
 Let $\Phi_3:\nabla\phi \mapsto (\widehat w,\widehat v^a)$, where
  $(\widehat w, \widehat v^a)$ denote the solution of the following
  system
  \begin{equation}
  \label{eq:eulerpoisson:1}
    \begin{pmatrix}
      1 & 0
      \\
      0 & \delta_{ab}
    \end{pmatrix}
    \partial_t
    \begin{pmatrix}
      \widehat w
      \\
      \widehat v^b
    \end{pmatrix}
    +
    \begin{pmatrix}
      \widehat v^c & \frac{\gamma -1}{2}\delta^c_b
      \\
      \frac{\gamma -1}{2}\delta^c_a & \delta_{ab}\widehat v^c
    \end{pmatrix}
    \partial_c
    \begin{pmatrix}
      \widehat w
      \\
      \widehat v^b
    \end{pmatrix}
    =
    \begin{pmatrix}
      0\\
      -\partial_a \phi\\
    \end{pmatrix},
  \end{equation}
  with the given initial data $(w_0,v^a_0)$ which satisfy
  (\ref{eq:section-0006:6}).
  These initial data and the source term $\nabla \phi$ of the last step
  satisfy the conditions of Theorem \ref{thm:existence-quasi}. 
  Hence we obtain a solution
  $U=(\widehat w, \widehat v^a)\in C([0,T],H_{s,\delta})\cap
  C^1([0,T],H_{s-1,\delta+1})$.
  Now using Corollary \ref{cor:1} and estimate \ref{eq:energy:1} we
  obtain
  \begin{equation*}
    \|U(t)\|_{H_{s,\delta}}^2\leq e^{C_1 t}\left(M_0^2+ \int\limits_0^{t}
      \|F(\tau,\cdot)\|_{H_{s,\delta}}^2 d\tau\right),
  \end{equation*}
  where $F(t,x)=(0,\nabla \phi(t,x))$.
  Now using the fact that
  $\| \nabla \phi \|_{H_{s,\delta}}\leq \| \nabla \phi
  \|_{H_{s,\delta+1}}$,
  inequalities (\ref{eq:fixpoint-scheme:16}) and
  (\ref{eq:fixpoint-scheme:47}) we obtain
  \begin{equation*}
    \displaystyle\sup_{0\leq t\leq    T}\|\Phi(w(t))\|_{H_{s,\delta}}^2
    \leq    \sup_{0\leq t\leq T}    \|U(t)\|_{H_{s,\delta}}^2
    \leq e^{C_1 T} \left[M_0^2 + 
C_{e}^2C_{n}^2c_{K,\gamma}^{2\beta}(2M_0)^{2\beta}T\right]. 
  \end{equation*}
  So choosing $T$ sufficiently small that we obtain the following
  inequality 
  \begin{equation*}
    \displaystyle\sup_{0\leq t\leq    T}\|U(t)\|_{H_{s,\delta}}^2 \leq 4M_0^{2}.
  \end{equation*}
  From which follows that $\widehat{w}=\Phi(w)\in B_h$ and that $\Phi$ maps 
$B_h$ into $B_h$.
  During the course of this proof, we have to use the fact that
  $0\leq \widehat w$, given that $0\leq w_0$. 
  That this is, in fact, true, can be seen easily by integrating the
  continuation equation along their characteristics.
  For details, we refer to Makino
  \cite[p.~467]{makino86:_local_exist_theor_evolut_equat_gaseous_stars}.
  \hfill $\Box$
\end{enumerate}

\subsection{The map $\Phi$ is a contraction in $L^2_{\delta}$}
\label{sec:map-phi-contraction}
The proof of the contraction combines the energy estimates in the
$L^2_\delta$ spaces with the nonlinear estimate of the difference which
we obtained in subsection \ref{diff-estimates} and the inequalities of
the previous steps.   
 
Let $w_1,\ w_2\in B_h$, then
\begin{align}
  &\|\Phi(w_1(t))- \Phi(w_2(t))\|_{L_{\delta}^2}^{2}
    =
    \|\widehat w_1(t)-\widehat  w_2(t)\|_{L_{\delta}^2}^{2}\nonumber \\
  &\leq
    e^{C_2t}  \int\limits_0^t 
    \|\nabla \phi_1(\tau) -  \nabla \phi_2(\tau)\|_{L_{2}^2}^{2} d\tau
    \quad\mbox{by eq.
    (\ref{eq:lower}) of  Corollary \ref{cor:2} }\nonumber\\
  &\leq
    e^{C_2t}  \int\limits_0^t 
    \|\nabla \phi_1(\tau) -    \nabla \phi_2(\tau)\|_{H_{1,\delta+1}}^{2}d\tau
    \quad\mbox{by  (\ref{eq:norm:3}) and Proposition \ref 
    {embedding1}}\nonumber\\
  &\leq
    e^{C_2t} C_{e}^2  \int\limits_0^t 
    \|\rho_1(\tau)-\rho_2(\tau)\|_{H_{0,\delta+2}}^{2}d\tau
    \quad\mbox{by eq.
    (\ref{eq:Poisson:3})}\nonumber\\
  &\leq
    e^{C_2t} C_{e}^2c_{K,\gamma}^2  \int\limits_0^t 
    \|w_1^\beta(\tau)- w^\beta_2(\tau)\|_{L^2_{\delta+2}}^2d\tau
    \quad\mbox{by  (\ref{eq:norm:3}) and Proposition \ref 
    {embedding1}}\nonumber\\
  &\leq
    \label{eq:section-0006:5}
    e^{C_2t} C_{e}^2c^2_{K,\gamma}C_d^2 \int\limits_0^t 
    \|w_1(\tau)-
    w_2(\tau)\|_{L_{2,\delta}}^{2} d\tau
    \quad\mbox{by eq.
    (\ref{eq:section-0005b:1})}.
\end{align}
Now taking the $\sup$--norm of (\ref{eq:section-0006:5}), we obtain
\begin{equation*}
  \displaystyle\sup_{0\leq t\leq T}
  \|\Phi(w_1(t))- \Phi(w_2(t))\|_{L_{2,\delta}}^{2}
  \leq 
  e^{C_2T} T \left( C_{e}c_{K,\gamma}C_d\right)^2
  \displaystyle\sup_{0\leq t\leq T}  \|w_1(t)- w_2(t)\|_{L_{2,\delta}}^{2}.
\end{equation*}
Now taking $T$ sufficiently small so that we have
$e^{C_2T} T \left( C_{e}c_{K,\gamma}C_d\right)^2<1$, then $\Phi$ is
indeed a contracting map.

\begin{proof}[Proof of Theorem  \ref{thm:main:1}]
We have shown that $\Phi$ maps $B_h \subset H_{s,\delta}$ to $B_h$
and that it is a contraction with respect to the $L_{\delta}^2$--norm. 
By Theorem \ref{thr:fixpoint-banach-norms:1} the map $\Phi$ has a unique
fixed--point $w^{\star}$ in $H_{s,\delta}$. 
However, in order not to have a clumsy notation we drop the ${}^{*}$
and $\ {}\widehat {}\ $ in the following.
The vector valued function $U=(w, v^a)$ is the solution to
the Euler--Poisson-Makino system
(\ref{eq:iteration-scheme:1b})-(\ref{eq:iteration-scheme:3b}) and it
belongs to $H_{s,\delta}$. 
Since $U$ solves the symmetric hyperbolic
system (\ref{eq:neu-existence:21}), we conclude by Theorem
\ref{thm:existence-quasi} that
\begin{equation*}
  U=(w,  v^a)\in C([0,T],H_{s,\delta})\cap 
  C^1([0,T],H_{s-1,\delta+1}).
\end{equation*}
At the beginning of the proof we set $\beta\in\setN$, now in the case
of $\beta\notin \setN$ we would have used the estimate
(\ref{eq:non:2}) instead (\ref{eq:non:1}) of the same proposition. 
However the rest of the proof would not have been altered.
This completes the proof of Theorem \ref{thm:main:1}.
\end{proof}

We turn now to the proof of Corollary \ref{cor:section-0003:2}.

\begin{proof}[Proof of Corollary \ref{cor:section-0003:2}]
  Let $(w,v^a)$ be the solution to the Euler--Poisson--Makino system
  (\ref{eq:iteration-scheme:1b})--(\ref{eq:iteration-scheme:3b}), then
  $\rho= c_{K,\gamma}w^{\frac{2}{\gamma-1}}$ is the density. 
  By Proposition \ref {prop:power:2} $\rho \in H_{s-1,\delta'}$ for some
  $\delta'>\frac{3}{2}$. 
  Hence by Propositions \ref{monoton} and \ref{prop: L-1},
  \begin{equation*}
    \|\rho\|_{L^1}\leq C\|\rho\|_{L^2_{\delta'}}\leq 
    C\|\rho\|_{H_{s-1,\delta'}}.
  \end{equation*}
  We turn now to the energy functional (\ref{eq:energy-functional}). 
  Note that $(w,v^a)\in L^\infty$ by the Sobolev embedding in the
  weighted spaces, Proposition \ref{prop:emb-cont}, and that
  $\rho^\gamma=c_{K,\gamma}^{\gamma+1} w^2\rho$. 
  Hence, the first two terms of (\ref{eq:energy-functional}) are
  finite since $\rho\in L^1$. 
  Set
  \begin{equation*}
    V(t,x)=\int \frac{\rho(t,y)}{|x-y|}dy=\int_{\{|y-x|\leq 1\}} 
    \frac{\rho(t,y)}{|x-y|}dy+\int_{\{|y-x|> 1\}} 
    \frac{\rho(t,y)}{|x-y|}dy.
  \end{equation*}
  Then for $t\in [0,T]$,
  \begin{equation*}
    |V(t,x)|\leq 2\pi 
    \|\rho(t,\cdot)\|_{L^\infty}+\|\rho(t,\cdot)\|_{L^1}.
  \end{equation*}
  Thus $V(t,\cdot)\in L^\infty$, which implies that
  \begin{equation*}
    \iint\frac{\rho(t,x)\rho(t,y)}{|x-y|}dx dy\leq \int 
    V(t,x)\rho(t,x)dx\leq 
    \|V(t,\cdot)\|_{L^\infty} \|\rho(t,\cdot)\|_{L^1}.
  \end{equation*}
 
\end{proof}

\begin{appendices}

\section{The modified Banach fixed--point theorem}
\label{sec:proof-modif-banach}

\begin{thm}
  \label{thr:fixpoint-banach-norms:1}
  Let $X$ and $Y$ be two Hilbert spaces such that $ X\subset Y$,
  $\left\| \, \cdot \, \right\|_X$ and $\left\|\,\cdot\, \right\|_Y$ denote their
  norms, $\mathit{K}\subset X$ be a closed bounded set and let
  $\Phi:X\to X$ be a map such that
  \begin{enumerate}
    \item $\Phi$ maps $\mathit{K}$ into $\mathit{K}$, that is,
    $\Phi(x)\in \mathit{K}$ for all $x\in \mathit{K}$;
    \item $\Phi$ is a contraction map in $Y$, that is, there exists a
    constant $0<\Lambda<1$ such that
    \begin{equation*}
      \|\Phi(x_1)-\Phi(x_2)\|_{Y} \leq \Lambda \|x_1-x_2\|_{Y} \qquad \text{for all  }\       x_1,x_2\in \mathit{K}.
    \end{equation*}
  \end{enumerate}
  Then $\Phi$ admits a unique fixed--point $x^{\star}\in \mathit{K}\cap X$ such 
that
  \begin{math}
    \Phi(x^{\star}) = x^{\star}.
  \end{math}
\end{thm}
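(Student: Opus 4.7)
The plan is to adapt the classical Banach fixed-point argument, running the iteration in the weaker $Y$-norm (where the contraction holds) and recovering the $X$-regularity by a weak compactness argument. Throughout I would assume, as is implicit in the intended application (with $X=H_{s,\delta}$, $Y=L^2_\delta$ and $K=B_h$), that the inclusion $X\hookrightarrow Y$ is continuous and that $K$ is convex.

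First I would pick any $x_0\in K$ and define the Picard iterates $x_{n+1}=\Phi(x_n)$. Since $\Phi$ maps $K$ into $K$, the whole orbit remains in $K$ and is therefore bounded in the stronger norm $\|\cdot\|_X$. The contraction in $Y$ gives $\|x_{n+1}-x_n\|_Y\leq \Lambda^n\|x_1-x_0\|_Y$ in the usual way, so $\{x_n\}$ is Cauchy in the complete space $Y$ and converges to some $x^*\in Y$. To place $x^*$ in $K$, I would exploit the reflexivity of $X$ (being a Hilbert space): the $X$-bounded sequence $\{x_n\}$ admits a subsequence $\{x_{n_k}\}$ converging weakly in $X$ to some $\tilde x\in X$, and the continuous embedding $X\hookrightarrow Y$ turns this into weak $Y$-convergence, which combined with the strong $Y$-convergence $x_{n_k}\to x^*$ forces $\tilde x=x^*$ and in particular $x^*\in X$. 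Since $K$ is convex and norm-closed in $X$, Mazur's theorem says $K$ is weakly closed in $X$, so $x^*\in K$.

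To finish, I would verify the fixed-point property through the contraction:
\begin{equation*}
\|x_{n+1}-\Phi(x^*)\|_Y=\|\Phi(x_n)-\Phi(x^*)\|_Y\leq \Lambda\|x_n-x^*\|_Y\longrightarrow 0,
\end{equation*}
while also $x_{n+1}\to x^*$ in $Y$, so that $\Phi(x^*)=x^*$. Uniqueness in $K$ is immediate, since two fixed points $x^*,y^*\in K$ would satisfy $\|x^*-y^*\|_Y\leq \Lambda\|x^*-y^*\|_Y$ with $\Lambda<1$. The main obstacle is precisely the identification $x^*\in K$: the strong $Y$-limit of a sequence in $K$ need not lie in $K$ without some extra structure, which is why the argument really relies on reflexivity of $X$ together with weak closedness of $K$ (guaranteed in practice by convexity plus norm-closedness). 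Once those ingredients are in place, everything else is routine.
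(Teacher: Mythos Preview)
Your argument is correct and follows essentially the same route as the paper's proof: Picard iteration in $K$, Cauchy in $Y$ via the contraction, weak compactness in the Hilbert space $X$, and identification of the weak-$X$ limit with the strong-$Y$ limit. You are in fact more careful than the paper, since you explicitly flag the need for convexity of $K$ (so that norm-closedness gives weak closedness via Mazur) and continuity of the embedding $X\hookrightarrow Y$---both of which the paper uses tacitly but does not list as hypotheses.
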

Although this theorem seems to be part of the mathematical folklore,
we failed to find a proof of it and that is why, and for the
convenience of the reader, we present the proof in the following.
\begin{proof}
  Let $x_0 \in \mathit{K}$ and define a sequence $\{x_n\}$ by
  \begin{math}
    x_n = \Phi(x_{n-1}). 
  \end{math}
  It is straightforward to show
  \begin{math}
    \|x_{n+1}- x_n\|_Y \leq \Lambda^n \|x_1- x_0\|_{Y}.
  \end{math}
  Hence $\{x_n\}$ is a Cauchy sequence in $Y$, and therefore it
  converges strongly to a limit $x^{\star}$ in $Y$.
  Moreover, $x^{\star}$ is the only fixed--point.
  
  Since $\{x_n\}\subset \mathit{K}$, it is a bounded sequence in $X$, and
  since $X$ is a Hilbert space, the Banach--Alaoglu theorem implies
  that there is a subsequence $\{x_{n_k}\}$, which converges weakly to
  $\hat{x}\in \mathit{K}\cap X$. 
  It remains to show that $\{x_{n_{k}}\}$ converges weakly in $Y$. 
  This implies that $\hat{x}=x^{\star}$ and hence $x^{\star}$ belongs to
  $ X$.
  So let $X^{\prime}$ and $Y^{\prime}$ denote the dual spaces. 
  Weak convergence means that
  \begin{equation*}
    f\left( x_{n_k}\right) \to f\left(\hat{x}\right) \qquad \text{for all } \ 
    f\in 
    X^{\prime}.
  \end{equation*}
  Since $X\subset Y$, $Y^{\prime}\subset X^{\prime}$, and hence
  \begin{equation*}
    f\left( x_{n_k}\right) \to f\left(\hat{x}\right) \qquad \text{for all } \ 
    f\in 
    Y^{\prime}.
  \end{equation*}
  Thus $\{x_{n_k}\}$ converges weakly in $Y$ and that completes the
  proof.
\end{proof}
\end{appendices}

\bibliographystyle{amsalpha-url}

\bibliography{jde}        

\end{document}